\newcommand{\DD}{\textnormal{D}}
\theoremstyle{plain}
\newtheorem{Theo}{Theorem}[section]
\newtheorem{lem}[Theo]{Lemma}
\newtheorem{prop}[Theo]{Proposition}
\theoremstyle{plain} \theoremstyle{definition}
\newtheorem{defi}[Theo]{Definition}
\theoremstyle{remark}
\newtheorem{Rema}[Theo]{Remark}
\newtheorem*{rema*}{Remark}
\newcommand{\ZZ}{\mathbb{Z}}
\newcommand{\NN}{\mathbb{N}}
\newcommand{\RR}{\mathbb{R}}
\numberwithin{equation}{section}
\date{}
\begin{document}

\title[Global well-posedness for Boussinesq system ]
{On the global existence for the axisymmetric Euler-Boussinesq system in critical Besov spaces}
\author[Samira Sulaiman]{Samira Sulaiman}
\address{IRMAR, Universit\'e de Rennes 1\\ Campus de
Beaulieu\\ 35~042 Rennes cedex\\ France}
\email{samira.sulaiman@univ-rennes1.fr}

\begin{abstract}
This paper is devoted to the global existence and uniqueness results for the three-dimensional Boussinesq system with axisymmetric initial data $v^{0}\in B^{\frac{5}{2}}_{2,1}(\RR^3)$ and $\rho^{0}\in B^{\frac{1}{2}}_{2,1}(\RR^3)\cap L^{p}(\RR^3)$ with $p>6$. This system couples the incompressible Euler equations with a transport-diffusion equation governing the density. In this case the Beale-Kato-Majda criterion (see \cite{bkm84}) is not known to be valid and to circumvent this difficulty we use in a crucial way some geometric properties of the vorticity. 
\end{abstract}

\maketitle
\tableofcontents

\section{Introduction and main result}\label{A} The present paper is devoted to the mathematical study of the so-called Boussinesq system. This system couples the incompressible Euler equations and a transport-diffusion equation governing the density. It reads as follows 
\begin{equation}\label{a} 
\left\{ \begin{array}{ll} 
\partial_{t}v+v\cdot\nabla v+\nabla p=\rho\,e_{z},\qquad (t,x)\in\RR_+\times\RR^3\\ 
\partial_{t}\rho+v\cdot\nabla\rho-\kappa\Delta\rho=0\\
\textnormal{div}\,v=0\\
v_{| t=0}=v^{0}, \quad \rho_{| t=0}=\rho^{0}.  
\end{array} \right.
\end{equation} 
Here, the velocity $v=(v^1,v^2,v^3)$ is three-component vector field with zero divergence. The scalar function $\rho$ denotes the density which is diffused by the flow and acts on the first equation of \eqref{a} in the vertical direction $e_{z}=(0,0,1).$ The pressure $p$ is a scalar function which given by the equation
$$\Delta P=-\textnormal{div}(v\cdot\nabla v)+\partial_{z}\rho.$$ 
The coefficient $\kappa\ge0$ is a heat conductivity and we will take in the sequel $\kappa=1.$ The term $v\cdot\nabla$ is defined by $$v\cdot\nabla=\displaystyle\sum^{3}_{i=1}v^{i}\partial_{i}.$$
The system \eqref{a} is a special case of a class of generalized Boussinesq system introduced in \cite{YB08}. Note that if the initial density $\rho^{0}$ is identically zero (or constant), then the system \eqref{a} reduces to the classical incompressible Euler equations :
\begin{equation}\label{b} 
\left\{ \begin{array}{ll} 
\partial_{t}v+v\cdot\nabla v+\nabla p=0\\ 
\textnormal{div}\,v=0\\
v_{| t=0}=v^{0}.  
\end{array} \right.
\end{equation} 
In three dimension, the breakdown of smooth solutions to the Boussinesq system \eqref{a} and Euler equations \eqref{b} remains an open problem. For the case of axisymmetric solutions without swirl (see definition below), global existence and uniqueness for \eqref{a} and \eqref{b} has been established under various assumptions on the initial data.
\begin{defi}
We say that a vector field $v$ is axisymmetric (without swirl) if and only if it has the form :
\begin{equation*}
v(t,x)=v^{r}(t,r,z) e_{r}+v^{z}(t,r,z) e_{z},
\end{equation*}
where $z=x_3\;,\; x=(x_1, x_2, z)\;,\;r=(x_{1}^{2}+x_{2}^{2})^{\frac{1}{2}}\;\;\textnormal{and}\;\;(e_r, e_\theta, e_z)$ is the cylindrical basis of $\RR^3$ given by :
\begin{equation*}
e_r=(\frac{x_{1}}{r}, \frac{x_{2}}{r},0)\quad e_\theta=(-\frac{x_{2}}{r}, \frac{x_{1}}{r},0)\quad\textnormal{and}\;\;\;e_z=(0,0,1)
\end{equation*}
and the components $v^r$ and $v^z$ do not depend on the angular variable $\theta.$
\end{defi}
Recall now that in cylindrical coordinates, we have:
\begin{equation*}\label{qz5}
v\cdot\nabla=v^{r}\partial_{r}+\frac{1}{r}v^{\theta}\partial_{\theta}+v^{z}\partial_{z}
\end{equation*}
and
$$\textnormal{div}\,v= \partial_{r}v^{r}+\frac{v^r}{r}+\partial_{z}v^{z}.$$
Thus for an axisymmetric vector field (without swirl) i.e $v^\theta=0,$ we get,
\begin{equation}\label{f}
v\cdot\nabla=v^{r}\partial_{r}+v^{z}\partial_{z}.
\end{equation}
In space dimension three, the vorticity $\omega$ of $v$ is defined as the vector $\omega=curl v=\nabla\times v$ and has the form :
\begin{equation}\label{c}
\omega:=\omega^{\theta}e_{\theta}\quad\textnormal{with}\quad \omega^{\theta}=\partial_{z} v^{r}-\partial_{r} v^{z}.
\end{equation}
Moreover, from \eqref{a} the vorticity $\omega$ of $v$ satisfies the equation,
$$\partial_{t}\omega+(v\cdot\nabla)\omega-(\omega\cdot\nabla)v=curl(\rho e_{z}).$$
Similarly, for the system \eqref{b}, the vorticity satisfies \eqref{c} and
\begin{equation}\label{d}
\partial_{t}\omega+(v\cdot\nabla)\omega-(\omega\cdot\nabla)v=0.
\end{equation}
It is well-known, according to the Beale-Kato-Majda criterion \cite{bkm84} that the control of the vorticity in $L^\infty$ is sufficient to get global well-posedness results for smooth initial data $v^{0}\in H^{s}, s>\frac{5}{2}$. The main difficulty to bound the vorticity is the lack of information about the influence of the vortex-stretching term $\omega\cdot\nabla v$ on the motion of the fluid.\\
In the case of axisymmetric flows without swirl, we have a good behavior of the stretching term. It takes the form  
$$\omega\cdot\nabla v=\frac{v^r}{r}\omega.$$
and thus the vorticity equation becomes 
\begin{equation}\label{g}
\partial_{t}\omega+(v\cdot\nabla)\omega=\frac {v^r}{r}\omega.
\end{equation}
Letting $\beta:=\frac{\omega}{r}$ in \eqref{g}, the quantity $\beta$ then solves the equation
$$\partial_{t}\beta+(v\cdot\nabla)\beta=0.$$
From the incompressibility of the velocity $v,$ we get easily for $p \in[1,\infty]$
$$\Vert\beta(t)\Vert_{L^p}\le \Vert\beta^0\Vert_{L^p}.$$ 
In \cite{ui68}, Ukhovskii and Iudovich took advantage of these conservation laws to prove global existence for axisymmetric initial data with finite energy for the Euler system \eqref{b} and satisfying in addition $\omega^{0}\in L^{2}\cap L^\infty$ and $\frac{\omega^0}{r} \in L^{2}\cap L^\infty.$ This result was improved by Shirota and Yanagisawa \cite{sy94} in $H^{s}$ with $s>\frac{5}{2}.$ Their proof is based on the boundness of the quantity $\Vert\frac{v^r}{r}\Vert_{L^\infty}$ by using some Biot-Savart law. \\
In \cite{sr94} similar results are given in different function spaces. Danchin \cite{d07} has weakened the Ukhovskii and Iudovich condition for initial data $\omega^{0}\in L^{3,1}\cap L^\infty$ and $\frac{\omega^0}{r} \in L^{3,1},$ where $L^{3,1}$ denotes a Lorentz space (see definition \ref{def2} in section \ref{B}). \\
In \cite{ahs08} Abidi, Hmidi and Keraani proved that if $v^{0}$ is an axisymmetric vector field in the critical Besov space $B^{\frac{3}{p}+1}_{p,1}(\RR^3)$ with $p\in[1,\infty]$ and if $\frac{\omega^{0}}{r}\in L^{3,1}(\RR^3)$, then there exists a unique global solution $v$ to \eqref{b} in $\mathcal{C}(\RR_+;B_{p,1}^{\frac{3}{p}+1}(\RR^3)).$ In this context of critical regularities, we do not know whether the B-K-M criterion is applicable. To avoid this difficulty, the authors obtained first the $L^\infty$ bound of the vorticity for every time by using a kind of Biot-Savart law. Second, they established a new estimate for the vorticity in the Besov space $B_{\infty,1}^{0}$ which is based on the analysis of the geometric structure of the vorticity and paraproduct calculus. We point out that in dimension 2, the problem of global well-posedness in critical Besov space $B_{p,1}^{\frac{2}{p}+1}$ was solved by Vishik \cite{vis98}. The crucial ingredient of Vishik's proof is a logarithmic estimate in Besov space $B_{\infty,1}^{0}$ for the composition $f\circ\psi,$ where $f\in B_{\infty,1}^{0}$ and $\psi$ is the flow of the velocity $v$ which preserves the Lebesgue measure.
As we have already seen that the system \eqref{a} is a perturbation  of Euler system \eqref{b} then it is legitimate to extend the global well-posedness results for the system \eqref{a}. Let us focus on the difficulties for \eqref{a} when we try to get some a priori estimates for the vorticity.
Recall that the vorticity of $v$ for \eqref{a} satisfies the equation,
$$\partial_{t}\omega+v\cdot\nabla\omega=\frac{v^{r}}{r}\omega+curl(\rho e_{z}),$$
with simple calculations, we get 
\begin{equation}\label{aa}
curl(\rho e_z)=\nabla\times(\rho e_{z})=
\begin{pmatrix}
\partial_2 \rho\\
-\partial_1 \rho\\
0    
\end{pmatrix} =-(\partial_r \rho)e_\theta.
\end{equation}
Therefore the vorticity obeys the equation
$$\partial_{t}\omega+v\cdot\nabla\omega=\frac {v^r}{r}\omega-(\partial_{r}\rho)e_{\theta}$$
and the equation of the scalar component of the vorticity is given by
$$\partial_{t}\omega_\theta+(v\cdot\nabla)\omega_\theta=\frac{v^r}{r}\omega_{\theta}-\partial_{r}\rho.$$
It follows that the evolution of the quantity $\frac{\omega_{\theta}}{r}$ is governed by the equation
\begin{equation}\label{h}
(\partial_{t}+v\cdot\nabla)\frac{\omega_{\theta}}{r}=\frac{-\partial_{r}\rho}{r}.
\end{equation}
The main difficulty is to find some a priori estimates on the density $\rho$ to control the right-hand side of \eqref{h}. The idea is that the singularity $\frac{1}{r}$ on the axis $r=0$ is a derivative and that the term $\frac{\partial_{r}\rho}{r}$ can be thought of as a Laplacian of the density $\rho.$ Thus the authors in \cite{tf010} try to use some smoothing effects to control this term $\frac{\partial_{r}\rho}{r}.$ We refer to Proposition 4.4 in \cite{tf010}, since they introduced a function $\Gamma:=\frac{\omega_\theta}{r}+\frac{\partial_r}{r}\Delta^{-1}\rho$ and studied the coupling system $(\Gamma, \rho)$ to find an estimate for the $L^\infty$ norm of $\frac{v^r}{r}$. They used this to prove existence and uniqueness of a solution to the Boussinesq system \eqref{a} with axisymmetric initial data $v^{0}\in H^{s}(\RR^3)$, $\frac{5}{2} < s <3$ and $\rho^{0}\in H^{s-2}(\RR^3)\cap L^{m}(\RR^3)$, $6<m$ with $\vert x_{h}\vert^{2}\rho^{0}\in L^{2}$, $x_{h}=(x_1, x_2).$ 
The goal of this paper is to improve these results by weakening the initial reqularities in order to allow critical Besov spaces. Our main result isthe following :
\begin{Theo}\label{Theo1}
Let $v^0 \in B^{\frac{5}{2}}_{2,1}$ be an axisymmetric vector field with zero divergence without swirl and $\rho^0$ be an axisymmetric function such that $\rho^{0}\in B^{\frac{1}{2}}_{2,1}\cap L^p$ with $p>6$ and such that $\vert x_{h}\vert^{2}\rho^{0}\in L^{2}.$ Then there exists a unique global solution $(v,\rho)$ for the system \eqref{a} such that
\begin{equation*}
v \in\mathcal{C}(\RR_+;B^{\frac{5}{2}}_{2,1}),\;\;\rho\in \mathcal{C}(\RR_+;B^{\frac{1}{2}}_{2,1}\cap L^p)\cap L_{loc}^1\big(\RR_+; Lip \big)\;\;\textnormal{and}\;\;\vert x_{h}\vert^{2}\rho\in\mathcal{C}(\RR_+; L^2).
\end{equation*}
\end{Theo}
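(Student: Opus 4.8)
The plan is to establish global well-posedness through the standard three-step scheme: a priori estimates, construction of approximate solutions, and passage to the limit with uniqueness. The heart of the matter lies entirely in the a priori estimates, since the critical Besov regularity $B^{5/2}_{2,1}$ prevents direct use of the Beale-Kato-Majda criterion. Following the structure suggested in the introduction, the global-in-time control will flow from taming the quantity $\Vert v^r/r\Vert_{L^\infty_t L^\infty}$, which governs the stretching term in \eqref{g} and hence the growth of the vorticity.

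\smallskip

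The key steps, in order, are as follows. First I would record the elementary conserved quantities: the $L^p$ bounds on the density from the transport-diffusion equation (using that $v$ is divergence-free, so the maximum principle and $L^p$ estimates hold), together with the energy estimate for $v$. Second, and this is the crucial coupling, I would introduce the auxiliary function $\Gamma := \frac{\omega_\theta}{r} + \frac{\partial_r}{r}\Delta^{-1}\rho$ exactly as in \cite{tf010}; differentiating \eqref{h} and using that $\frac{\partial_r}{r}\Delta^{-1}$ commutes suitably with the transport-diffusion structure, one finds that $\Gamma$ satisfies a transport equation with a controllable right-hand side, so that the singular term $\frac{\partial_r \rho}{r}$ is absorbed rather than estimated naively. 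The weighted bound $\vert x_h\vert^2 \rho \in L^2$ enters precisely here, to control the low-frequency behaviour of $\frac{\partial_r}{r}\Delta^{-1}\rho$ near the axis and at infinity. Third, from the $L^{3,1}$-type control of $\frac{\omega_\theta}{r}$ obtained this way, I would deduce via a Biot-Savart estimate that $\Vert v^r/r\Vert_{L^\infty}$ is bounded, and then feed this back into \eqref{g} to get the $L^\infty$ bound on the vorticity for all time.

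\smallskip

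Fourth, to upgrade from an $L^\infty$ vorticity bound to the full critical Besov estimate, I would follow the Abidi--Hmidi--Keraani strategy \cite{ahs08}: establish a $B^0_{\infty,1}$ estimate for $\omega$ by exploiting the geometric structure $\omega = \omega^\theta e_\theta$ together with paraproduct calculus, thereby circumventing the failure of B-K-M at this regularity. Combined with the smoothing effect of the heat semigroup acting on $\rho$, which furnishes the $L^1_{loc}(\RR_+;\mathrm{Lip})$ regularity, a Gronwall argument then propagates the $B^{5/2}_{2,1}$ norm of $v$ and the $B^{1/2}_{2,1}\cap L^p$ norm of $\rho$ globally. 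Finally, uniqueness and the construction of approximate solutions (via a Friedrichs or mollified scheme preserving the axisymmetry) follow by now-standard arguments once the a priori estimates are in hand.

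\smallskip

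The main obstacle, I expect, is the second step: controlling the coupling between $\Gamma$ and $\rho$ at critical regularity. At the level $\rho^0 \in B^{1/2}_{2,1}$ one has essentially no room to spare, and the term $\frac{\partial_r \rho}{r}$ sits right at the borderline of being a second derivative of $\rho$. The diffusion in the density equation must be used sharply — one cannot afford any logarithmic loss — and reconciling the weighted $L^2$ information on $\rho$ with the Besov-space machinery near the axis $r=0$ is where the delicate work lies.
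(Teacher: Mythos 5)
Your plan follows essentially the same route as the paper: energy and moment estimates, the Hmidi--Rousset coupling function $\Gamma$ to bound $\Vert v^r/r\Vert_{L^\infty}$ (which the paper simply quotes from \cite{tf010}), an $L^\infty$ bound on $\omega$ via the smoothing effect for the transport--diffusion equation, the Abidi--Hmidi--Keraani decomposition of the vorticity adapted to carry the forcing $\textnormal{curl}(\Delta_j\rho\, e_z)$ so as to reach $B^0_{\infty,1}$, and then propagation of the critical norms, uniqueness and a mollification scheme. The one point to correct is your assessment of where the difficulty lies: the $\Gamma$--$\rho$ coupling requires only $\rho^0\in L^2\cap L^p$ and $\vert x_h\vert^2\rho^0\in L^2$ (no Besov regularity enters there), and the genuinely delicate step is instead the adapted decomposition $\omega=\sum_j\widetilde{\omega}_{j}$ with the density forcing and its frequency-localized estimates, i.e. Propositions \ref{prop a4}--\ref{prop a6}.
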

The definition of Besov spaces is recalled in the next section.
\begin{Rema}\label{bb}
Since we have for every $1\le p\le 2$ the Besov embedding $B^{\frac{3}{p}+1}_{p,1}\hookrightarrow B^{\frac{5}{2}}_{2,1},$ then the above result remain true if we replays $B^{\frac{5}{2}}_{2,1}$ by $B^{\frac{3}{p}+1}_{p,1}.$ For $p>2$ the method does not work because we need some energy estimates for the velocity.
\end{Rema}
\begin{Rema}\label{cc}
Our proof gives an integrability in time of the density $\rho.$ More precisely we have $\rho\in L^{1}_{loc}(\RR_+; B^{\frac{5}{2}}_{2,1}(\RR^3))$ (see Proposition \ref{prop a7} ).
\end{Rema}
The proof relies essentially on two crucial estimates. The first one is a global a priori estimates of the vorticity in $L^\infty$ space which is based on Biot-Savart law with some estimates established in \cite{tf010} (see Proposition \ref{prop a3}). Nevertheless, this information is not sufficient to propagate the initial regularities because the Beale-Kato-Majda criterion \cite{bkm84} is not know to be valid. The significant quantity that one should estimate is $\Vert\omega\Vert_{B_{\infty,1}^{0}}.$ To reach our goal we use an approach developed in\cite{ahs08} (see Proposition \ref{prop a4}) witch is the hard part of the proof in the paper where the axisymmetric geometry plays a crucial role. This allows to bound for every time the Lipschitz norm of the velocity and then to propagate the regularities.\\ 
The paper is structured as follows : In Section \ref{B}, we fix some notations, we recall some basic tools from Littlewood-Paley theory and we will introduce some function spaces. We also state a few useful estimates for a transport-equation that we will use later. In Section \ref{C}, we study some geometric properties of any solution to a vorticity equation model. The proof of Theorem 1.2 is done in a several steps in section \ref{D}. 

\section{Preliminaries}\label{B}
In this section, we introduce some notations and definitions of Besov spaces. We give also some results about Lorentz space and recall some well-known results about the Littlewood-Paley decomposition and transport-diffusion equation used later. Let us begin with notations.
\subsection{Notation} We will use the following notations :\\
$\bullet$ For any positive $G$ and $H$, the notation  $G\lesssim H$ means that there exists a positive constant $C$ independent of $G$ and $H$ and such that $G\leqslant CH$.\\
$\bullet$ For any tempered distribution $g,$ both $\widehat{g}$ and $\mathcal{F}(g)$ denote the Fourier transform of $g$ with
$$\widehat{g}(\xi)=\int_{\RR^{3}} g(x) e^{-ix\xi} dx.$$
$\bullet$ For any pair of operator C and $D$ on some Banach space $\mathcal{A}$, the commutator $[C,D]$ is defined by $CD-DC.$\\
$\bullet$ The space $L^p$, $1\le p\le \infty$ stands for the usual Lebesgue space and then for any Banach space $Z$ with norm $\Vert\cdot\Vert_Z$ and function $h(t,x)$ such that for every $t,$ $h(t,x)\in Z$, we shall use the notation 
$$\Vert h \Vert_{L_T^p Z}=(\displaystyle\int_0^T\Vert h(\tau) \Vert_Z^p d\tau)^\frac{1}{p},\;\forall\;\; T>0.$$\\
$\bullet$ We denote by $\dot{W}^{1,p}$ with $1\le p\le \infty$ the space of distribution $f$ such that $\nabla f \in L^p.$\\
$\bullet$ We will use also,
$$V(t):=\displaystyle\int_0^t\Vert v(\tau)\Vert_{B^{1}_{\infty,1}}d\tau.$$
$\bullet$ We will introduce the following notation : we denote by
$$\Phi_{l}(t)=C_{0}\underbrace{\exp(...\exp}_{l-times}(C_{0} t^{\frac{19}{6}})...),$$ where $C_{0}$ depends on the initial data and its value may vary from line to line up to some absolute constants. We will make an intensive use of the following trivial facts
\begin{equation*}
\int^{t}_{0}\Phi_{l}(\tau) d\tau \le \Phi_{l}(t)\qquad\textnormal{and}\qquad \exp(\int^{t}_{0}\Phi_{l}(\tau) d\tau)\le \Phi_{l+1}(t).
\end{equation*}
\subsection{Littlewood-Paley decomposition and Besov space}
To introduce Besov spaces which are generalization of Sobolev spaces we need to recall the dyadic decomposition of the whole space (see Chemin \cite{che98}).
\begin{prop}\label{prop1} There exists two nonnegative radial functions $\chi\in\mathscr{D}(\mathbb{R}^{3})$ and $\varphi\in\mathscr{D}(\mathbb{R}^{3}\backslash\{0\})$ such that, 
$$\chi(\xi)+\displaystyle \sum_{j\ge 0}\varphi(2^{-j}\xi)=1, \quad\forall \xi\in\mathbb{R}^{3},$$
$$\displaystyle \sum_{j\in\mathbb{Z}}\varphi(2^{-j}\xi)=1, \quad\forall \xi\in\mathbb{R}^{3}\backslash\{0\},$$
$$\vert p-j\vert\ge 2\Rightarrow\mbox{supp }{\varphi}(2^{-p}\cdot)\cap\mbox{supp }{\varphi}(2^{-j}\cdot)=\varnothing,$$
$$j\ge 1\Rightarrow \mbox{supp }{\chi}\cap\mbox{supp }{\varphi}(2^{-j}\cdot)=\varnothing.$$
\end{prop}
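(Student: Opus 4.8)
The plan is to build the whole family from a single radial bump function and then read off every stated property by a telescoping argument. First I would fix a smooth radial function $\chi\in\mathscr{D}(\RR^3)$ with $0\le\chi\le1$, equal to $1$ on the ball $\{|\xi|\le 3/4\}$ and supported in $\{|\xi|\le 4/3\}$; such a function exists by mollifying the indicator of a ball. I would then \emph{define} $\varphi$ by
$$\varphi(\xi):=\chi(\xi/2)-\chi(\xi),$$
so that $\varphi$ is automatically smooth and compactly supported. Since $\chi(\xi/2)$ and $\chi(\xi)$ both equal $1$ on $\{|\xi|\le 3/4\}$ and both vanish on $\{|\xi|\ge 8/3\}$, the function $\varphi$ is supported in the annulus $\{3/4\le|\xi|\le 8/3\}$; in particular it vanishes near the origin, so $\varphi\in\mathscr{D}(\RR^3\backslash\{0\})$ as required.

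Next I would establish the two resolution-of-the-identity formulas by telescoping. Writing $\varphi(2^{-j}\xi)=\chi(2^{-j-1}\xi)-\chi(2^{-j}\xi)$, the partial sum $\chi(\xi)+\sum_{j=0}^{N}\varphi(2^{-j}\xi)$ collapses to $\chi(2^{-N-1}\xi)$, which tends to $\chi(0)=1$ as $N\to\infty$ because $\chi$ is continuous and equals $1$ near the origin; this yields the first identity for every $\xi\in\RR^3$. For the second identity I would sum over $-M\le j\le N$, collapsing the sum to $\chi(2^{-N-1}\xi)-\chi(2^{M}\xi)$; fixing $\xi\ne 0$ and letting $N,M\to\infty$, the first term tends to $1$ and the second to $0$, since $|2^{M}\xi|\to\infty$ eventually leaves the support of $\chi$. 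This proves $\sum_{j\in\ZZ}\varphi(2^{-j}\xi)=1$ on $\RR^3\backslash\{0\}$.

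Finally I would verify the two support conditions by comparing radii. The support of $\varphi(2^{-p}\cdot)$ is the annulus $\{2^{p}\cdot\tfrac34\le|\xi|\le 2^{p}\cdot\tfrac83\}$. If $p\ge j+2$, then the inner radius of the $p$-th annulus satisfies $2^{p}\cdot\tfrac34\ge 4\cdot 2^{j}\cdot\tfrac34=3\cdot 2^{j}>\tfrac83\cdot 2^{j}$, which is the outer radius of the $j$-th annulus, so the two supports are disjoint; this gives the third property. For the last one, $\chi$ is supported in $\{|\xi|\le\tfrac43\}$, while for $j\ge 1$ the support of $\varphi(2^{-j}\cdot)$ lies in $\{|\xi|\ge 2^{j}\cdot\tfrac34\ge\tfrac32\}$, and since $\tfrac32>\tfrac43$ these sets do not meet.

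Since everything reduces to these elementary computations, there is no genuine obstacle. The only point requiring care is the choice of the numerical thresholds $3/4$ and $4/3$ in the definition of $\chi$: they must be tuned so that dilation by $2$ produces annuli that overlap enough to make the telescoping exact, yet are separated enough to keep the $|p-j|\ge 2$ disjointness in the support condition.
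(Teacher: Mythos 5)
Your construction is the standard one (the paper gives no proof of this proposition and simply cites Chemin's book, where essentially this argument appears), and the telescoping identities and the support computations are all correct as you present them. There is, however, one property in the statement that you never verify: the proposition asserts that $\varphi$ is \emph{nonnegative}, and this does not follow from the properties of $\chi$ you list. The difference $\varphi(\xi)=\chi(\xi/2)-\chi(\xi)$ of two functions with values in $[0,1]$ can perfectly well take negative values; what you actually need is that $\chi$ be radially nonincreasing, i.e.\ $\chi(\xi)=\theta(|\xi|)$ with $\theta$ nonincreasing on $[0,\infty)$, so that $|\xi/2|\le|\xi|$ forces $\chi(\xi/2)\ge\chi(\xi)$. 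This is easy to arrange — convolving the indicator of a ball with a nonnegative radial, radially nonincreasing mollifier yields such a $\chi$ — but it is an extra requirement on $\chi$ beyond ``equal to $1$ on $\{|\xi|\le 3/4\}$ and supported in $\{|\xi|\le 4/3\}$,'' and since nonnegativity of $\varphi$ is explicitly part of the claim, the proof should state the monotonicity hypothesis and draw the conclusion $\varphi\ge 0$ from it. With that one sentence added, the argument is complete; everything else (the radiality of $\varphi$, its support in the annulus $\{3/4\le|\xi|\le 8/3\}$, the two partitions of unity by telescoping, and the two disjointness conditions via the comparison $8/3<3$ and $4/3<3/2$) checks out.
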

Set $\varphi_{j}(\xi)=\varphi(2^{-j}\xi)$ and let $h=\mathcal{F}^{-1}\varphi$ and $\bar{h}=\mathcal{F}^{-1}\chi.$ Define the frequency localization operators $\Delta_{j}$ and $S_{j}$ For every $f\in\mathcal{S}^\prime(\RR^3)$ by
\begin{equation*}
\Delta_{j}f=\varphi(2^{-j}\DD)f= 2^{3\,j}h (2^{j}\cdot) \ast f\;\;,\,\textnormal{for}\;\; j\geqslant 0,
\end{equation*}
\begin{equation*}
S_{j}f=\chi(2^{-j}\DD)f =\displaystyle \sum_{-1\le p\le j-1}\Delta_{p}f= 2^{3\,j} \bar{h}(2^{j}\cdot) \ast f ,
\end{equation*}
\begin{equation*}
\Delta_{-1}f=S_{0}f ,\qquad \Delta_{j}f=0 \qquad \textnormal{for}\;\;\; j\le-2.
\end{equation*}\\
One can easily prove that for every tempered distrubution $v$, we have
\begin{equation}\label{i}
v(x)=\sum_{j\ge-1}\Delta_{j}v(x).
\end{equation}
The homogeneous operators are defined as follows
$$\forall j \in \ZZ;\,\dot{\Delta}_{j}v=\varphi(2^{-j}\DD)v\;\;\textnormal{and}\;\;\dot{S}_j v=\sum_{k \le j-1}\dot{\Delta}_{k}v.$$
From the homogeneous decomposition, we have
$$v=\displaystyle \sum_{j\in\ZZ}\dot{\Delta}_{j}v,\qquad \forall v\in \mathcal{S}^{\prime}(\RR^3)/\mathcal{P}(\RR^3),$$
where $\mathcal{P}(\RR^3)$ is the space of all polynomials see \cite{jp76}.\\
From the paradifferential calculus introduce by J.-M. Bony \cite{bo81} the product $vw$ can be formally divided into three parts as follows :
\begin{equation}\label{j}
vw=T_v w+T_wv+R(v,w),
\end{equation}
where
\begin{equation*}
T_vw=\sum_{j}S_{j-1}v\Delta_{j}w\;\;\textnormal{and}\;\;R(v,w)=\sum_{j}\Delta_{j} v \widetilde{\Delta}_{j}w,
\end{equation*}
$$\textnormal{with}\:\:\:\widetilde{\Delta}_{j}=\sum_{i=-1}^{1}\Delta_{j+i}.$$
$T_{v}w$ called paraproduct of $w$ by $v$ and $R(v,w)$ the reminder term.\\ 
Recall now the following definition of general Besov spaces.
\begin{defi}\label{def1}
Let $s\in\RR$ and $1 \le p,r \le +\infty.$ The inhomogeneous Besov space $B_{p,r}^s$ is defined by
$$B^{s}_{p,r}=\left\lbrace f \in \mathcal{S}^{\prime}(\RR^{3}):\Vert f \Vert_{B^{s}_{p,r}}<\infty \right\rbrace,$$
where 
\begin{equation*}\|f\|_{B_{p,r}^s}:=
\left\{\begin{array}{ll} 
\bigg(\displaystyle  \sum_{j \geq 0} 2^{jsr} \|\Delta_j f\|^{r}_{L^{p}}\bigg)^\frac{1}{r}+\Vert S_{0}f \Vert_{L^{p}}  \;\;\textnormal{for}\;\; r<\infty,\\
\displaystyle \sup_{j \geq 0} 2^{js} \|\Delta_j f\|_{L^{p}}+\Vert S_{0}f \Vert_{L^{p}} \;\;\textnormal{for}\;\; r=\infty. 
\end{array} \right.
\end{equation*}
The homogeneous norm
\begin{equation*}\Vert f \Vert_{\dot{B}_{p,r}^{s}}:= 
\left\{\begin{array}{ll} 
\bigg(\displaystyle \sum_{j \in \ZZ} 2^{jsr} \Vert\dot{\Delta}_{j} f \Vert^{r}_{L^{p}}\bigg)^\frac{1}{r}\;\;\textnormal{for}\;\; r<\infty\\
\displaystyle \sup_{j \in \ZZ} 2^{js} \Vert\dot{\Delta}_{j} f \Vert_{L^{p}}\;\;\textnormal{for}\;\; r=\infty.  
\end{array} \right.
\end{equation*} 
\end{defi}
We have the following Bernstein inequality see Chemin \cite{che98}. 
\begin{lem}\label{Bernstein} There exists a constant $C>0$ such that for $1\le p_{1}\le p_{2}\le\infty$, $k\in\NN$, $j\in\ZZ$ and for  every function $v$ we have
\begin{eqnarray*}
\sup_{\vert\alpha\vert=k}\Vert\partial^{\alpha}S_{j}v\Vert_{L^{p_{2}}}&\le& C^{k}2^{j\big(k+3\big(\frac{1}{p_{1}}-\frac{1}{p_{2}}\big)\big)}\Vert S_{j}v \Vert_{L^{p_{1}}},\\
C^{-k}2^{jk}\Vert\dot{\Delta}_{j}v\Vert_{L^{p_{1}}}&\le&\sup_{\vert\alpha\vert=k}\Vert\partial^{\alpha}\dot{\Delta}_{j}v \Vert_{L^{p_{1}}}\le C^{k}2^{jk}\Vert \dot{\Delta}_{j}v\Vert_{L^{p_{1}}}.
\end{eqnarray*}
\end{lem}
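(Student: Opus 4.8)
The plan is to deduce both estimates from the single structural fact that $S_j v$ and $\dot{\Delta}_j v$ have Fourier support in a ball, respectively an annulus, of size $\sim 2^j$. On such spectrally localized functions every derivative is realized by convolution with an explicit, rescaled kernel, and Young's inequality converts the $L^q$ norm of that kernel into the claimed powers of $2^j$. So the whole proof is a combination of kernel scaling and Young's inequality, once the multipliers are set up correctly.

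For the first inequality, I would fix $\tilde{\chi}\in\mathscr{D}(\RR^3)$ with $\tilde{\chi}\equiv 1$ on $\mathrm{supp}\,\chi$, so that $S_j v=\tilde{\chi}(2^{-j}\DD)S_j v$. Writing $\tilde{h}=\mathcal{F}^{-1}\tilde{\chi}$ and transferring the derivative onto the multiplier gives $\partial^{\alpha}S_j v=K_{j,\alpha}\ast S_j v$, where $K_{j,\alpha}=\mathcal{F}^{-1}\big[(i\xi)^{\alpha}\tilde{\chi}(2^{-j}\xi)\big]$. The change of variable $\xi=2^{j}\eta$ produces the clean scaling $K_{j,\alpha}(x)=2^{j(3+k)}(\partial^{\alpha}\tilde{h})(2^{j}x)$ for $|\alpha|=k$. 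Let $q$ be the Young exponent fixed by $1+\tfrac{1}{p_2}=\tfrac{1}{q}+\tfrac{1}{p_1}$, which is legitimate since $p_1\le p_2$ forces $q\ge 1$. Then $\Vert\partial^{\alpha}S_j v\Vert_{L^{p_2}}\le\Vert K_{j,\alpha}\Vert_{L^{q}}\Vert S_j v\Vert_{L^{p_1}}$, and rescaling the kernel norm yields $\Vert K_{j,\alpha}\Vert_{L^{q}}=2^{j(k+3(\frac{1}{p_1}-\frac{1}{p_2}))}\Vert\partial^{\alpha}\tilde{h}\Vert_{L^{q}}$, because $3-\tfrac{3}{q}=3(\tfrac{1}{p_1}-\tfrac{1}{p_2})$. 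Taking the supremum over $|\alpha|=k$ and invoking the uniform bound $\sup_{|\alpha|=k}\Vert\partial^{\alpha}\tilde{h}\Vert_{L^{q}}\le C^{k}$ gives the first line.

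The upper bound in the second inequality is the special case $p_1=p_2$ (hence $q=1$) of the same computation, with $\chi$ replaced by $\varphi$ and $\tilde{\chi}$ by a cutoff $\tilde{\varphi}$ equal to $1$ on $\mathrm{supp}\,\varphi$. For the reverse (lower) bound I would exploit the annular support: on $\mathrm{supp}\,\varphi(2^{-j}\cdot)$ one has $\tilde{\varphi}(2^{-j}\xi)=1$ while $|\xi|$ stays bounded below, so the multinomial identity $|\xi|^{2k}=\sum_{|\alpha|=k}\binom{k}{\alpha}\xi^{2\alpha}$ lets me write $\widehat{\dot{\Delta}_j v}=\sum_{|\alpha|=k}\binom{k}{\alpha}(-i)^{k}m_{j,\alpha}\,\widehat{\partial^{\alpha}\dot{\Delta}_j v}$ with $m_{j,\alpha}(\xi)=\tilde{\varphi}(2^{-j}\xi)\,\xi^{\alpha}|\xi|^{-2k}$. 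Inverting the Fourier transform presents $\dot{\Delta}_j v$ as a finite sum of convolutions of the $\partial^{\alpha}\dot{\Delta}_j v$ against kernels that scale like $2^{-jk}$; Young's inequality in $L^{p_1}$ with these $L^{1}$ kernels, together with the count $\sum_{|\alpha|=k}\binom{k}{\alpha}=3^{k}$, then delivers $2^{jk}\Vert\dot{\Delta}_j v\Vert_{L^{p_1}}\le C^{k}\sup_{|\alpha|=k}\Vert\partial^{\alpha}\dot{\Delta}_j v\Vert_{L^{p_1}}$, which is the stated lower bound.

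The one genuinely delicate point, and the main obstacle, is upgrading the constants from a $k$-dependent factor to the uniform geometric form $C^{k}$ (and $C^{-k}$). This amounts to bounding $\sup_{|\alpha|=k}\Vert\partial^{\alpha}\tilde{h}\Vert_{L^{q}}$ and $\Vert\mathcal{F}^{-1}m_{0,\alpha}\Vert_{L^{1}}$ by $C^{k}$ uniformly in $k$ and $q$, a Gevrey/analyticity-type estimate on the \emph{fixed} smooth compactly supported cutoffs: one integrates by parts repeatedly against a weight $(1+|x|^2)^{-N}$ and tracks the multinomial combinatorics so that the resulting factorial growth is absorbed into a single geometric constant. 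Everything else reduces to the routine scaling identities and Young's inequality recorded above.
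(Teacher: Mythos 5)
Your proof is correct: the paper gives no proof of this lemma at all, simply citing Chemin \cite{che98}, and your argument --- spectral localization by an auxiliary cutoff, transfer of the derivative to a rescaled convolution kernel, Young's inequality with the exponent $1+\frac{1}{p_2}=\frac{1}{q}+\frac{1}{p_1}$, and the multinomial identity $|\xi|^{2k}=\sum_{|\alpha|=k}\binom{k}{\alpha}\xi^{2\alpha}$ for the reverse bound on the annulus --- is precisely the standard proof given in that reference. The one genuinely delicate point, namely that the kernel norms grow only geometrically in $k$ so that the constants take the form $C^{k}$ and $C^{-k}$, is correctly identified and correctly resolved by your weighted integration-by-parts argument (the polynomial factors coming from differentiating $\xi^{\alpha}$ a bounded number of times are absorbed into the geometric constant).
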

Using this Lemma and the definition above of Besov space we have $\forall v \in \mathcal{S}^{\prime}$, $s \in \RR$ and $p_{1} \le p_{2},$
\begin{eqnarray*}
\Vert v \Vert_{B^{s+3(\frac{1}{p_{2}}-\frac{1}{p_{1}})}_{p_{2},1}}&=& \sum_{j\ge-1} 2^{j(s+3(\frac{1}{p_{2}}-\frac{1}{p_{1}}))}\Vert\Delta_{j} v \Vert_{L^{p_2}}\\
&\lesssim& \sum_{j\ge-1} 2^{js}\Vert\Delta_{j} v \Vert_{L^{p_1}}.
\end{eqnarray*}
This gives the embedding 
$$B^{s}_{p_{1},1}\hookrightarrow B^{s+3(\frac{1}{p_{2}}-\frac{1}{p_{1}})}_{p_{2},1}\quad,\; p_1\le p_2.$$
We can easily then generalise  this embedding for all $r_{1}\le r_{2}$ that is we have,
\begin{equation}\label{k}
B_{p_1,r_1}^{s}\hookrightarrow B_{p_2,r_2}^{s+3(\frac{1}{p_2}-\frac{1}{p_1})}\,,\;\,p_1\le p_2\;\;\textnormal{and}\;\; r_1 \le r_2.
\end{equation}
\subsection{Lorentz spaces}
Before introduce the definition of the Lorentz spaces, we recall the nonincreasing rearrangement. Let $h$ a measurable function we define its nonincreasing rearrangement $h^{\ast}:\RR_{+}\longrightarrow \RR_{+}$ by the formula :
\begin{equation*}
h^{\ast}(t):= \inf\Big\{s^{\prime}\ge 0; l(\{y,\vert h(y)\vert > s^{\prime}\})\le t\Big\},
\end{equation*}
where $l$ denote the usual Lebesgue measure.
\begin{defi}\label{def2}(Lorentz space)
Let $h$ a measurable function and $1 \le p\le \infty.$ Then $h$ belong to the space of Lorentz if 
\begin{equation*}\Vert h \Vert_{L^{p,r}}:= 
\left\{\begin{array}{ll} 
\big(\displaystyle\int^{\infty}_{0}(t^{\frac{1}{p}}h^{\ast}(t))^{r}\frac{dt}{t}\big)^{\frac{1}{r}}<\infty\;\;\textnormal{if}\;\;1\le r<\infty\\
\displaystyle \sup_{t>0}t^{\frac{1}{p}}h^{\ast}(t)\;\;\textnormal{if}\;\;r=\infty.  
\end{array} \right.
\end{equation*} 
\end{defi}   
We can also define Lorentz spaces by interpolation from Lebesgue spaces :
$$(L^{p_1},L^{p_2})_{(\mu,r)}=L^{p,r},$$
where $1 \le p_1 \le p_2 \le \infty$, $\mu$ satisfies $\frac{1}{p}=\frac{1-\mu}{p_1}+\frac{\mu}{p_2}$ and $1\le r\le\infty.$\\
From this definition, we get
\begin{equation*}
L^{p,r}\hookrightarrow L^{p,r_1}\;\;,\;\forall 1 \le p \le \infty,\;1 \le r \le r_1 \le \infty\qquad\textnormal{and}\;\;\;L^{p,p}=L^p.
\end{equation*}

\subsection{About a transport equation}
We will give now some useful estimates for any smooth solution of linear transport-diffusion model given by
\begin{equation}\label{dd} 
\left\{ \begin{array}{ll}  
\partial_{t}h+v\cdot\nabla h-\kappa\Delta h=g\\
h_{| t=0}=h^{0} 
\end{array} \right.
\end{equation} 
 We will give three kind of estimates : the first is the $L^p$ estimate, the second describes the propagation of Besov regularity and the third is the smoothing effects.
\begin{lem}\label{m} Let $v$ be a smooth divergence-free vector field of $\RR^3$ and $h$ be a smooth solution of \eqref{dd}. Then we have $\forall p \in [1,\infty]$ and for every $\kappa \ge 0,$  
$$\Vert h(t)\Vert_{L^p}\le \Vert h^{0}\Vert_{L^p}+\int_{0}^{t}\Vert g(\tau)\Vert_{L^p}d\tau.$$ 
\end{lem}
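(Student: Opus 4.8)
The plan is to run a direct $L^p$ energy estimate, exploiting the divergence-free condition to eliminate the transport term and the parabolic sign of the Laplacian to discard the diffusion contribution entirely, so that only the source term survives. Assume first that $1<p<\infty$. I would multiply the equation \eqref{dd} by $|h|^{p-2}h$ and integrate over $\RR^3$. The time-derivative term yields $\frac{1}{p}\frac{d}{dt}\Vert h\Vert_{L^p}^{p}$. The convective term becomes
$$\int_{\RR^3}(v\cdot\nabla h)\,|h|^{p-2}h\,dx=\frac{1}{p}\int_{\RR^3}v\cdot\nabla(|h|^{p})\,dx=-\frac{1}{p}\int_{\RR^3}(\textnormal{div}\,v)\,|h|^{p}\,dx=0,$$
after integrating by parts, precisely because $\textnormal{div}\,v=0$.

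The diffusion term is the crucial favorable one: integrating by parts,
$$-\kappa\int_{\RR^3}(\Delta h)\,|h|^{p-2}h\,dx=\kappa(p-1)\int_{\RR^3}|h|^{p-2}|\nabla h|^{2}\,dx\ge 0$$
for every $\kappa\ge 0$, so it can simply be dropped from an upper bound. For the source I would apply H\"older's inequality with exponents $p$ and $p'=\frac{p}{p-1}$, giving $\int_{\RR^3}g\,|h|^{p-2}h\,dx\le\Vert g\Vert_{L^p}\Vert h\Vert_{L^p}^{p-1}$. Collecting these facts leads to $\frac{1}{p}\frac{d}{dt}\Vert h\Vert_{L^p}^{p}\le\Vert g\Vert_{L^p}\Vert h\Vert_{L^p}^{p-1}$; dividing by $\Vert h\Vert_{L^p}^{p-1}$ (on the set of times where it is nonzero, the estimate being trivial otherwise) yields $\frac{d}{dt}\Vert h\Vert_{L^p}\le\Vert g\Vert_{L^p}$, and integrating in time from $0$ to $t$ gives exactly the claimed inequality. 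Note that no Gronwall exponential appears, since the source enters additively.

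The only genuine obstacle is the two endpoints $p=1$ and $p=\infty$, where $|h|^{p-2}h$ is not smooth and the manipulations above are not directly licit. I would treat them by a regularization argument: replace $|s|^{p}$ by the smooth convex function $s\mapsto(s^{2}+\varepsilon^{2})^{p/2}$, multiply \eqref{dd} by its derivative, and repeat the computation. Convexity of this function guarantees that the diffusion term retains its good sign, the divergence-free structure still annihilates the convective term, and H\"older controls the source; letting $\varepsilon\to 0$ recovers the estimate for $p=1$. For $p=\infty$ one may instead pass to the limit $p\to\infty$ in the inequality already obtained, or argue by the maximum principle for the transport-diffusion operator. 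The remaining point requiring attention is merely the justification of the integrations by parts and the passage to the limit for a general smooth solution, i.e.\ sufficient decay at infinity; since $h$ is assumed smooth this is routine, and this endpoint bookkeeping is the only mild difficulty in an otherwise standard argument.
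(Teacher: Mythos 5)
Your proposal is correct and follows essentially the same route as the paper: multiplication by $|h|^{p-2}h$, integration by parts using $\textnormal{div}\,v=0$ and the good sign of the diffusion term for finite $p$, and the maximum principle for $p=\infty$. The only difference is that you spell out the details (including the endpoint regularization) that the paper leaves implicit.
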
 
\begin{proof}
For finite $p$, multiplying the first equation of \eqref{dd} by $\vert h \vert^{p-2} h$, integrating by parts and using the condition of divergence free for the vector field, we find the estimation of Lemma. While for $p=\infty$ it is just the maximum principle see \cite{h05} for more details. 
\end{proof}
\begin{prop}\label{prop2} Let $-1 < s <1$, $1 \le p,r \le \infty$ and $v$ be a smooth divergence-free vector field. Let $h$ be a smooth solution of \eqref{dd} such that $h^0 \in B_{p,r}^s(\RR^3)$ and $g \in L^1_{loc}(\RR_+,B_{p,r}^s).$ Then $\forall t \in \RR_+,$ we have with $\kappa=0,$
\begin{equation*}
\Vert h(t) \Vert_{B_{p,r}^s}\lesssim e^{C V_{1}(t)}(\Vert h^0 \Vert_{B_{p,r}^s}+\int_0^t e^{-C V_{1}(\tau)}\Vert g(\tau) \Vert_{B_{p,r}^s} d\tau),
\end{equation*}
where $C$ is a constant depending on $s$ and $V_{1}(t):=\Vert\nabla v(t) \Vert_{L_t^1L^\infty}.$\\ The above estimate is true in the two following case
$$s=-1, r=\infty, 1 \le p \le \infty\;\textnormal{and}\;\; s=r=1, 1 \le p \le \infty,$$
in this case we change $V_{1}(t)$ by $V(t):=\Vert v(t) \Vert_{L_t^1 B_{\infty,1}^1}.$ 
\end{prop}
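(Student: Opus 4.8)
The plan is to localize the equation \eqref{dd} (with $\kappa=0$) in frequency by applying the dyadic operator $\Delta_j$ and then to run a Gronwall argument on each block, after which one reassembles the pieces in the $\ell^r$-weighted sum that defines the Besov norm. First I would set $h_j:=\Delta_j h$ and apply $\Delta_j$ to the transport equation to obtain
\begin{equation*}
\partial_t h_j + v\cdot\nabla h_j = \Delta_j g - [\Delta_j,\,v\cdot\nabla]h,
\end{equation*}
where the commutator $[\Delta_j,\,v\cdot\nabla]h=\Delta_j(v\cdot\nabla h)-v\cdot\nabla\Delta_j h$ collects the error between localizing and differentiating. Since $\operatorname{div} v=0$, the transport term $v\cdot\nabla h_j$ contributes nothing to $\frac{d}{dt}\|h_j\|_{L^p}$ (this is exactly the computation behind Lemma \ref{m}), so an $L^p$ energy estimate on the block gives
\begin{equation*}
\|h_j(t)\|_{L^p}\le \|\Delta_j h^0\|_{L^p}+\int_0^t\|\Delta_j g(\tau)\|_{L^p}\,d\tau+\int_0^t\big\|[\Delta_j,\,v\cdot\nabla]h(\tau)\big\|_{L^p}\,d\tau.
\end{equation*}

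The main obstacle is the commutator term, and this is where the hypothesis $-1<s<1$ enters decisively. I would estimate $[\Delta_j,\,v\cdot\nabla]h$ by decomposing $v\cdot\nabla h$ with the Bony paraproduct \eqref{j} into $T_{v}\nabla h$, $T_{\nabla h}v$, and $R(v,\nabla h)$ (using $\operatorname{div}v=0$ to write $v\cdot\nabla h=\operatorname{div}(hv)$ where convenient), commuting $\Delta_j$ through each piece. The paraproduct pieces produce, after using Bernstein's inequality (Lemma \ref{Bernstein}) and the spectral localization of Proposition \ref{prop1}, a bound of the schematic form
\begin{equation*}
2^{js}\big\|[\Delta_j,\,v\cdot\nabla]h\big\|_{L^p}\le C\,c_j(t)\,\|\nabla v(t)\|_{L^\infty}\,\|h(t)\|_{B_{p,r}^s},
\qquad \|(c_j)\|_{\ell^r}=1,
\end{equation*}
the constant $C$ depending on $s$; the restriction $|s|<1$ is precisely what makes the remainder $R(v,\nabla h)$ summable and keeps both paraproduct terms convergent in the weighted norm.

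Once the commutator is controlled, I would multiply the block estimate by $2^{js}$, take the $\ell^r$ norm in $j$, and insert the commutator bound to obtain an integral inequality
\begin{equation*}
\|h(t)\|_{B_{p,r}^s}\le \|h^0\|_{B_{p,r}^s}+\int_0^t\|g(\tau)\|_{B_{p,r}^s}\,d\tau
+C\int_0^t\|\nabla v(\tau)\|_{L^\infty}\,\|h(\tau)\|_{B_{p,r}^s}\,d\tau.
\end{equation*}
Applying the Gronwall lemma in the form $\frac{d}{dt}\big(e^{-CV_1(t)}\|h(t)\|_{B_{p,r}^s}\big)\le e^{-CV_1(t)}\|g(t)\|_{B_{p,r}^s}$, with $V_1(t)=\|\nabla v\|_{L_t^1 L^\infty}$, yields exactly the claimed estimate. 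For the two endpoint cases $s=-1,r=\infty$ and $s=r=1$, the remainder and one paraproduct term are no longer tamed by the $\ell^r$ summation at the same Besov index, so I would instead estimate the commutator using the stronger quantity $\|v\|_{B_{\infty,1}^1}$ in place of $\|\nabla v\|_{L^\infty}$ (this gains the extra logarithmic summability needed at the endpoints), and replace $V_1$ by $V$ throughout the Gronwall step; the rest of the argument is unchanged.
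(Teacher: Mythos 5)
The paper gives no proof of this proposition: it only cites Abidi--Hmidi--Keraani for the endpoint cases $s=\pm1$ and Chemin/Vishik for $-1<s<1$. Your sketch is precisely the standard argument from those references --- dyadic localization, the $L^p$ block estimate using $\textnormal{div}\,v=0$, the commutator bound $\bigl\Vert 2^{js}\Vert[\Delta_j,v\cdot\nabla]h\Vert_{L^p}\bigr\Vert_{\ell^r}\lesssim \Vert\nabla v\Vert_{L^\infty}\Vert h\Vert_{B^s_{p,r}}$ whose validity is exactly the range $-1<s<1$, then Gronwall, with $\Vert v\Vert_{B^1_{\infty,1}}$ replacing $\Vert\nabla v\Vert_{L^\infty}$ at the endpoints --- so it is correct and takes the same route as the paper's sources.
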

Abidi, Hmidi and Keraani \cite{ahs08} are proved this result for the case $s=\pm1$. The remainder cases are done in \cite{che98, vis98}.\\  
We give now the following smoothing effects for a transport equation \eqref{dd} with respect to a vector field which is not necessary Lipschitzian (see \cite{hk1} for a proof).
\begin{prop}\label{prop3} Let $v$ be a smooth divergence-free vector field of $\RR^3$ with vorticity $\omega:= \nabla\times v$. Let $h$ be a smooth solution of \eqref{dd}. Then for $h^0 \in L^p$, $1 \le p \le \infty$, $\kappa>0$ and $t\in \RR_+,$ we have for all $j \in\NN$ with $(g=0)$
$$2^{2j} \displaystyle \int_0^t \Vert \Delta_{j} h(\tau)\Vert_{L^p} d\tau \lesssim \Vert h^0 \Vert_{L^p}\Big(1+(j+1)\displaystyle \int_0^t \Vert \omega(\tau)\Vert_{L^\infty} d\tau+\displaystyle \int_0^t \Vert\nabla\Delta_{-1} v(\tau)\Vert_{L^\infty} d\tau\Big).$$ 
\end{prop}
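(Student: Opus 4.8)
The plan is to localize the equation in frequency, derive for each dyadic block $\Delta_j h$ a dissipative differential inequality using a positivity estimate for the Laplacian, and then absorb the resulting commutator into $\|\omega\|_{L^\infty}$ via a scale-by-scale Biot--Savart bound. Throughout I take $\kappa=1$ as in the rest of the paper; the general $\kappa>0$ case differs only by the placement of $\kappa$ in the constants. First I would apply $\Delta_j$ to \eqref{dd} with $g=0$ and write the equation for $h_j:=\Delta_j h$ as
$$\partial_t h_j+v\cdot\nabla h_j-\Delta h_j=-[\Delta_j,v\cdot\nabla]h=:f_j .$$
For $j\ge 0$ the function $h_j$ is spectrally supported in an annulus $\{|\xi|\sim 2^j\}$, so the positivity (generalized Bernstein) inequality of C\'ordoba--C\'ordoba gives, for all $1\le p\le\infty$,
$$\int_{\RR^3}(-\Delta h_j)\,|h_j|^{p-2}h_j\,dx\ge c\,2^{2j}\|h_j\|_{L^p}^p ,$$
the case $p=\infty$ being read off at a point of maximum of $|h_j|$. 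Multiplying the localized equation by $|h_j|^{p-2}h_j$, integrating, and using $\textnormal{div}\,v=0$ to kill the transport term yields the differential inequality
$$\frac{d}{dt}\|h_j\|_{L^p}+c\,2^{2j}\|h_j\|_{L^p}\le\|f_j\|_{L^p}.$$

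Integrating this inequality by Duhamel, multiplying by $2^{2j}$, integrating once more in time over $[0,t]$ and using $\int_0^t 2^{2j}e^{-c2^{2j}\sigma}\,d\sigma\le c^{-1}$ together with Fubini, I obtain
$$2^{2j}\int_0^t\|h_j(\tau)\|_{L^p}\,d\tau\lesssim\|\Delta_j h^0\|_{L^p}+\int_0^t\|f_j(\tau)\|_{L^p}\,d\tau .$$
Since $\Delta_j$ is bounded on $L^p$, the first term is $\lesssim\|h^0\|_{L^p}$; and by Lemma \ref{m} (with $g=0$) one has $\|h(\tau)\|_{L^p}\le\|h^0\|_{L^p}$ for every $\tau$. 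It therefore remains to prove the commutator bound
$$\|[\Delta_j,v\cdot\nabla]h\|_{L^p}\lesssim\big((j+1)\|\omega\|_{L^\infty}+\|\nabla\Delta_{-1}v\|_{L^\infty}\big)\|h\|_{L^p},$$
after which inserting $\|h(\tau)\|_{L^p}\le\|h^0\|_{L^p}$ and integrating in time produces exactly the claimed estimate.

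This commutator bound is the heart of the matter and the step I expect to be the main obstacle, since $\|\nabla v\|_{L^\infty}$ is not controlled by $\|\omega\|_{L^\infty}$ at this borderline regularity. I would split $v\cdot\nabla h$ by Bony's decomposition \eqref{j}. The leading contribution is $\sum_{|k-j|\le N}[\Delta_j,S_{k-1}v\cdot\nabla]\Delta_k h$, to which I apply the classical commutator estimate $\|[\Delta_j,a]b\|_{L^p}\lesssim 2^{-j}\|\nabla a\|_{L^\infty}\|b\|_{L^p}$ with $a=S_{k-1}v$ and $b=\nabla\Delta_k h$; since $\|\nabla\Delta_k h\|_{L^p}\sim 2^j\|\Delta_k h\|_{L^p}$ for $k\approx j$, this produces a factor $\|\nabla S_{k-1}v\|_{L^\infty}\|\Delta_k h\|_{L^p}$. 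The decisive point is the dyadic Biot--Savart bound $\|\nabla\Delta_l v\|_{L^\infty}\lesssim\|\Delta_l\omega\|_{L^\infty}\lesssim\|\omega\|_{L^\infty}$ for $l\ge 0$ (the multiplier recovering $\nabla v$ from $\omega$ is homogeneous of degree zero and smooth on each annulus), which telescopes into
$$\|\nabla S_{k-1}v\|_{L^\infty}\lesssim\|\nabla\Delta_{-1}v\|_{L^\infty}+(j+1)\|\omega\|_{L^\infty}$$
for $k\approx j$; this is precisely where the logarithmic factor $(j+1)$ is born. Summing the finitely many terms $|k-j|\le N$ and using $\sum_{|k-j|\le N}\|\Delta_k h\|_{L^p}\lesssim\|h\|_{L^p}$ closes this piece. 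The two remaining Bony contributions, $\Delta_j(T_{\partial_i h}v^i)$ and $\Delta_j R(v^i,\partial_i h)$, require no cancellation: putting $\nabla\Delta_k v$ in $L^\infty$ (again $\lesssim\|\omega\|_{L^\infty}$, or $\|\nabla\Delta_{-1}v\|_{L^\infty}$ for $k=-1$) and $S_{k-1}h$ or $\widetilde\Delta_k h$ in $L^p$, the spectral localization renders the sums convergent (for the remainder a geometric factor $2^{j-k}$ makes the tail over $k\ge j-N$ summable) and they close against $\|h\|_{L^p}$ with only $\|\omega\|_{L^\infty}$. The main technical burden is thus to keep every piece closing against $\|h\|_{L^p}$ rather than a stronger Besov norm while preserving the sharp $(j+1)$ count.
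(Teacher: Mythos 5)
Your argument is correct and is essentially the proof of the reference the paper itself cites for this statement (the paper gives no proof of its own, deferring to \cite{hk1}): frequency localization, the generalized Bernstein positivity estimate yielding a dissipative differential inequality for each block, Duhamel, and the commutator bound $\Vert[\Delta_j,v\cdot\nabla]h\Vert_{L^p}\lesssim\big((j+1)\Vert\omega\Vert_{L^\infty}+\Vert\nabla\Delta_{-1}v\Vert_{L^\infty}\big)\Vert h\Vert_{L^p}$, with the factor $(j+1)$ born exactly where you place it, from summing $\Vert\nabla\Delta_l v\Vert_{L^\infty}\lesssim\Vert\Delta_l\omega\Vert_{L^\infty}$ over $0\le l\le j$. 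The one point to tighten is the endpoint $p=\infty$ (and $p=1$): the positivity inequality cannot simply be ``read off at a point of maximum''; one either proves the estimate for finite $p$ with constants uniform in $p$ and passes to the limit, or invokes the semigroup bound $\Vert e^{t\Delta}\Delta_j g\Vert_{L^p}\le Ce^{-ct2^{2j}}\Vert\Delta_j g\Vert_{L^p}$ recalled later in the paper.
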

On the other-hand, Hmidi \cite{h05} proved the following smoothing effects for the equation \eqref{dd} with $(g=0)$ in the case of Lipschitzian velocity,
\begin{equation}\label{0x}
2^{2j} \displaystyle \int_0^t \Vert \Delta_{j} h(\tau)\Vert_{L^p} d\tau \lesssim \Vert h^0 \Vert_{L^p}\Big(1+\int_0^t \Vert \nabla v(\tau)\Vert_{L^\infty} d\tau\Big). 
\end{equation}

\section{Study of Geometric Properties of the vorticity}\label{C}
The aim of this section is the study of some geometric properties of axisymmetric flows. We start with the following one which is proved in \cite{ahs08}. 
\begin{prop}\label{prop4} Let $v=(v^1,v^2,v^3)$ be a smooth axisymmetric vector field. Then we have\\
\begin{itemize}
\item For every $j \ge -1$, $\Delta_j v$ is axisymmetric and 
$$(\Delta_jv^1)(0,x_2,z)=(\Delta_jv^2)(x_1,0,z)=0.$$
\item The vector $\omega=\nabla\times v =(\omega^1,\omega^2,\omega^3)$ satisfies $\omega\times e_\theta=(0,0,0)$ and we have for every $(x_1,x_2,z) \in \RR^3,$ 
$$\omega^3=0,\;\;\;x_1 \omega^1(x_1,x_2,z)+ x_2 \omega^2(x_1,x_2,z)=0$$ and
$$\omega^1(x_1,0,z)= \omega^2(0,x_2,z)=0.$$
\end{itemize}
\end{prop}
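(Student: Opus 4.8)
The plan is to encode the hypothesis \emph{axisymmetric without swirl} as an exact invariance under a group of orthogonal maps fixing the axis, to observe that frequency localization commutes with this group because its kernel is radial, and finally to read off the vorticity statements from a direct computation of the curl.

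First I would reformulate the geometric hypothesis. Let $G$ be the group of linear isometries of $\RR^3$ preserving the $z$-axis, generated by the rotations $R_\alpha$ about the axis and the reflections $\sigma_1:(x_1,x_2,z)\mapsto(-x_1,x_2,z)$ and $\sigma_2:(x_1,x_2,z)\mapsto(x_1,-x_2,z)$ across the coordinate planes containing the axis. Writing $v=v^r e_r+v^z e_z$ in Cartesian form as $v^1=v^r\frac{x_1}{r}$, $v^2=v^r\frac{x_2}{r}$, $v^3=v^z$ with $v^r,v^z$ depending only on $(r,z)$, a one-line substitution shows that $v(Ox)=O\,v(x)$ for every $O\in G$; conversely, $R_\alpha$-invariance forces axisymmetry and $\sigma_i$-invariance kills the $e_\theta$-component, so these invariances characterize axisymmetric fields without swirl. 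The virtue of this formulation is that it passes trivially through the Littlewood--Paley blocks.

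The key lemma is that $\Delta_j$ (and $S_0=\Delta_{-1}$) commutes with every $O\in G$, in the sense that $v(Ox)=O\,v(x)$ implies $(\Delta_j v)(Ox)=O\,(\Delta_j v)(x)$. Indeed $\Delta_j$ is componentwise convolution with $h_j:=2^{3j}h(2^j\cdot)$, where $h=\mathcal{F}^{-1}\varphi$ is radial because $\varphi$ is radial, hence $h_j(Ow)=h_j(w)$; performing the change of variables $y=Oy'$ (unit Jacobian) in $(\Delta_j v)(Ox)=\int h_j(Ox-y)v(y)\,dy$ and using $h_j(O(x-y'))=h_j(x-y')$ together with $v(Oy')=O\,v(y')$ pulls the constant matrix $O$ outside the integral. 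Taking $O=R_\alpha$ shows $\Delta_j v$ is axisymmetric, and taking $O=\sigma_1,\sigma_2$ shows it is again without swirl. For the boundary vanishing, the first component of $(\Delta_j v)(\sigma_1 x)=\sigma_1(\Delta_j v)(x)$ says $\Delta_j v^1$ is odd in $x_1$, so $(\Delta_j v^1)(0,x_2,z)=0$, and similarly $\sigma_2$ gives $(\Delta_j v^2)(x_1,0,z)=0$; this settles the first bullet.

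For the second bullet I would compute the curl from the Cartesian form above. Using $\partial_1 r=\frac{x_1}{r}$ and $\partial_2 r=\frac{x_2}{r}$, a short calculation gives $\omega^3=\partial_1 v^2-\partial_2 v^1=0$ and
$$\omega^1=-\frac{x_2}{r}\,\omega^\theta,\qquad \omega^2=\frac{x_1}{r}\,\omega^\theta,\qquad \omega^\theta:=\partial_z v^r-\partial_r v^z,$$
so that $\omega=\omega^\theta e_\theta$. Every remaining claim is then immediate: $\omega$ is parallel to $e_\theta$, hence $\omega\times e_\theta=(0,0,0)$; $x_1\omega^1+x_2\omega^2=\frac{\omega^\theta}{r}(-x_1x_2+x_2x_1)=0$; and setting $x_2=0$ resp.\ $x_1=0$ yields $\omega^1(x_1,0,z)=\omega^2(0,x_2,z)=0$. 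I expect the only genuinely conceptual point to be the commutation lemma of the first bullet, namely that the radiality of the profile $\varphi$ makes each $\Delta_j$ equivariant for the full stabilizer $G$ of the axis; the boundary conditions are just this equivariance specialized to the two reflections, and the second bullet is an explicit curl computation. As an alternative to that computation, one could argue purely by symmetry: rotational invariance forces $\omega$ to be axisymmetric, while reflection invariance combined with the fact that $\mathrm{curl}$ is a pseudovector forces the $e_r$ and $e_z$ components of $\omega$ to vanish, leaving $\omega=\omega^\theta e_\theta$.
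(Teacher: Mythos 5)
Your proof is correct: the equivariance of each $\Delta_j$ under the stabilizer of the axis (because the kernel $h=\mathcal{F}^{-1}\varphi$ is radial) yields the first bullet, and the explicit curl computation $\omega=\omega^\theta e_\theta$ yields the second. The paper itself gives no proof of this proposition but defers to \cite{ahs08}, and your argument is essentially the one used there, so there is nothing to add.
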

We aim now to study a vorticity like equation :
\begin{equation}\label{n}
\left\{
\begin{array}{ll} 
\partial_{t}\Omega+v\cdot\nabla \Omega=\Omega\cdot\nabla v+curl(\rho e_{z})\\ 
\textnormal{div}\, v=0\\
\Omega_{| t=0}=\Omega^{0},
\end{array} \right.
\end{equation} 
and we will assume that $v$ and $\rho$ are axisymmetric, the unknown function $\Omega=(\Omega^1,\Omega^2,\Omega^3)$ is a vector field and at this stage we do not assume that $\Omega$ is the vorticity of $v.$ But $\Omega$ has some geometric properties with the vorticity $\omega.$ More precisely we prove the following Proposition which describes the preservation of some initial geometric conditions of $\Omega.$
\begin{prop}\label{prop5} Let $v$  be a smooth axisymmetric vector field with divergence free and $\Omega$ be the unique global solution of \eqref{n} with smooth initial data $\Omega^0.$ Then we have the following properties.\\
$1)$ If $\textnormal{div}\, \Omega^0=0$, then $\textnormal{div}\, \Omega(t)=0,\;\,\forall\,t\in\RR_+.$\\
$2)$ If $\Omega^0\times e_\theta=(0,0,0)$, then we have $\forall\, t\in\RR_+$
$$\Omega(t)\times e_\theta=(0,0,0).$$ 
Consequently $\Omega^1(t,x_1,0,z)=\Omega^2(t,0,x_2,z)=0$ and 
$$\Omega\cdot\nabla v=\frac {v^r}{r}\Omega.$$
\end{prop}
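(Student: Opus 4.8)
For the divergence I would simply apply $\textnormal{div}$ to the $\Omega$-equation in \eqref{n} and track each term. Componentwise, $\textnormal{div}(v\cdot\nabla\Omega)=\sum_{i,j}(\partial_iv^j)(\partial_j\Omega^i)+v\cdot\nabla(\textnormal{div}\,\Omega)$ and $\textnormal{div}(\Omega\cdot\nabla v)=\sum_{i,j}(\partial_i\Omega^j)(\partial_jv^i)+\Omega\cdot\nabla(\textnormal{div}\,v)$. The two double sums agree after relabelling $i\leftrightarrow j$, hence cancel in the difference; the term $\Omega\cdot\nabla(\textnormal{div}\,v)$ vanishes since $\textnormal{div}\,v=0$; and $\textnormal{div}\,\textnormal{curl}(\rho e_z)\equiv0$. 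Setting $d:=\textnormal{div}\,\Omega$ I am left with $\partial_td+v\cdot\nabla d=0$ and $d|_{t=0}=\textnormal{div}\,\Omega^0=0$, so by Lemma \ref{m} (equivalently, because $d$ is constant along the flow of the divergence-free field $v$) one gets $d\equiv0$.

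\textbf{Part 2.} In Cartesian coordinates $\Omega\times e_\theta=(0,0,0)$ is equivalent to $\Omega^3=0$ and $x_1\Omega^1+x_2\Omega^2=0$, so I introduce the scalars $g_3:=\Omega^3$ and $g_r:=x_1\Omega^1+x_2\Omega^2$ (which are, up to the factor $r$, the $e_z$- and $e_r$-components of $\Omega$) and try to show they solve a closed linear transport system with zero data. The forcing is $\textnormal{curl}(\rho e_z)=(\partial_2\rho,-\partial_1\rho,0)$ by \eqref{aa}, and since $\rho$ is axisymmetric, $x_1\partial_2\rho-x_2\partial_1\rho=0$, so the density drops out of the $g_r$-equation. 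Using $(\partial_t+v\cdot\nabla)(x_k\Omega^k)=x_k(\partial_t+v\cdot\nabla)\Omega^k+v^k\Omega^k$ (for $k=1,2$, since $v\cdot\nabla x_k=v^k$) together with the identity $x_1(\Omega\cdot\nabla v^1)+x_2(\Omega\cdot\nabla v^2)=\Omega\cdot\nabla W-v^1\Omega^1-v^2\Omega^2$, where $W:=x_1v^1+x_2v^2=rv^r$, the zeroth-order terms cancel and I obtain $(\partial_t+v\cdot\nabla)g_r=\Omega\cdot\nabla W$ and $(\partial_t+v\cdot\nabla)g_3=\Omega\cdot\nabla v^3$.

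The axisymmetry of $v$ closes the system: since $W$ and $v^3=v^z$ depend only on $(r,z)$, the chain rule gives $\Omega\cdot\nabla W=(\partial_rW/r)\,g_r+(\partial_zW)\,g_3$ and $\Omega\cdot\nabla v^3=(\partial_rv^z/r)\,g_r+(\partial_zv^z)\,g_3$, so that
$$(\partial_t+v\cdot\nabla)g_r=a\,g_r+b\,g_3,\qquad(\partial_t+v\cdot\nabla)g_3=c\,g_r+d\,g_3,$$
with smooth coefficients $a,b,c,d$ (near the axis $v^r$ and $\partial_rv^z$ vanish like $r$, so $v^r/r$ and $\partial_rv^z/r$ extend smoothly) and vanishing data $g_r|_{t=0}=g_3|_{t=0}=0$. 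Reading this along the characteristics of $v$, the pair $(g_r,g_3)(t,\psi_t(x))$ satisfies a homogeneous linear ODE in $t$ with zero initial value, hence vanishes identically; therefore $\Omega(t)\times e_\theta=(0,0,0)$ for all $t$. The pointwise identities follow: on $\{x_2=0\}$ the relation $g_r=x_1\Omega^1=0$ gives $\Omega^1(t,x_1,0,z)=0$, and on $\{x_1=0\}$ the relation $g_r=x_2\Omega^2=0$ gives $\Omega^2(t,0,x_2,z)=0$. Finally, writing $\Omega=\Omega^\theta e_\theta$ and using $\Omega\cdot\nabla=\frac{\Omega^\theta}{r}\partial_\theta$ together with $\partial_\theta v=\partial_\theta(v^re_r+v^ze_z)=v^re_\theta$ yields the stretching identity $\Omega\cdot\nabla v=\frac{v^r}{r}\Omega$.

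\textbf{Main obstacle.} The substantive part is the algebra of Part 2: recognising that the density source cancels by axisymmetry, absorbing the zeroth-order terms $v^k\Omega^k$ together with the stretching into $\Omega\cdot\nabla W$, and exploiting the $(r,z)$-dependence of $W$ and $v^z$ to collapse the right-hand sides onto $g_r$ and $g_3$ alone. I expect the choice of the two scalars $g_r,g_3$ and the reduction to a \emph{closed} $2\times2$ system to be the crux; everything downstream—the ODE-uniqueness along characteristics (which sidesteps any integrability issue caused by the factor $r$ in $g_r$) and the two corollaries—is then routine.
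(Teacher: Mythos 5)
Your proof is correct and follows essentially the same strategy as the paper: both parts hinge on showing that the quantities measuring the failure of $\Omega\times e_\theta=0$ satisfy a closed homogeneous linear transport system (with the density forcing dropping out because $\mathrm{curl}(\rho e_z)=-(\partial_r\rho)e_\theta$ has no $e_r$ or $e_z$ component) and hence vanish from zero data. The only differences are presentational: the paper works directly with the cylindrical components $\Omega^r,\Omega^z$ and closes via the $L^\infty$ maximum principle and Gronwall, whereas you use the Cartesian combinations $x_1\Omega^1+x_2\Omega^2=r\Omega^r$ and $\Omega^3$ and conclude by ODE uniqueness along characteristics, which indeed avoids having to bound $r\Omega^r$ in $L^\infty$.
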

\begin{proof} $1)$ We apply the divergence operator to the equation \eqref{n}, we get
$$\partial_{t}\textnormal{div}\,\Omega+\textnormal{div}(v\cdot\nabla\Omega)=\textnormal{div}(\Omega\cdot\nabla v)+\textnormal{div}(curl(\rho e_{z})).$$
Now we have $\textnormal{div}(curl(\rho e_{z}))=0$ and using the fact that $\textnormal{div}\,v=0,$ we get
\begin{eqnarray*}
\textnormal{div}(v\cdot\nabla\Omega)-\textnormal{div}(\Omega\cdot\nabla v)&=& \sum_{i,j=1}^{3}\partial_{j}(v^{i}\partial_{i}\Omega^{j})-\sum_{i,j=1}^{3}\partial_{j}(\Omega^{i}\partial_{i}v^{j})\\
&=& \sum_{i,j=1}^{3}v^{i}\partial_{ij}\Omega^{j}+\sum_{i,j=1}^{3}\partial_{j}v^{i}\partial_{i}\Omega^{j}-\sum_{i,j=1}^{3}\Omega^{i}\partial_{ij}v^{j}-\sum_{i,j=1}^{3}\partial_{j}\Omega^{i}\partial_{i}v^{j}\\
&=& v\cdot\nabla\textnormal{div}\Omega+\sum_{i,j=1}^{3}\partial_{j}v^{i}\partial_{i}\Omega^{j}-\sum_{i,j=1}^{3}\partial_{j}\Omega^{i}\partial_{i}v^{j}-\Omega\cdot\nabla\textnormal{div}v\\
&=& v\cdot\nabla\textnormal{div}\Omega.
\end{eqnarray*}
Thus we obtain the equation
$$\partial_{t}\textnormal{div}\,\Omega+v\cdot\nabla \textnormal{div}\,\Omega=0.$$
Therefore the quantity $\textnormal{div}\,\Omega$ is transported by the flow and then $\Omega$ satisfies the condition of the incompressibility for every time.\\\\
$2)$ Denote $(\Omega^r,\Omega^\theta,\Omega^z)$ the coordinates of $\Omega$ in cylindrical basis, then we can write $\Omega$ under the form 
$$\Omega=\Omega^r e_r+\Omega^\theta e_\theta+\Omega^z e_z.$$ Now
\begin{eqnarray}\label{o}
\nonumber \Omega(t)\times e_\theta&=& (\Omega^r(t) e_r+\Omega^\theta(t) e_\theta+\Omega^z(t) e_z)\times e_\theta\\
 &=&\Omega^r(t)e_z-\Omega^z(t) e_r,
\end{eqnarray}
 where we have used $e_r\times e_\theta=e_z$, $e_\theta\times e_\theta=0$ and $e_z\times e_\theta=-e_r.$ Hence it suffices to prove that $\Omega^r(t)=\Omega^z(t)=0.$ For this purpose,  we write 
\begin{equation}\label{p}
\partial_{t}\Omega\cdot e_r+(v\cdot\nabla \Omega)\cdot e_r=(\Omega\cdot\nabla v)\cdot e_r+curl(\rho e_{z})\cdot e_r.
\end{equation}
It is clear that $\Omega^r=\Omega\cdot e_r.$  
Now from \eqref{f}, we have
$$v\cdot\nabla=v^r \partial_r+v^z \partial_z$$ and since $\partial_r e_r=\partial_z e_r=0,$ we can get,
\begin{eqnarray}\label{q}
\nonumber(v\cdot\nabla\Omega)\cdot e_r&=&(v^r \partial_r \Omega+v^z \partial_z \Omega)\cdot e_r\\
\nonumber &=&(v^r \partial_r+v^z \partial_z)(\Omega\cdot e_r)\\
&=&v\cdot\nabla\Omega^r.
\end{eqnarray}
And
\begin{eqnarray}\label{r}
\nonumber(\Omega\cdot\nabla v)\cdot e_r&=&\Omega^r \partial_r v\cdot e_r+\frac{1}{r}\Omega^\theta\partial_\theta v\cdot e_r+\Omega^z \partial_z v\cdot e_r\\
&=&\Omega^r \partial_r v^r+\Omega^z \partial_z v^r.
\end{eqnarray}
According to \eqref{aa} we immediately have, 
\begin{equation}\label{s}
curl(\rho e_z)\cdot e_r=0.
\end{equation}
Putting \eqref{q},\eqref{r} and \eqref{s} into \eqref{p} we obtain 
$$\partial_{t}\Omega^r+v\cdot\nabla\Omega^r=\Omega^r \partial_r v^r+\Omega^z \partial_z v^r.$$
The maximum modulus principle gives
\begin{equation}\label{t}
\Vert\Omega^r(t)\Vert_{L^\infty}\le\int^t_0\big(\Vert\Omega^r(\tau)\Vert_{L^\infty}+\Vert\Omega^z(\tau)\Vert_{L^\infty} \big)\Vert\nabla v(\tau)\Vert_{L^\infty}d\tau.
\end{equation}
Similarly for the component $\Omega^z$, we find the equation,
$$\partial_{t}\Omega^z+v\cdot\nabla\Omega^z=\Omega^z \partial_z v^z+\Omega^r \partial_r v^z.$$
This leads to 
\begin{equation}\label{u}
\Vert\Omega^z(t)\Vert_{L^\infty}\le\int^t_0\big(\Vert\Omega^z(\tau)\Vert_{L^\infty}+\Vert\Omega^r(\tau)\Vert_{L^\infty} \big)\Vert\nabla v(\tau)\Vert_{L^\infty}d\tau.
\end{equation}
From \eqref{t} and \eqref{u} we get
$$\Vert\Omega^r(t)\Vert_{L^\infty}+\Vert\Omega^z(t)\Vert_{L^\infty}\lesssim\int^t_0\big(\Vert\Omega^r(\tau) \Vert_{L^\infty} +\Vert\Omega^z(\tau)\Vert_{L^\infty}\big)\Vert\nabla v(\tau)\Vert_{L^\infty}d\tau.$$
Using Gronwall's inequality, we obtain for every $t\in\RR_+,$
\begin{equation}\label{v}
\Omega^r(t)=\Omega^z(t)=0.
\end{equation}
This implies that $\Omega=\Omega^{\theta} e_{\theta}.$\\
Plugging \eqref{v} into \eqref{o}, we obtain finally
$$\Omega(t)\times e_\theta=(0,0,0).$$
Now, we use two facts, the first is the axisymmetry of the vector field $v$ and the second is \eqref{v}, which give:
\begin{eqnarray*}
\Omega\cdot\nabla v &=&\Omega^r \partial_r v+\frac{1}{r}\Omega^\theta \partial_\theta v+\Omega^z \partial_z v\\
&=&\frac{1}{r}\Omega^\theta \partial_\theta v\\
&=&\frac{1}{r}\Omega^\theta \partial_\theta(v^r e_r+v^z e_z)\\
&=&\frac{1}{r}\Omega^\theta(v^r \partial_\theta e_r+v^z \partial_\theta e_z)\\
&=&\frac{1}{r}\Omega^\theta v^r e_\theta.
\end{eqnarray*}
Then we obtain
$$\Omega\cdot\nabla v=\frac{1}{r}v^r\Omega.$$
The proof of the Proposition is now complete.
\end{proof}
\section{Proof of main result}\label{D} 
In this section we will prove Theorem \ref{Theo1}. The main step for the proof is to give a global a priori estimates of the Lipschitz norm of the velocity which is the significant quantity to bound due to the general theory of hyperbolic systems. This allows us to propagate the initial Besov regularity. We will use the method of \cite{ahs08} for the proof.

\subsection{A priori estimates}
We will prove three kinds of a priori estimates : the first one deals with some easy estimates that one can obtained by energy estimates. The second one is concerned with a global a priori estimate of the Lipschitz norm of the velocity and the last a priori estimates concerned with some strong estimates. 
\subsubsection{Weak a priori estimates}
First we prove the the following energy estimate,
\begin{prop}\label{prop a1} Let $(v,\rho)$ be a smooth solution of the system \eqref{a}, then\\
$(a)$ for $\rho^{0}\in L^{2}\cap L^{p}$ with $6< p$ and $t \in \RR_+,$ we have
$$\Vert\rho(t)\Vert_{L^2 \cap L^p}\le\Vert\rho^0\Vert_{L^2 \cap L^p}.$$
$(b)$ For $\rho^{0}\in L^{2}$ and $\kappa=1$ we have,
$$\Vert\rho\Vert^{2}_{L_t^\infty L^2}+2\Vert\nabla\rho\Vert^{2}_{L_t^2 L^2}=\Vert\rho^0 \Vert^{2}_{L^2}.$$
$(c)$ for $v^0 \in L^2$, $\rho^0 \in L^2$ and $t \in \RR_+,$ we have
$$\Vert v(t)\Vert_{L^2}\le C_0 (1+t),$$
where $C_0$ depends only on $\Vert v^0 \Vert_{L^2}$ and $\Vert \rho^0 \Vert_{L^2}.$
\end{prop}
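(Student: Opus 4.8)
The plan is to establish the three estimates in order, since each is an energy-type computation and part $(c)$ depends on the density bounds obtained in $(a)$ and $(b)$. The only structural subtlety is the buoyancy coupling, to which I return at the end.

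For $(a)$, I would note that $\rho$ solves the transport-diffusion equation \eqref{dd} with source $g=0$ and $\kappa=1$, which is exactly the setting of Lemma \ref{m}. That lemma gives $\Vert\rho(t)\Vert_{L^q}\le\Vert\rho^0\Vert_{L^q}$ for every $q\in[1,\infty]$; applying it with $q=2$ and $q=p$ and summing (or taking the maximum of) the two norms yields $\Vert\rho(t)\Vert_{L^2\cap L^p}\le\Vert\rho^0\Vert_{L^2\cap L^p}$ directly, with no further work.

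For $(b)$, I would pair the density equation with $\rho$ and integrate over $\RR^3$. The time derivative contributes $\tfrac12\tfrac{d}{dt}\Vert\rho\Vert_{L^2}^2$; the convection term vanishes after writing $\rho\,v\cdot\nabla\rho=\tfrac12 v\cdot\nabla(\rho^2)$ and integrating by parts using $\textnormal{div}\,v=0$; and the diffusion term $-\int_{\RR^3}\rho\,\Delta\rho$ becomes $\Vert\nabla\rho\Vert_{L^2}^2$ by integration by parts. This produces the identity $\tfrac12\tfrac{d}{dt}\Vert\rho\Vert_{L^2}^2+\Vert\nabla\rho\Vert_{L^2}^2=0$, which I would integrate from $0$ to $t$ and multiply by $2$ to recover the stated equality.

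For $(c)$, the analogous $L^2$ estimate on the velocity equation, pairing with $v$, kills both the convection term and the pressure gradient after integration by parts (again by $\textnormal{div}\,v=0$), leaving $\tfrac12\tfrac{d}{dt}\Vert v\Vert_{L^2}^2=\int_{\RR^3}\rho\,v^z\,dx$. Bounding the right-hand side by Cauchy-Schwarz as $\Vert\rho\Vert_{L^2}\Vert v\Vert_{L^2}$ and dividing by $\Vert v\Vert_{L^2}$ yields $\tfrac{d}{dt}\Vert v\Vert_{L^2}\le\Vert\rho\Vert_{L^2}$. The only place the coupling matters is the next step: I insert the uniform bound $\Vert\rho(t)\Vert_{L^2}\le\Vert\rho^0\Vert_{L^2}$ from $(a)$ and integrate in time to get $\Vert v(t)\Vert_{L^2}\le\Vert v^0\Vert_{L^2}+t\,\Vert\rho^0\Vert_{L^2}\le C_0(1+t)$, with $C_0$ depending only on the two initial $L^2$ norms. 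The mild subtlety throughout is precisely that the buoyancy source $\rho\,e_z$ couples the two equations, so the velocity estimate can be closed only after the density has been controlled; all individual terms are otherwise entirely standard.
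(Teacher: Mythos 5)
Your proof is correct and follows essentially the same route as the paper: part $(a)$ via Lemma \ref{m}, part $(b)$ by pairing the density equation with $\rho$ and integrating by parts, and part $(c)$ by the $L^2$ energy estimate on the velocity equation combined with the uniform bound $\Vert\rho(t)\Vert_{L^2}\le\Vert\rho^0\Vert_{L^2}$. No substantive differences to report.
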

Notice that the axisymmetric assumption is not needed in this Proposition.
\begin{proof}
The first estimate is a direct consequence of Lemma \ref{m}. To prove $(b)$, we take the $L^2$scalar product of the second equation of \eqref{a} with $\rho$ and integrating by parts we get since $v$ is divergence free  
$$\frac{1}{2}\frac{d}{dt}\Vert\rho(t)\Vert^2_{L^2}+\int_{\RR^3}\vert\nabla\rho(t,x)\vert^{2}dx=0.$$
Integrating in time this differential inequality, we get
$$\Vert\rho\Vert^{2}_{L_t^\infty L^2}+2\Vert\nabla\rho\Vert^{2}_{L_t^2 L^2}=\Vert\rho^0 \Vert^{2}_{L^2}.$$
For$(c),$ we take the $L^2(\RR^3)$ scalar product of the velocity equation with $v.$ From integration by parts and the fact that $v$ is a divergence free, we get
\begin{equation}\label{ss1}
\frac{1}{2}\frac{d}{dt}\Vert v(t)\Vert^2_{L^2}\le \Vert v(t)\Vert_{L^2}\Vert \rho(t)\Vert_{L^2}.
\end{equation}
This yields
$$\frac{d}{dt}\Vert v(t)\Vert_{L^2}\le \Vert \rho(t)\Vert_{L^2}.$$ 
By integration in time, we find that
$$\Vert v(t)\Vert_{L^2}\le \Vert v^0 \Vert_{L^2}+\int_0^t \Vert \rho(\tau)\Vert_{L^2}d\tau.$$
Since $\Vert\rho(t)\Vert_{L^2}\le \Vert \rho^0 \Vert_{L^2},$ we infer  
$$\Vert v(t)\Vert_{L^2}\le \Vert v^0 \Vert_{L^2}+t \Vert \rho^0 \Vert_{L^2}.$$
\end{proof}
We have now some estimates for the moment of the density $\rho$
\begin{prop}\label{p1}Let $v$ be a vector field with zero divergence satisfying the energy estimate of Proposition \ref{prop a1} and let $\rho$ be a solution of the second equation of \eqref{a}. Then we have the folliowing estimates for the moment of $\rho.$\\
$a)$ For $\rho^{0}\in L^2$ and $x_{h}\rho^{0}\in L^{2},$ then there exists $C_{0}>0$ such that for every $t>0$
$$\Vert x_{h}\rho\Vert_{L_{t}^{\infty}L^2}+\Vert x_{h}\rho\Vert_{L_{t}^{2}\dot{H}^{1}}\lesssim C_{0}(1+t^{\frac{5}{4}}).$$
$b)$ For $\rho^{0}\in L^{2}$ and $\vert x_{h}\vert^{2}\rho^{0}\in L^{2},$ there exists $C_{0}>0$ such that for every $t\ge0$
$$\Vert\vert x_{h}\vert^{2}\rho\Vert_{L_{t}^{\infty}L^2}+\Vert\vert x_{h}\vert^{2}\rho\Vert_{L_{t}^{2}\dot{H}^{1}}\le C_{0}(1+         t^{\frac{5}{2}}).$$
\end{prop}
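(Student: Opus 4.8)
The plan is to write down the transport--diffusion equation satisfied by each weighted density and then run a weighted $L^2$ energy estimate, using the heat smoothing from Proposition~\ref{prop a1} to close. Throughout I set $x_h=(x_1,x_2,0)$, so that $|x_h|^2=x_1^2+x_2^2$, and recall that $\rho$ solves $\partial_t\rho+v\cdot\nabla\rho-\Delta\rho=0$. The computation is driven by the two elementary facts $v\cdot\nabla x_i=v^i$ and $\Delta x_i=0$.

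For part $a)$ put $W=x_h\rho=(x_1\rho,x_2\rho)$. A direct calculation gives, for $i=1,2$,
\[
\partial_t(x_i\rho)+v\cdot\nabla(x_i\rho)-\Delta(x_i\rho)=\rho\,v^i-2\partial_i\rho .
\]
Taking the $L^2$ inner product with $x_i\rho$, summing over $i$, and using $\textnormal{div}\,v=0$ to kill the convection term, I obtain
\[
\tfrac12\tfrac{d}{dt}\|W\|_{L^2}^2+\|\nabla W\|_{L^2}^2=\int_{\RR^3}\rho^2\,(v\cdot x_h)\,dx-2\int_{\RR^3}\rho\,(x_h\cdot\nabla\rho)\,dx .
\]
The second integral is benign: integrating by parts and using $\textnormal{div}\,x_h=2$ turns it into $+2\|\rho\|_{L^2}^2$, bounded by $2\|\rho^0\|_{L^2}^2$ via Proposition~\ref{prop a1}. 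The first integral is the real difficulty, since at this stage $v$ is known only in $L^2$; I spend the diffusion on it. By Hölder with exponents $(2,6,3)$ and the Sobolev embedding $\dot H^1\hookrightarrow L^6$,
\[
\Big|\int_{\RR^3}\rho^2\,(v\cdot x_h)\,dx\Big|\le\|v\|_{L^2}\,\|W\|_{L^6}\,\|\rho\|_{L^3}\lesssim\|v\|_{L^2}\,\|\nabla W\|_{L^2}\,\|\rho\|_{L^3},
\]
and Young's inequality absorbs $\tfrac12\|\nabla W\|_{L^2}^2$ into the left-hand side, leaving the forcing $C\|v\|_{L^2}^2\|\rho\|_{L^3}^2$. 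The sharp time growth then comes from not estimating $\|\rho\|_{L^3}$ by a constant but exploiting the smoothing: interpolation and Sobolev give $\|\rho\|_{L^3}^2\lesssim\|\rho\|_{L^2}\|\rho\|_{L^6}\lesssim\|\rho^0\|_{L^2}\|\nabla\rho\|_{L^2}$, so that, using $\|v(t)\|_{L^2}\le C_0(1+t)$ from Proposition~\ref{prop a1}(c), Cauchy--Schwarz in time, and $\|\nabla\rho\|_{L^2_tL^2}\lesssim\|\rho^0\|_{L^2}$ from Proposition~\ref{prop a1}(b), one gets $\int_0^t\|v\|_{L^2}^2\|\rho\|_{L^3}^2\,d\tau\lesssim C_0(1+t)^2t^{1/2}\lesssim C_0(1+t^{5/2})$. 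Hence $\|W\|_{L^\infty_tL^2}^2+\|W\|_{L^2_t\dot H^1}^2\lesssim C_0(1+t^{5/2})$, i.e. the claimed $C_0(1+t^{5/4})$; the extra $t^{1/2}$ over the naive estimate is exactly what parabolic regularity of $\rho$ buys.

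For part $b)$ I set $U=|x_h|^2\rho$ and repeat the scheme; since $\nabla|x_h|^2=2x_h$ and $\Delta|x_h|^2=4$, the weighted density solves
\[
\partial_tU+v\cdot\nabla U-\Delta U=2\rho\,(v\cdot x_h)-4\,x_h\cdot\nabla\rho-4\rho .
\]
The energy estimate again splits into a derivative piece and a velocity piece. The derivative piece integrates by parts (now $\textnormal{div}(|x_h|^2x_h)=4|x_h|^2$) into a multiple of $\|x_h\rho\|_{L^2}^2$, which is precisely the first moment controlled in part $a)$; the velocity piece is handled by Hölder $(6,3,2)$ as $\|U\|_{L^6}\,\|x_h\rho\|_{L^3}\,\|v\|_{L^2}$, spending the diffusion on $\|U\|_{L^6}\lesssim\|\nabla U\|_{L^2}$ and invoking part $a)$ (via interpolation of $\|x_h\rho\|_{L^3}$ between $L^2$ and $\dot H^1$) to estimate the middle factor. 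Thus part $b)$ is a genuine bootstrap on part $a)$, and the time integration delivers $C_0(1+t^{5/2})$.

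The main obstacle, in both parts, is the velocity term $\int\rho^2(v\cdot x_h)$: with no control on $v$ beyond the $L^2$ energy bound there is no way to treat it perturbatively, and the only route is to trade it against the heat smoothing $\|\nabla(\cdot)\|_{L^2}^2$ through Young's inequality, the precise exponents emerging from balancing the interpolation weights against the linear-in-time growth of $\|v(t)\|_{L^2}$ and the time-integrability of $\|\nabla\rho\|_{L^2}$. As the remark after the statement already signals, the axisymmetry plays no role here; only the energy estimates of Proposition~\ref{prop a1} enter.
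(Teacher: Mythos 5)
Your argument is correct and, for part $b)$, follows essentially the same route as the paper: the weighted energy estimate for $|x_h|^2\rho$, H\"older with exponents $(2,3,6)$, the Sobolev embedding $\dot H^1\hookrightarrow L^6$ to spend the diffusion on the velocity term, the Gagliardo--Nirenberg bound $\Vert x_h\rho\Vert_{L^3}^2\lesssim\Vert x_h\rho\Vert_{L^2}\Vert\nabla(x_h\rho)\Vert_{L^2}$, and the bootstrap on part $a)$ together with Proposition \ref{prop a1}. The only difference is that you also supply a proof of part $a)$ by the same scheme, whereas the paper simply quotes it from Hmidi--Rousset \cite{tf010}; your version of $a)$ checks out and yields the stated $C_0(1+t^{5/4})$ growth.
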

\begin{proof}
The estimate of $a)$ is proved by Hmidi and Rousset in \cite{tf010}, then for the estimate of $b)$ we first have 
the moment $\vert x_{h}\vert^{2}\rho$ solves the equation
\begin{equation}\label{66}
\partial_{t}(\vert x_{h}\vert^{2}\rho)+v\cdot\nabla(\vert x_{h}\vert^{2}\rho)-\Delta(\vert x_{h}\vert^{2}\rho)=2v^{h}(x_{h}\rho)-2\nabla_{h}\rho-4\textnormal{div}_{h}(x_{h}\rho),
\end{equation}
with $v^{h}=(v^1,v^2)$ and $\nabla_{h}=(\partial_1,\partial_2).$ Taking the $L^2$ inner product with $\vert x_{h}\vert^{2}\rho,$ integrating by parts and using H\"older inequality, 
\begin{eqnarray*}
\frac{1}{2}\frac{d}{dt}\Vert(\vert x_{h}\vert^{2}\rho)\Vert^{2}_{L^2}+\Vert\nabla(\vert x_{h}\vert^{2}\rho)\Vert^{2}_{L^2}&=& 2\int_{\RR^{3}} v^{h} (x_{h}\rho)(\vert x_{h}\vert^{2}\rho) dx-2\int_{\RR^{3}}\nabla_{h}\rho (\vert x_{h}\vert^{2}\rho) dx\\
&-& 4\int_{\RR^{3}}\textnormal{div}_{h}(x_{h}\rho)(\vert x_{h}\vert^{2}\rho) dx\\
&\le& 2\Vert v \Vert_{L^2}\Vert x_{h}\rho\Vert_{L^3}\Vert\vert x_{h}\vert^{2}\rho\Vert_{L^6}\\
&+& (2\Vert\rho\Vert_{L^2}+4\Vert x_{h}\rho\Vert_{L^2})\Vert\nabla(\vert x_{h}\vert^{2}\rho)\Vert_{L^2}.
\end{eqnarray*}
Using the embedding $\dot{H}^{1}\hookrightarrow L^{6}$ with Young inequality,
$$\frac{d}{dt}\Vert(\vert x_{h}\vert^{2}\rho)\Vert^{2}_{L^2}+c\Vert\nabla(\vert x_{h}\vert^{2}\rho)\Vert^{2}_{L^2}\lesssim\Vert v \Vert^{2}_{L^2}\Vert x_{h}\rho\Vert^{2}_{L^3}+\Vert\rho\Vert^{2}_{L^2}+\Vert x_{h}\rho\Vert^{2}_{L^2}.$$
Using now the Gagliardo-Nirenberg inequality,
$$\Vert f\Vert^{2}_{L^3}\le \Vert f \Vert_{L^2}\Vert\nabla f \Vert_{L^2},$$ 
we get that,
$$\frac{d}{dt}\Vert(\vert x_{h}\vert^{2}\rho)\Vert^{2}_{L^2}+c\Vert\nabla(\vert x_{h}\vert^{2}\rho)\Vert^{2}_{L^2}\lesssim \Vert v \Vert^{2}_{L^2}\Vert x_{h}\rho\Vert_{L^2}\Vert\nabla(x_{h}\rho)\Vert_{L^2}+\Vert\rho\Vert^{2}_{L^2}+\Vert x_{h}\rho\Vert^{2}_{L^2}.$$
Integrating in time and using the estimate $a)$ for $x_{h}\rho$ and Proposition \ref{prop a1}, we obtain
\begin{eqnarray*}
\Vert(\vert x_{h}\vert^{2}\rho)\Vert^{2}_{L^2}+c\int_{0}^{t}\Vert\nabla(\vert x_{h}\vert^{2}\rho)\Vert^{2}_{L^2}d\tau&\lesssim& \Vert(\vert x_{h}\vert^{2}\rho^{0})\Vert^{2}_{L^2}+\Vert v \Vert^{2}_{L_{t}^{\infty}L^2}\Vert x_{h}\rho\Vert_{L_{t}^{\infty}L^2}t^{\frac{1}{2}}\Vert\nabla(x_{h}\rho)\Vert_{L_{t}^{2}L^2}\\
&+& \Vert\rho^{0}\Vert^{2}_{L^2}t+\Vert x_{h}\rho\Vert^{2}_{L_{t}^{\infty}L^2}t\\
&\lesssim& C_{0}(1+t^{5}).
\end{eqnarray*}
Then we obtain
$$\Vert\vert x_{h}\vert^{2}\rho\Vert_{L_{t}^{\infty}L^2}+\Vert\vert x_{h}\vert^{2}\rho\Vert_{L_{t}^{2}\dot{H}^{1}}\lesssim C_{0}(1+t^{\frac{5}{2}}).$$
This ends the proof of the proposition.
\end{proof}
We give now an a priori bound of the $L^\infty$-norm of $\frac{v^r}{r}$, see Proposition 4.4 in \cite{tf010} for the proof. 
\begin{prop}\label{prop a2} Let $v^0$ be a smooth axisymmetric vector field with zero divergence such that $v^0\in L^2$, its vorticity such that $\frac{\omega^0}{r}\in L^{3,1}$ and $\rho^0 \in L^{2}\cap L^{p}$ for $6<p$ axisymmetric such that $\vert x_{h}\vert^{2}\rho^{0}\in L^2$ with $x_h=(x_1,x_2).$ Then we have for every $t \in \RR_+$ 
$$\Vert\frac{\omega(t)}{r}\Vert_{L^{3,1}}+\Vert \frac{v^r(t)}{r}\Vert_{L^{\infty}} \le \Phi_{2}(t),$$
where $C_0$ is a constant depending on the norm of the initial data and recall that
$$\Phi_{l}(t)=C_{0}\underbrace{\exp(...\exp}_{l-times}(C_{0} t^{\frac{19}{6}})...).$$
\end{prop}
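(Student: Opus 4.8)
The plan is to follow the desingularization strategy of Hmidi--Rousset \cite{tf010}: the forcing $-\partial_r\rho/r$ in \eqref{h} is singular on the axis, but it can be absorbed into a good unknown. Introduce the zero-order operator $\mathcal{R}:=\frac{\partial_r}{r}\Delta^{-1}$, so that $\mathcal{R}\Delta\rho=\frac{\partial_r\rho}{r}$, and set
$$\Gamma:=\frac{\omega_\theta}{r}+\mathcal{R}\rho.$$
Using \eqref{h} together with the density equation $\partial_t\rho=\Delta\rho-v\cdot\nabla\rho$, a direct computation shows that the two singular contributions cancel and that $\Gamma$ solves the forced transport equation
$$(\partial_t+v\cdot\nabla)\Gamma=[v\cdot\nabla,\mathcal{R}]\rho.$$
The gain is that the right-hand side is the commutator of a first-order transport operator with a zero-order operator, hence is itself of order zero and no longer carries the $1/r$ singularity.

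Before estimating $\Gamma$, I would record the two structural facts that keep the scheme self-contained at this stage. First, $\mathcal{R}$ is a Calder\'on--Zygmund operator and is therefore bounded on $L^{3,1}$; since $2<3<p$ we have the interpolation embedding $L^2\cap L^p\hookrightarrow L^{3,1}$, so Proposition \ref{prop a1}$(a)$ gives $\|\mathcal{R}\rho(t)\|_{L^{3,1}}\lesssim\|\rho^0\|_{L^2\cap L^p}$ uniformly in time. Thus $\|\omega/r\|_{L^{3,1}}$ and $\|\Gamma\|_{L^{3,1}}$ differ only by a constant depending on the data. Second, the axisymmetric Biot--Savart estimate of Shirota--Yanagisawa \cite{sy94} and Danchin \cite{d07} yields $\|v^r/r\|_{L^\infty}\lesssim\|\omega/r\|_{L^{3,1}}$ up to lower-order energy terms. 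Together these reduce the whole proposition to a bound on $\|\Gamma(t)\|_{L^{3,1}}$.

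Next I would transport-estimate $\Gamma$. Because the flow of the divergence-free field $v$ preserves Lebesgue measure, every $L^q$ norm is transported (Lemma \ref{m}), and real interpolation upgrades this to the endpoint Lorentz bound
$$\|\Gamma(t)\|_{L^{3,1}}\le\|\Gamma^0\|_{L^{3,1}}+\int_0^t\big\|[v\cdot\nabla,\mathcal{R}]\rho(\tau)\big\|_{L^{3,1}}\,d\tau.$$
Everything now hinges on the commutator estimate, which I expect to be the main obstacle. The crucial constraint is that at this stage neither $\|\nabla v\|_{L^\infty}$ nor $\|\omega\|_{L^\infty}$ is available---these are obtained only later, in Propositions \ref{prop a3}--\ref{prop a4}---so the commutator must be controlled by quantities already in hand. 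Here the precise axisymmetric structure of the kernel of $\mathcal{R}$ is decisive: exploiting its decay one bounds the nonlocal commutator by the energy norm $\|v\|_{L^2}$ (which grows like $t$ by Proposition \ref{prop a1}$(c)$), by the weighted moments $\|x_h\rho\|_{L^2}$ and $\||x_h|^2\rho\|_{L^2}$ of Proposition \ref{p1} (which grow at most like $t^{5/2}$), and by the feedback quantity $\|v^r/r\|_{L^\infty}$.

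Finally I would close the loop. Substituting the Biot--Savart bound $\|v^r/r\|_{L^\infty}\lesssim\|\Gamma\|_{L^{3,1}}+C_0(1+t)$ into the commutator estimate turns the transport inequality into a Gronwall inequality whose coefficients are polynomial in $t$, with degrees fixed by the $t^{5/2}$ growth of the moments and the linear growth of the energy. Integrating produces an exponential in a power of $t$; a careful treatment of the endpoint space $L^{3,1}$, which forces an additional logarithmic interpolation in the commutator bound, then accounts for the iterated-exponential structure and yields $\|\omega/r\|_{L^{3,1}}+\|v^r/r\|_{L^\infty}\le\Phi_2(t)$ with the stated rate $t^{19/6}$. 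The genuinely hard point throughout is the commutator bound: extracting enough decay from the kernel of $\mathcal{R}$ so that the moment norms of $\rho$ suffice, and doing so directly in $L^{3,1}$ rather than in a plain Lebesgue space.
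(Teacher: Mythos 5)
The paper does not actually prove Proposition \ref{prop a2}: it invokes it as Proposition 4.4 of Hmidi--Rousset \cite{tf010}, and the only place the good unknown $\Gamma=\frac{\omega_\theta}{r}+\frac{\partial_r}{r}\Delta^{-1}\rho$ appears here is in the introduction, precisely as a description of that cited argument. Your sketch therefore reconstructs the right architecture: the algebra giving $(\partial_t+v\cdot\nabla)\Gamma=[v\cdot\nabla,\mathcal{R}]\rho$ is correct (the singular terms $\pm\frac{\partial_r\rho}{r}$ do cancel), the boundedness of $\mathcal{R}$ on $L^{3,1}$, the embedding $L^2\cap L^p\hookrightarrow L^{3,1}$ for $p>3$, the axisymmetric Biot--Savart bound $\Vert v^r/r\Vert_{L^\infty}\lesssim\Vert\omega_\theta/r\Vert_{L^{3,1}}$, and the transport of Lorentz norms by a measure-preserving flow are all legitimate reductions, and they correctly reduce the proposition to an estimate of $\Vert[v\cdot\nabla,\mathcal{R}]\rho\Vert_{L^{3,1}}$ followed by Gronwall.

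The genuine gap is that this commutator estimate, which you yourself identify as ``the main obstacle,'' is never actually carried out: it is asserted that ``exploiting the decay of the kernel'' yields a bound in terms of $\Vert v\Vert_{L^2}$, the moments $\Vert x_h\rho\Vert_{L^2}$, $\Vert\vert x_h\vert^2\rho\Vert_{L^2}$ and $\Vert v^r/r\Vert_{L^\infty}$, but no argument is given. This is the entire technical content of the proposition; everything else in your outline is routine. In \cite{tf010} this step is a separate, several-page commutator proposition: one must write $v\cdot\nabla=\operatorname{div}(v\,\cdot)$ using incompressibility, split the kernel of $\mathcal{R}$ into a diagonal part (handled by Calder\'on--Zygmund/paraproduct arguments, which is where the axisymmetric structure and $\Vert v^r/r\Vert_{L^\infty}$ enter) and an off-diagonal part whose insufficient decay is exactly what forces the hypothesis $\vert x_h\vert^2\rho^0\in L^2$ and the moment estimates of Proposition \ref{p1}. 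Without producing this estimate, the Gronwall loop cannot be closed, and the quantitative bookkeeping you appeal to at the end (the exponent $\frac{19}{6}$ and the doubly exponential $\Phi_2$) is likewise asserted rather than derived --- in particular your attribution of the second exponential to ``a logarithmic interpolation in $L^{3,1}$'' is not substantiated. So the proposal is a correct road map of the cited proof, but as a standalone proof it is incomplete at its central step.
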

Let us now show how to use this to give an $L^\infty$- bound of the vorticity,
\begin{prop}\label{prop a3} Under the same assumptions of Proposition \ref{prop a2}, and if in addition $\omega^0 \in L^{\infty}.$ Then we have for every $t \in \RR_+,$
$$\Vert\omega(t)\Vert_{L^\infty}+\Vert\nabla\rho(t)\Vert_{L_{t}^{1} L^\infty}\lesssim \Phi_{4}(t).$$
\end{prop}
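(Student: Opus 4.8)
The plan is to close a Gronwall loop coupling $\Vert\omega(t)\Vert_{L^\infty}$ with $\Vert\nabla\rho\Vert_{L^1_tL^\infty}$. The $L^\infty$ bound of the vorticity will come from the maximum principle applied to its transport equation, while $\Vert\nabla\rho\Vert_{L^1_tL^\infty}$ will be produced by the smoothing effect of the heat part of the density equation. The decisive structural observation is that at this point we control $\Vert\omega\Vert_{L^\infty}$ (via Proposition \ref{prop a2}) but not yet the full Lipschitz norm $\Vert\nabla v\Vert_{L^\infty}$; for this reason I would use the non-Lipschitzian smoothing estimate of Proposition \ref{prop3}, which asks only for $\int_0^t\Vert\omega\Vert_{L^\infty}$ and the benign low-frequency quantity $\int_0^t\Vert\nabla\Delta_{-1}v\Vert_{L^\infty}$, rather than the Lipschitzian version \eqref{0x}.

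First I would apply the maximum principle (Lemma \ref{m} with $\kappa=0$) to the scalar vorticity equation $\partial_t\omega_\theta+v\cdot\nabla\omega_\theta=\frac{v^r}{r}\omega_\theta-\partial_r\rho$. Since $|\partial_r\rho|\le|\nabla\rho|$ and $\Vert\omega\Vert_{L^\infty}=\Vert\omega_\theta\Vert_{L^\infty}$, this gives
$$\Vert\omega(t)\Vert_{L^\infty}\le\Vert\omega^0\Vert_{L^\infty}+\int_0^t\Big\Vert\frac{v^r}{r}\Big\Vert_{L^\infty}\Vert\omega(\tau)\Vert_{L^\infty}\,d\tau+\Vert\nabla\rho\Vert_{L^1_tL^\infty}.$$
Proposition \ref{prop a2} already furnishes $\Vert\frac{v^r}{r}\Vert_{L^\infty}\le\Phi_2(t)$, so the only term left to estimate is $\Vert\nabla\rho\Vert_{L^1_tL^\infty}$.

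Second I would bound $\Vert\nabla\rho\Vert_{L^1_tL^\infty}$ by a Littlewood--Paley decomposition $\nabla\rho=\sum_{j\ge-1}\Delta_j\nabla\rho$. For $j=-1$, Bernstein (Lemma \ref{Bernstein}) and the maximum principle control the piece by $t\Vert\rho^0\Vert_{L^p}$. For $j\ge0$, I would combine $\Vert\Delta_j\nabla\rho\Vert_{L^\infty}\lesssim 2^{j(1+\frac{3}{p})}\Vert\Delta_j\rho\Vert_{L^p}$ with the smoothing bound of Proposition \ref{prop3} applied to $\rho$ (which solves \eqref{dd} with $\kappa=1$ and $g=0$), giving
$$\int_0^t\Vert\Delta_j\nabla\rho(\tau)\Vert_{L^\infty}\,d\tau\lesssim 2^{j(\frac{3}{p}-1)}\Vert\rho^0\Vert_{L^p}\Big(1+(j+1)\int_0^t\Vert\omega\Vert_{L^\infty}\,d\tau+\int_0^t\Vert\nabla\Delta_{-1}v\Vert_{L^\infty}\,d\tau\Big).$$
Because $p>6>3$, the exponent $\frac{3}{p}-1$ is negative and the series $\sum_{j\ge0}2^{j(\frac{3}{p}-1)}(j+1)$ converges; bounding the low-frequency velocity term by $\int_0^t\Vert v\Vert_{L^2}\,d\tau\lesssim C_0(t+t^2)$ (Bernstein together with Proposition \ref{prop a1}$(c)$) then produces
$$\Vert\nabla\rho\Vert_{L^1_tL^\infty}\lesssim\Vert\rho^0\Vert_{L^p}\Big(1+C_0(t+t^2)+\int_0^t\Vert\omega(\tau)\Vert_{L^\infty}\,d\tau\Big)\le\Phi_2(t)+\Phi_2(t)\int_0^t\Vert\omega\Vert_{L^\infty}\,d\tau.$$

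Finally I would insert this into the vorticity inequality and absorb the constants and polynomial powers of $t$ into $\Phi_2$, reaching a linear integral inequality
$$\Vert\omega(t)\Vert_{L^\infty}\le\Phi_2(t)+\int_0^t\Phi_2(\tau)\Vert\omega(\tau)\Vert_{L^\infty}\,d\tau.$$
Gronwall's lemma, with the elementary facts $\int_0^t\Phi_2\le\Phi_2(t)$ and $\exp(\int_0^t\Phi_2)\le\Phi_3(t)$ from the preliminaries, yields $\Vert\omega(t)\Vert_{L^\infty}\le\Phi_3(t)$; reinjecting this into the $\nabla\rho$ estimate gives a bound of the same type, and summing the two contributions leaves room only for the harmless extra exponential, so that $\Vert\omega(t)\Vert_{L^\infty}+\Vert\nabla\rho\Vert_{L^1_tL^\infty}\lesssim\Phi_4(t)$. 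I expect the main obstacle to be exactly this circular coupling: the smoothing estimate reintroduces $\int_0^t\Vert\omega\Vert_{L^\infty}$ into the vorticity bound, and the whole scheme works only because that feedback is \emph{linear} in $\Vert\omega\Vert_{L^\infty}$, which is why Proposition \ref{prop3} (requiring merely $\Vert\omega\Vert_{L^\infty}$) must be used in place of any estimate demanding $\Vert\nabla v\Vert_{L^\infty}$.
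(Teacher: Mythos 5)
Your proposal is correct and follows essentially the same route as the paper: maximum principle on the vorticity equation using the $\Phi_2$ bound on $\Vert\frac{v^r}{r}\Vert_{L^\infty}$ from Proposition \ref{prop a2}, then the non-Lipschitzian smoothing effect of Proposition \ref{prop3} combined with Bernstein (exploiting $p>6>3$ so that $\sum_j 2^{j(\frac{3}{p}-1)}(j+1)$ converges) to control $\Vert\nabla\rho\Vert_{L^1_tL^\infty}$ linearly in $\int_0^t\Vert\omega\Vert_{L^\infty}$, and finally Gronwall to close the loop. The only cosmetic differences are that the paper estimates $\Vert\rho\Vert_{L^1_tB^1_{\infty,1}}$ and uses the embedding $B^1_{\infty,1}\hookrightarrow Lip$ rather than attacking $\nabla\rho$ directly, and performs the Gronwall step in two stages instead of one.
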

\begin{proof} Recall that the vorticity $\omega$ satisfies the equation
$$\partial_{t}\omega+v\cdot\nabla\omega=\frac {v^r}{r}\omega+curl(\rho e_{z}).$$
 Applying the maximum principle to the above equation and using Proposition \ref{prop a2},
\begin{eqnarray*}
\Vert\omega(t)\Vert_{L^\infty}&\le& \Vert\omega^0\Vert_{L^\infty}+\int_0^t \Vert \frac{v^r(\tau)}{r}\Vert_{L^{\infty}}\Vert\omega(\tau)\Vert_{L^\infty} d\tau+ \int_0^t \Vert curl(\rho e_z)(\tau) \Vert_{L^\infty}d\tau\\
&\le&  \Vert\omega^0\Vert_{L^\infty}+ \int_{0}^{t} \Phi_{2}(\tau) \Vert\omega(\tau)\Vert_{L^\infty} d\tau+ \int_0^t \Vert\nabla\rho(\tau)\Vert_{L^\infty} d\tau. 
\end{eqnarray*}
This implies by Gronwall inequality,
\begin{eqnarray}\label{rf}
\nonumber \Vert\omega(t)\Vert_{L^\infty}&\le& \bigg(\Vert\omega^0\Vert_{L^\infty}+ \int_0^t \Vert\nabla\rho(\tau)\Vert_{L^\infty} d\tau\bigg)\Phi_{3}(t)\\
&\lesssim& \bigg(\Vert\omega^0\Vert_{L^\infty}+ \int_0^t \Vert\rho(\tau)\Vert_{B^{1}_{\infty,1}} d\tau\bigg)\Phi_{3}(t),
\end{eqnarray}
where we have used in the last line the Besov embedding $B^{1}_{\infty,1}\hookrightarrow Lip.$ It remain then to estimate $\Vert\rho(t)\Vert_{L_t^1B^{1}_{\infty,1}}.$ For this purpose we use Bernstein inequality for $p>3$ and Propositions \ref{prop a1} and \ref{prop3}, we obtain
\begin{eqnarray}\label{x1}
\nonumber \Vert\rho(t)\Vert_{L_t^1B^{1}_{\infty,1}}&\lesssim& \Vert\Delta_{-1}\rho(t)\Vert_{L_t^1L^\infty}+\sum_{j\ge 0}2^{j}\Vert \Delta_{j}\rho(t)\Vert_{L_t^1L^\infty}\\
\nonumber &\lesssim&  \Vert\rho(t)\Vert_{L_t^1L^2}+\sum_{j\ge0}2^{j(\frac{3}{p}+1)}\Vert \Delta_{j}\rho(t)\Vert_{L_t^1L^p}\\
\nonumber &\lesssim&  \Vert\rho^{0}\Vert_{L^2}t+\sum_{j\ge0}2^{j(\frac{3}{p}-1)}\Vert\rho^0 \Vert_{L^p}\bigg(1+(j+1) \int_0^t \Vert\omega(\tau)\Vert_{L^\infty} d\tau\bigg)\\
\nonumber &+& \sum_{j\ge0}2^{j(\frac{3}{p}-1)}\Vert\rho^0 \Vert_{L^p} \int_0^t \Vert\nabla\Delta_{-1} v(\tau)\Vert_{L^\infty} d\tau\\
\nonumber &\lesssim& \Vert\rho^{0}\Vert_{L^2}t+\Vert\rho^0 \Vert_{L^p}\Big(1+\int_0^t \Vert\omega(\tau)\Vert_{L^\infty} d\tau+\int_0^t \Vert v(\tau)\Vert_{L^2} d\tau\Big)\\
\nonumber &\lesssim& \Vert\rho^{0}\Vert_{L^2}t+\Vert\rho^0 \Vert_{L^p}\Big(1+C_{0}t(1+t)+\int_0^t \Vert\omega(\tau)\Vert_{L^\infty} d\tau\Big)\\
&\lesssim& C_{0}\Big(1+t^{2}+\int_0^t \Vert\omega(\tau)\Vert_{L^\infty}d\tau\Big).
\end{eqnarray}
Plugging \eqref{x1} into \eqref{rf} and using Gronwall's inequality, we obtain
\begin{eqnarray*}
\Vert\omega(t)\Vert_{L^\infty}&\lesssim& \bigg(\Vert\omega^0\Vert_{L^\infty}+C_0 (1+t^2+\int_0^t \Vert\omega(\tau)\Vert_{L^\infty} d\tau)\bigg)\Phi_{3}(t)\\
&\lesssim& \Phi_{4}(t).
\end{eqnarray*}
Putting this estimate in \eqref{x1} yields
\begin{equation}\label{dh}
\Vert\rho(t)\Vert_{L_{t}^{1}B^{1}_{\infty,1}}\lesssim \Phi_{4}(t).
\end{equation}
This gives in view of Besov embedding,
$$\Vert\nabla\rho(t)\Vert_{L_{t}^{1} L^\infty}\lesssim \Phi_{4}(t),$$
which is the desired result.
\end{proof}
\subsubsection{Lipschitz estimate of the velocity} We will prove in the following proposition a new decomposition of the vorticity which is the basic tool to get a bound of the velocity.
\begin{prop}\label{prop a4} There exists a decomposition $(\widetilde{\omega}_{j})_{j\ge-1}$ of $\omega$ such that 
\begin{enumerate}
\item $\omega(t,x)=\displaystyle \sum_{j \ge -1}\widetilde{\omega}_{j}(t,x)\;,\,\forall t \in\RR_+.$\\
\item $\textnormal{div}\;\widetilde{\omega}_{j}(t,x)=0.$\\
\item $\Vert\widetilde{\omega}_{j}(t)\Vert_{L^\infty}\le C (\Vert\Delta_{j}\omega^{0}\Vert_{L^\infty}+2^{j}\Vert\Delta_{j}\rho(t)\Vert_{L_{t}^{1}L^\infty})\Phi_{3}(t)\;,\;\forall j\ge -1.$\\
\item $\forall k, j \ge-1$ we have
$$\Vert\Delta_{k}\widetilde{\omega}_{j}(t)\Vert_{L^\infty}\lesssim 2^{-\vert k-j \vert}e^{C V(t)}(\Vert\Delta_{j}\omega^{0}\Vert_{L^\infty}+ 2^{j}\Vert\Delta_{j}\rho(t)\Vert_{L_{t}^{1}L^\infty}),$$
with $V(t):=\Vert v(t) \Vert_{L_t^1 B_{\infty,1}^{1}}.$
\end{enumerate}
\end{prop}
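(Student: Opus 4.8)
The plan is to build the pieces $\widetilde{\omega}_{j}$ as solutions of the \emph{linearized} vorticity equation fed with dyadically localized data and source. Concretely, for each $j\ge-1$ I would let $\widetilde{\omega}_{j}$ be the unique global solution of
\begin{equation*}
\partial_{t}\widetilde{\omega}_{j}+v\cdot\nabla\widetilde{\omega}_{j}=\widetilde{\omega}_{j}\cdot\nabla v+\Delta_{j}\,\mathrm{curl}(\rho e_{z}),\qquad \widetilde{\omega}_{j}{}_{|t=0}=\Delta_{j}\omega^{0}.
\end{equation*}
Since $\sum_{j\ge-1}\Delta_{j}\omega^{0}=\omega^{0}$ and $\sum_{j\ge-1}\Delta_{j}\mathrm{curl}(\rho e_{z})=\mathrm{curl}(\rho e_{z})$, linearity together with the uniqueness statement for \eqref{n} forces $\sum_{j\ge-1}\widetilde{\omega}_{j}=\omega$, which is item $(1)$.

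For the geometric items I would invoke Propositions~\ref{prop4} and~\ref{prop5}. Because $\omega^{0}=\mathrm{curl}\,v^{0}$ and $\Delta_{j}v^{0}$ is again axisymmetric, the field $\Delta_{j}\omega^{0}=\mathrm{curl}(\Delta_{j}v^{0})$ is divergence free and satisfies $\Delta_{j}\omega^{0}\times e_{\theta}=(0,0,0)$. Likewise $\Delta_{j}\rho$ is axisymmetric, so by \eqref{aa} the source $\Delta_{j}\,\mathrm{curl}(\rho e_{z})=-\partial_{r}(\Delta_{j}\rho)\,e_{\theta}$ is divergence free and carried by $e_{\theta}$. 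Hence Proposition~\ref{prop5}$(1)$ gives $\mathrm{div}\,\widetilde{\omega}_{j}(t)=0$, i.e. item $(2)$, while the proof of Proposition~\ref{prop5}$(2)$ applies verbatim (the source still has no $e_{r}$ or $e_{z}$ component), yielding $\widetilde{\omega}_{j}(t)\times e_{\theta}=(0,0,0)$ and therefore $\widetilde{\omega}_{j}\cdot\nabla v=\frac{v^{r}}{r}\widetilde{\omega}_{j}$. Item $(3)$ then follows by reading the equation in the form $\partial_{t}\widetilde{\omega}_{j}+v\cdot\nabla\widetilde{\omega}_{j}=\frac{v^{r}}{r}\widetilde{\omega}_{j}+\Delta_{j}\mathrm{curl}(\rho e_{z})$ and applying the maximum principle of Lemma~\ref{m}:
\begin{equation*}
\|\widetilde{\omega}_{j}(t)\|_{L^{\infty}}\le \|\Delta_{j}\omega^{0}\|_{L^{\infty}}+\int_{0}^{t}\Big\|\tfrac{v^{r}}{r}\Big\|_{L^{\infty}}\|\widetilde{\omega}_{j}\|_{L^{\infty}}\,d\tau+\int_{0}^{t}\|\Delta_{j}\mathrm{curl}(\rho e_{z})\|_{L^{\infty}}\,d\tau.
\end{equation*}
Bernstein's inequality (Lemma~\ref{Bernstein}) bounds the last integrand by $2^{j}\|\Delta_{j}\rho\|_{L^{\infty}}$, and Gronwall together with $\|v^{r}/r\|_{L^{\infty}}\le\Phi_{2}$ from Proposition~\ref{prop a2} and the elementary fact $\exp(\int_{0}^{t}\Phi_{2})\le\Phi_{3}$ produces exactly $(3)$.

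The heart of the proposition is the off-diagonal bound $(4)$, and this is where I expect the real work. I would localize the transport equation for $\widetilde{\omega}_{j}$ by applying $\Delta_{k}$, which gives
\begin{equation*}
\partial_{t}\Delta_{k}\widetilde{\omega}_{j}+v\cdot\nabla\Delta_{k}\widetilde{\omega}_{j}=-[\Delta_{k},v\cdot\nabla]\widetilde{\omega}_{j}+\Delta_{k}(\widetilde{\omega}_{j}\cdot\nabla v)+\Delta_{k}\Delta_{j}\mathrm{curl}(\rho e_{z}),
\end{equation*}
and then use the $L^{\infty}$ estimate along the flow. The data term $\Delta_{k}\Delta_{j}\omega^{0}$ and the density term $\Delta_{k}\Delta_{j}\mathrm{curl}(\rho e_{z})$ are spectrally supported in $|k-j|\le1$, hence carry the weight $2^{-|k-j|}$ trivially, with the right size $\|\Delta_{j}\omega^{0}\|_{L^{\infty}}+2^{j}\|\Delta_{j}\rho\|_{L_{t}^{1}L^{\infty}}$. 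The decisive point is to show that the commutator $[\Delta_{k},v\cdot\nabla]\widetilde{\omega}_{j}$ and the product $\Delta_{k}(\widetilde{\omega}_{j}\cdot\nabla v)$ both gain the factor $2^{-|k-j|}$; for this I would run Bony's decomposition \eqref{j} and the standard transport commutator estimates, \emph{crucially keeping the stretching in the form} $\widetilde{\omega}_{j}\cdot\nabla v$ so that its coefficient is $\|\nabla v\|_{L^{\infty}}\lesssim\|v\|_{B^{1}_{\infty,1}}$ rather than $\|v^{r}/r\|_{L^{\infty}}$. Introducing the weighted quantities $2^{|k-j|}\|\Delta_{k}\widetilde{\omega}_{j}\|_{L^{\infty}}$ and taking the supremum over $k$, as in \cite{ahs08,vis98}, a convolution (Young) inequality collapses these contributions into $\int_{0}^{t}\|v\|_{B^{1}_{\infty,1}}(\cdots)\,d\tau$, and Gronwall closes the estimate with the transport factor $e^{CV(t)}$.

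The main obstacle, as anticipated, is precisely the bookkeeping in $(4)$: extracting genuine off-diagonal decay $2^{-|k-j|}$, uniform in time, from the commutator and paraproduct terms, while making sure the amplification factor is the transport exponential $e^{CV(t)}$ and \emph{not} the much larger $\Phi_{3}$. This is what forces one to treat the vortex-stretching through $\nabla v\in B^{0}_{\infty,1}$ instead of through the pointwise bound on $v^{r}/r$, and to reorganize all frequency-off-diagonal interactions into a summable kernel before invoking Gronwall.
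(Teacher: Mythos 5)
Your construction of the family $(\widetilde{\omega}_{j})$ and your treatment of items $(1)$--$(3)$ coincide with the paper's: same localized vorticity-like equation, same appeal to Propositions~\ref{prop4} and \ref{prop5}, same maximum-principle/Gronwall argument with $\Vert v^{r}/r\Vert_{L^\infty}\le\Phi_2$. The gap is in item $(4)$, and it is not mere bookkeeping. The bound $2^{-|k-j|}$ splits into two one-sided estimates: the low-frequency side $k\le j$ amounts to a $B^{-1}_{\infty,\infty}$ bound on $\widetilde{\omega}_{j}$, and there your plan works — Bony's decomposition with the divergence-free structure gives $\Vert\widetilde{\omega}_{j}\cdot\nabla v\Vert_{B^{-1}_{\infty,\infty}}\lesssim\Vert v\Vert_{B^{1}_{\infty,1}}\Vert\widetilde{\omega}_{j}\Vert_{B^{-1}_{\infty,\infty}}$ and Gronwall yields the factor $e^{CV(t)}$, exactly as in the paper. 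But the high-frequency side $k\ge j$ amounts to a $B^{1}_{\infty,1}$ bound on $\widetilde{\omega}_{j}$, and there your prescription --- ``keep the stretching in the form $\widetilde{\omega}_{j}\cdot\nabla v$ with coefficient $\Vert\nabla v\Vert_{L^\infty}$ and run standard paraproduct/commutator estimates'' --- fails. At regularity $s=1$ the paraproduct $T_{\widetilde{\omega}_{j}}\nabla v$ is controlled by $\Vert\widetilde{\omega}_{j}\Vert_{L^\infty}\Vert\nabla v\Vert_{B^{1}_{\infty,1}}=\Vert\widetilde{\omega}_{j}\Vert_{L^\infty}\Vert v\Vert_{B^{2}_{\infty,1}}$, one full derivative more than the $B^{1}_{\infty,1}$ information on $v$ that the Gronwall loop can tolerate; no generic product or commutator estimate closes this.

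This is precisely where the paper (following \cite{ahs08}) is forced to do the opposite of what you propose: it rewrites the stretching componentwise as $v^{2}\,\widetilde{\omega}_{j}^{1}/x_{2}$, uses the axisymmetric vanishing $S_{k-1}v^{2}(x_1,0,z)=0$ and Taylor's formula to get $\Vert S_{k-1}v^{2}/x_{2}\Vert_{L^\infty}\lesssim\Vert\nabla v\Vert_{L^\infty}$, and thereby trades the dangerous paraproduct for $\Vert\nabla v\Vert_{L^\infty}\bigl(\Vert\widetilde{\omega}_{j}^{1}\Vert_{B^{1}_{\infty,1}}+\Vert\widetilde{\omega}_{j}^{1}/x_{2}\Vert_{B^{0}_{\infty,1}}\bigr)$. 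This in turn requires an auxiliary transport equation for $\widetilde{\omega}_{j}^{1}/x_{2}$, whose data and source terms ($\Delta_{j}\omega^{1}_{0}/x_{2}$ and $\partial_{2}\Delta_{j}\rho/x_{2}$) are controlled only because they vanish on $\{x_{2}=0\}$ by axisymmetry and because of the logarithmic dilation estimate of Proposition~\ref{prop a5}. None of these ingredients appears in your outline, and your explicit suggestion to avoid the pointwise structure of $v^{r}/r$ in favor of $\nabla v\in B^{0}_{\infty,1}$ removes the only known mechanism for closing the high-frequency half of $(4)$.
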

\begin{proof}
We will use for this purpose a new approach similar to \cite{ahs08}. We denote by $(\widetilde{\omega}_{j})_{j\ge-1}$ the unique global solution of the following equation,
\begin{equation}\label{rt}
\left\{
\begin{array}{ll} 
\partial_{t}\widetilde{\omega}_{j}+v\cdot\nabla\widetilde{\omega}_{j}=\widetilde{\omega}_{j}\cdot\nabla v+curl(\Delta_j\rho e_{z})\\ 
\widetilde{\omega}_{j}(t=0)=\Delta_j\omega^{0}
\end{array} \right.
\end{equation} 
Since $\textnormal{div}(\Delta_{j} \omega^0)=0$ then Proposition \ref{prop5}-1) implies that $$\textnormal{div}\,\widetilde{\omega}_{j}(t)=0.$$ 
By linearity and uniqueness we have
\begin{equation*}
\omega(t,x)=\sum_{j\ge-1}\widetilde{\omega}_{j}(t,x),
\end{equation*}
which is (1).\\
To prove (3) we apply Proposition \ref{prop5}-2) to the equation \eqref{rt}, since $\Delta_j\omega^0=curl \Delta_jv^0$ and $\Delta_jv^0$ is axisymmetric then Proposition \ref{prop4} gives $\Delta_j\omega^0\times e_\theta=(0,0,0)$, therefore $\widetilde{\omega}_{j}(t)\times e_\theta=(0,0,0)$ and
$$\partial_{t}\widetilde{\omega}_{j}+v\cdot\nabla\widetilde{\omega}_{j}= \widetilde{\omega}_{j}\frac{v^r}{r}+curl(\Delta_j\rho e_{z})$$ 
Now taking the maximum principle and using Propositions \ref{prop a2} and Lemma \ref{Bernstein},
\begin{eqnarray*}
\Vert\widetilde{\omega}_{j}(t)\Vert_{L^\infty}&\le&\Vert\Delta_j\omega^0\Vert_{L^\infty}+\int_0^t \Vert\frac{v^r(\tau)}{r}\Vert_{L^\infty}\Vert\widetilde{\omega}_{j}(\tau)\Vert_{L^\infty}d\tau+\int_0^t \Vert curl(\Delta_j\rho e_{z})(\tau)\Vert_{L^\infty}d\tau\\
&\lesssim& \Vert\Delta_j\omega^0\Vert_{L^\infty}+ \int_0^t \Phi_{2}(\tau)\Vert\widetilde{\omega}_{j}(\tau)\Vert_{L^\infty}d\tau+2^{j}\int_0^t \Vert \Delta_j\rho(\tau)\Vert_{L^\infty}d\tau.
\end{eqnarray*} 
Using Gronwall's inequality we obtain
\begin{equation*}
\Vert\widetilde{\omega}_{j}(t)\Vert_{L^\infty}\le C(\Vert\Delta_j\omega^0\Vert_{L^\infty}+2^{j}\Vert \Delta_j\rho(t)\Vert_{L_{t}^{1}L^\infty})\Phi_{3}(t).
\end{equation*}
This is the result (3).\\
Let us now prove (4). Notice that this estimate is equivalent to
\begin{equation}\label{t1}
\Vert\Delta_{k}\widetilde{\omega}_{j}(t)\Vert_{L^\infty}\lesssim 2^{k-j}e^{C V(t)}(\Vert\Delta_{j}\omega^{0}\Vert_{L^\infty} + 2^{j}\Vert \Delta_j\rho(t)\Vert_{L_{t}^{1}L^\infty})
\end{equation}
and
\begin{equation}\label{t2}
\Vert\Delta_{k}\widetilde{\omega}_{j}(t)\Vert_{L^\infty}\lesssim 2^{j-k}e^{C V(t)}(\Vert\Delta_{j}\omega^{0}\Vert_{L^\infty} + 2^{j}\Vert \Delta_j\rho(t)\Vert_{L_{t}^{1}L^\infty}).
\end{equation}
\underline{Proof of \eqref{t1}}. Applying Proposition \ref{prop2} to the equation \eqref{rt},
\begin{eqnarray}\label{y}
\nonumber e^{-CV(t)}\Vert\widetilde{\omega}_{j}(t)\Vert_{B^{-1}_{\infty,\infty}}&\lesssim& \Vert\Delta_{j}\omega^0\Vert_{B^{-1}_{\infty,\infty}}+ \int_0^t e^{-CV(\tau)}\Vert\widetilde{\omega}_{j}\cdot\nabla v(\tau)\Vert_{B^{-1}_{\infty,\infty}}d\tau\\
&+& \int_0^t e^{-CV(\tau)}\Vert curl (\Delta_{j}\rho e_{z})\Vert_{B^{-1}_{\infty,\infty}}d\tau,
\end{eqnarray}
where $V(t)=\int_{0}^{t}\Vert v(\tau)\Vert_{B^{1}_{\infty,1}}d\tau.$\\
To estimate the first integral, we use the decomposition of Bony \eqref{j}, then 
\begin{eqnarray*}
\Vert\widetilde{\omega}_{j}\cdot\nabla v\Vert_{B^{-1}_{\infty,\infty}}&\le& \Vert T_{\widetilde{\omega}_{j}}\cdot\nabla v\Vert_{B^{-1}_{\infty,\infty}}+\Vert T_{\nabla v}\cdot\widetilde{\omega}_{j}\Vert_{B^{-1}_{\infty,\infty}}\\
&+& \Vert \mathcal{R}(\widetilde{\omega}_{j},\nabla v)\Vert_{B^{-1}_{\infty,\infty}}.
\end{eqnarray*}
Now we write by definition
$$T_{\widetilde{\omega}_{j}}\cdot\nabla v=\sum_{q} S_{q-1}\widetilde{\omega}_{j}\Delta_{q}\nabla v,$$
then 
$$\Delta_{k}(T_{\widetilde{\omega}_{j}}\cdot\nabla v)=\sum_{\vert k-q \vert \le 4}\Delta_{k}(S_{q-1}\widetilde{\omega}_{j}\Delta_{q}\nabla v).$$
Applying H\"older and Bernstein inequalities leads to
\begin{eqnarray*}
\Vert T_{\widetilde{\omega}_{j}}\cdot\nabla v\Vert_{B^{-1}_{\infty,\infty}}&\le& \sup_{k} 2^{-k} \sum_{\vert k-q \vert \le 4}\Vert S_{q-1}\widetilde{\omega}_{j}\Vert_{L^\infty}\Vert\Delta_{q}\nabla v\Vert_{L^\infty}\\
&\lesssim& \Vert\nabla v\Vert_{L^\infty} \Vert\widetilde{\omega}_{j}\Vert_{B^{-1}_{\infty,\infty}}\\
&\lesssim& \Vert v \Vert_{B_{\infty,1}^1} \Vert\widetilde{\omega}_{j}\Vert_{B^{-1}_{\infty,\infty}},
\end{eqnarray*}
we have used in the last line the Besov emmbedding $B_{\infty,1}^{1} \hookrightarrow Lip(\RR^3).$\\
Similarly for $\Vert T_{\nabla v}\cdot\widetilde{\omega}_{j}\Vert_{B^{-1}_{\infty,\infty}},$ we get
$$\Vert T_{\nabla v}\cdot\widetilde{\omega}_{j}\Vert_{B^{-1}_{\infty,\infty}}\lesssim \Vert v \Vert_{B_{\infty,1}^1} \Vert\widetilde{\omega}_{j}\Vert_{B^{-1}_{\infty,\infty}}.$$
For the remainder term, since $\textnormal{div}\,\widetilde{\omega}_{j}=0$ we have
\begin{eqnarray*}
\Vert \mathcal{R}(\widetilde{\omega}_{j},\nabla v)\Vert_{B^{-1}_{\infty,\infty}}&=& \Vert \textnormal{div}\mathcal{R}(\widetilde{\omega}_{j}\otimes v)\Vert_{B^{-1}_{\infty,\infty}}\\
&\lesssim& \sup_{k} \sum_{q\ge k -3}\Vert\Delta_{q}\widetilde{\omega}_{j}\Vert_{L^\infty}\Vert\widetilde{\Delta}_{q} v \Vert_{L^\infty}\\
&\lesssim& \Vert v \Vert_{B_{\infty,1}^1} \Vert\widetilde{\omega}_{j}\Vert_{B^{-1}_{\infty,\infty}}.
\end{eqnarray*} 
Therefore,
\begin{equation}\label{z}
\Vert\widetilde{\omega}_{j}\cdot\nabla v\Vert_{B^{-1}_{\infty,\infty}}\lesssim \Vert v \Vert_{B_{\infty,1}^1} \Vert\widetilde{\omega}_{j}\Vert_{B^{-1}_{\infty,\infty}}.
\end{equation}
Using now H\"older and Bernstein inequalities for the estimate of the second integral of \eqref{y}, we obtain
\begin{eqnarray*}
\int_0^t e^{-CV(\tau)}\Vert curl (\Delta_{j}\rho e_{z})\Vert_{B^{-1}_{\infty,\infty}}d\tau &\le& \Vert curl (\Delta_{j}\rho)\Vert_{L_{t}^{1}B^{-1}_{\infty,\infty}} \\
&\lesssim& \Vert \Delta_{j}\rho\Vert_{L_{t}^{1} B^{0}_{\infty,\infty}}\\
&\lesssim& \Vert \Delta_{j}\rho\Vert_{L_{t}^{1} L^\infty}.
\end{eqnarray*}
Hence it follows that into \eqref{y},
\begin{equation*}
e^{-CV(t)}\Vert\widetilde{\omega}_{j}(t)\Vert_{B^{-1}_{\infty,\infty}}\lesssim \Vert\Delta_{j}\omega^0\Vert_{B^{-1}_{\infty,\infty}}+\Vert \Delta_{j}\rho\Vert_{L_{t}^{1} L^\infty}+\int_0^t e^{-CV(\tau)}\Vert\widetilde{\omega}_{j}\Vert_{B^{-1}_{\infty,\infty}}\Vert v(\tau)\Vert_{B_{\infty,1}^1} d\tau.
\end{equation*}
According to the Gronwall inequality, we get
\begin{equation*}
e^{-CV(t)}\Vert\widetilde{\omega}_{j}(t)\Vert_{B^{-1}_{\infty,\infty}}\lesssim \Big(\Vert\Delta_{j}\omega^0\Vert_{B^{-1}_{\infty,\infty}}+\Vert \Delta_{j}\rho\Vert_{L_{t}^{1} L^\infty}\Big)e^{C V(t)}.
\end{equation*}
This implies that
\begin{eqnarray*}
\Vert\widetilde{\omega}_{j}(t)\Vert_{B^{-1}_{\infty,\infty}}&\lesssim& (\Vert\Delta_{j}\omega^0\Vert_{B^{-1}_{\infty,\infty}}+\Vert \Delta_{j}\rho\Vert_{L_{t}^{1} L^\infty}) e^{CV(t)}\\
&\lesssim& (2^{-j} \Vert\Delta_{j}\omega^0\Vert_{L^{\infty}}+\Vert \Delta_{j}\rho(t)\Vert_{L_{t}^{1} L^\infty}) e^{CV(t)}. 
\end{eqnarray*}
Finally we get by definition,
\begin{equation*}
\Vert\Delta_{k}\widetilde{\omega}_{j}(t)\Vert_{L^{\infty}}\lesssim 2^{k-j} e^{CV(t)}(\Vert\Delta_{j}\omega^0\Vert_{L^{\infty}} +2^{j} \Vert \Delta_{j}\rho(t)\Vert_{L_{t}^{1} L^\infty}).
\end{equation*}
which is the desired result.\\
\underline{Proof of \eqref{t2}}. Since the $z$-component of $\omega^0$ is zero, then $\widetilde{\omega}_{j}=(\widetilde{\omega}_{j}^1,\widetilde{\omega}_{j}^2,0).$ We are going to work with the two components separately. The analysis will be exactly the same so we deal only with first component $\widetilde{\omega}_{j}^{1}.$ From the identity $\frac{v^r}{r}=\frac{v^1}{x_1}=\frac{v^2}{x_2}$ which is an easy consequence of $v^\theta=0,$ it is clear that the functions $\widetilde{\omega}_{j}^1$ solves the equation,
\begin{equation*} 
\left\{\begin{array}{ll} 
\partial_{t}\widetilde{\omega}_{j}^1+v\cdot\nabla\widetilde{\omega}_{j}^1= v^2\frac{\widetilde{\omega}_{j}^1}{x_2}+\partial_{2}\Delta_j\rho\\
\widetilde{\omega}_{j}^{1}(t=0)=\Delta_{j}\omega_{0}^{1}.  
\end{array} \right.
\end{equation*} 
Applying Proposition \ref{prop2} to the above equation,
\begin{eqnarray}\label{z1}
\nonumber e^{-CV(t)}\Vert\widetilde{\omega}_{j}^1(t)\Vert_{B^{1}_{\infty,1}}&\lesssim& \Vert\Delta_{j}\omega^1_0\Vert_{B^{1}_{\infty,1}}+ \int_0^t e^{-CV(\tau)}\Vert v^2\frac{\widetilde{\omega}_{j}^1(\tau)}{x_2}\Vert_{B^{1}_{\infty,1}}d\tau\\
&+& \int_0^t e^{-CV(\tau)}\Vert\partial_{2}\Delta_{j}\rho(\tau)\Vert_{B^{1}_{\infty,1}}d\tau,
\end{eqnarray}
To estimate the first integral, we use the decomposition of Bony \eqref{j}, then we have
\begin{equation}\label{z2}
\Vert v^2\frac{\widetilde{\omega}_{j}^1}{x_2}\Vert_{B^{1}_{\infty,1}}\le \Vert T_{v^2}\frac{\widetilde{\omega}_{j}^1}{x_2}\Vert_{B^{1}_{\infty,1}}+\Vert T_{\frac{\widetilde{\omega}_{j}^1}{x_2}}v^2 \Vert_{B^{1}_{\infty,1}}+\Vert\mathcal{R}(v^2,\frac{\widetilde{\omega}_{j}^1}{x_2})\Vert_{B^{1}_{\infty,1}}. 
\end{equation}
We have by definition,
\begin{eqnarray}\label{z3}
\nonumber \Vert T_{\frac{\widetilde{\omega}_{j}^1}{x_2}}v^2 \Vert_{B^{1}_{\infty,1}}&\lesssim& \sum_{k} 2^{k}\Vert S_{k-1}(\frac{\widetilde{\omega}_{j}^1}{x_2})\Vert_{L^{\infty}}\Vert\Delta_{k}v^{2}\Vert_{L^\infty}\\
\nonumber &\lesssim& \bigg\Vert\frac{\widetilde{\omega}_{j}^1}{x_2}\bigg\Vert_{L^{\infty}}\Vert v \Vert_{B^{1}_{\infty,1}}\\ 
&\lesssim& \Big\Vert\frac{\widetilde{\omega}_{j}^1}{x_2}\Big\Vert_{B^{0}_{\infty,1}}\Vert v \Vert_{B^{1}_{\infty,1}}. 
\end{eqnarray}
The remainder term is estimated as follows,
$$\Vert\mathcal{R}(v^2,\frac{\widetilde{\omega}_{j}^1}{x_2})\Vert_{B^{1}_{\infty,1}}=\sum_{j}2^{j}\Vert\Delta_{j}\mathcal{R}(v^2,\frac{\widetilde{\omega}_{j}^1}{x_2})\Vert_{L^{\infty}}.$$
Now 
\begin{eqnarray*}
\Delta_{j}\mathcal{R}(v^2,\frac{\widetilde{\omega}_{j}^1}{x_2})&=& \Delta_{j}\sum_{k\ge -1}\Delta_{k}v^{2}\widetilde{\Delta}_{k}(\frac{\widetilde{\omega}_{j}^1}{x_2})\\
&=& \sum_{k}\Delta_{j}\big(\Delta_{k}v^{2}\widetilde{\Delta}_{k}(\frac{\widetilde{\omega}_{j}^1}{x_2})\big)
\end{eqnarray*}
Since $supp\mathcal{F}(\Delta_{k}v^{2}\widetilde{\Delta}_{k}(\frac{\widetilde{\omega}_{j}^1}{x_2}))\subset 2^{k}\mathcal{B},$ then $\Delta_{j}\big(\Delta_{k}v^{2}\widetilde{\Delta}_{k}(\frac{\widetilde{\omega}_{j}^1}{x_2})\big)=0$ if $k\le j-4.$ It follows that
$$\Vert\Delta_{j}\mathcal{R}(v^2,\frac{\widetilde{\omega}_{j}^1}{x_2})\Vert_{L^\infty}\lesssim\sum_{k\ge j-4}\Vert\Delta_{k}v^{2}\Vert_{L^\infty} \Vert\widetilde{\Delta}_{k}(\frac{\widetilde{\omega}_{j}^1}{x_2})\Vert_{L^{\infty}},$$
where we have used the continuity of the operator $\Delta_{j}$ in $L^\infty.$ Therefore
\begin{eqnarray}\label{z4}
\nonumber \Vert\mathcal{R}(v^2,\frac{\widetilde{\omega}_{j}^1}{x_2})\Vert_{B^{1}_{\infty,1}}&\lesssim& \sum_{j}2^{j}\sum_{k\ge j-4}\Vert\Delta_{k}v^{2}\Vert_{L^\infty}\Vert\widetilde{\Delta}_{k}(\frac{\widetilde{\omega}_{j}^1}{x_2})\Vert_{L^{\infty}}\\
\nonumber &\lesssim& \Big\Vert\frac{\widetilde{\omega}_{j}^1}{x_2}\Big\Vert_{L^{\infty}}\sum_{k}2^{k}\Vert\Delta_{k}v^{2}\Vert_{L^\infty}\sum_{j\le k+4}2^{j-k}\\
\nonumber &\lesssim& \Big\Vert\frac{\widetilde{\omega}_{j}^1}{x_2}\Big\Vert_{L^{\infty}}\Vert v \Vert_{B^{1}_{\infty,1}}\\ 
&\lesssim& \Big\Vert\frac{\widetilde{\omega}_{j}^1}{x_2}\Big\Vert_{B^{0}_{\infty,1}}\Vert v \Vert_{B^{1}_{\infty,1}}.
\end{eqnarray}
Now to estimate the term $\Vert T_{v^2}\frac{\widetilde{\omega}_{j}^1}{x_2}\Vert_{B^{1}_{\infty,1}},$ we use the axisymmetric structure of the vector field $v.$ We have by definition
\begin{equation*}
\Vert T_{v^2}\frac{\widetilde{\omega}_{j}^1}{x_2}\Vert_{B^{1}_{\infty,1}}\lesssim \sum_{k \ge 0} 2^{k}\Vert S_{k-1}v^{2}(x)\Delta_{k}(\frac{\widetilde{\omega}^{1}_{j}(x)}{x_2})\Vert_{L^{\infty}}.
\end{equation*}
We write now,
\begin{eqnarray*}
S_{k-1}v^{2}(x)\Delta_{k}(\frac{\widetilde{\omega}^{1}_{j}(x)}{x_2})&=& S_{k-1}v^{2}(x)\frac{\Delta_{k}\widetilde{\omega}^{1}_{j}(x)}{x_2}+S_{k-1}v^{2}(x)[\Delta_{k},\frac{1}{x_{2}}]\widetilde{\omega}^{1}_{j}\\
&:=&\textnormal{I}_{k}+\textnormal{II}_{k},
\end{eqnarray*}
where we have used the notation $$[\Delta_{j},a]b=\Delta_{j}(ab)-a\Delta_{j}b.$$
By Proposition \ref{prop4} we have $S_{k-1}v$ is axisymmetric and then $S_{k-1}v^{2}(x_1,0,z)=0.$ Therefore from Taylor formula,
\begin{equation*}
S_{k-1} v^{2}(x_1, x_2, z)=x_{2}\int_{0}^{1}\Big(\partial_{2} S_{k-1} v^{2}\Big)(x_1, \tau x_2, z) d\tau.
\end{equation*}
Then
\begin{equation}\label{z5}
\Vert \frac{S_{k-1} v^{2}(x)}{x_2}\Vert_{L^\infty}\lesssim \Vert\nabla v \Vert_{L^\infty}. 
\end{equation}
Thus
$$\Vert\textnormal{I}_{k}\Vert_{L^\infty}\lesssim \Vert\nabla v \Vert_{L^\infty}\Vert\Delta_{k}\widetilde{\omega}^{1}_{j}\Vert_{L^\infty}.$$
Therefore
\begin{eqnarray}\label{z6}
\nonumber \sum_{k \ge 0}2^{k}\Vert\textnormal{I}_{k}\Vert_{L^\infty}&\lesssim& \Vert\nabla v \Vert_{L^\infty}\sum_{k \ge 0}2^{k}\Vert\Delta_{k}\widetilde{\omega}^{1}_{j}\Vert_{L^\infty}\\
&\lesssim& \Vert\nabla v \Vert_{L^\infty}\Vert\widetilde{\omega}^{1}_{j}\Vert_{B^{1}_{\infty,1}} 
\end{eqnarray}
For the commutator term,we write by definition
\begin{eqnarray*} 
\textnormal{II}_{k}&=&S_{k-1}v^{2}(x)\Delta_{k}(\frac{\widetilde{\omega}^{1}_{j}(x)}{x_2})-\frac{S_{k-1}v^{2}(x)}{x_2}\Delta_{k}\widetilde{\omega}^{1}_{j}\\
&=& \frac{S_{k-1}v^{2}(x)}{x_2} 2^{3k}\int_{\RR^3}h(2^{k}(x-y))(x_2-y_2) \frac{\widetilde{\omega}^{1}_{j}}{y_2}(y) dy\\
&=& 2^{-k}\frac{S_{k-1}v^{2}(x)}{x_2}2^{3k}\widetilde{h}(2^{k}\cdot)\ast\Big(\frac{\widetilde{\omega}^{1}_{j}}{y_2} \Big)(x),
\end{eqnarray*}
where $\widetilde{h}(x)=x_2 h(x).$ Since $\mathcal{F}(\widetilde{h}(\xi))=i\partial_{\xi_{2}}\mathcal{F}(h(\xi))=i\partial_{\xi_{2}}\varphi(\xi).$ Then it follows that $supp\;\mathcal{F}(\widetilde{h})\subset supp\,\mathcal{F}(h)=supp\,\varphi.$ Therefore for every $g \in \mathcal{S}^\prime$ we have $2^{3k}\widetilde{h}(2^{k}\cdot)\ast\Delta_{p}g=0\;,\;\textnormal{for}\;\vert k-p\vert\ge2.$ This leads to
$$2^{3k}\widetilde{h}(2^{k}\cdot)\ast g=\sum_{\vert k-p \vert \le 1}2^{3k}\widetilde{h}(2^{k}\cdot)\ast \Delta_{p} g.$$
Hence by Young inequality for convolution and \eqref{z5} we get
\begin{eqnarray}\label{z7}
\nonumber \sum_{k \ge 0}2^{k}\Vert\textnormal{II}_{k}\Vert_{L^\infty}&\lesssim& \sum_{\vert k-p\vert \le 1}\Vert\frac{S_{k-1}v^{2}(x)}{x_2}\Vert_{L^\infty}\Vert 2^{3k}\widetilde{h}(2^{k}\cdot)\ast\Delta_{p}(\frac{\widetilde{\omega}^{1}_{j}}{x_2})\Vert_{L^\infty}\\
\nonumber &\lesssim& \Vert\nabla v \Vert_{L^\infty}\sum_{\vert k-p\vert \le 1}\Vert\widetilde{h}\Vert_{L^1}\Vert\Delta_{p}(\frac{\widetilde{\omega}^{1}_{j}}{x_2})\Vert_{L^\infty}\\
&\lesssim& \Vert\nabla v \Vert_{L^\infty}\Vert\frac{\widetilde{\omega}^{1}_{j}}{x_2}\Vert_{B^{0}_{\infty,1}}. 
\end{eqnarray}
Thus it follows from \eqref{z6} and \eqref{z7} that,
\begin{eqnarray}\label{z8}
\nonumber \Vert T_{v^2}\frac{\widetilde{\omega}_{j}^1}{x_2}\Vert_{B^{1}_{\infty,1}}&\lesssim& \Vert\nabla v \Vert_{L^\infty}(\Vert\widetilde{\omega}^{1}_{j}\Vert_{B^{1}_{\infty,1}}+\Vert\frac{\widetilde{\omega}^{1}_{j}}{x_2}\Vert_{B^{0}_{\infty,1}})\\
&\lesssim& \Vert v \Vert_{B^{1}_{\infty,1}}(\Vert\widetilde{\omega}^{1}_{j}\Vert_{B^{1}_{\infty,1}}+\Vert\frac{\widetilde{\omega}^{1}_{j}}{x_2}\Vert_{B^{0}_{\infty,1}}).
\end{eqnarray}
Now putting  together \eqref{z3}, \eqref{z4}, \eqref{z8} and \eqref{z2} we find
\begin{equation*}
\Vert v^2\frac{\widetilde{\omega}_{j}^1}{x_2}\Vert_{B^{1}_{\infty,1}}\lesssim \Vert v \Vert_{B^{1}_{\infty,1}}(\Vert\widetilde{\omega}^{1}_{j}\Vert_{B^{1}_{\infty,1}}+\Vert\frac{\widetilde{\omega}^{1}_{j}}{x_2}\Vert_{B^{0}_{\infty,1}}). 
\end{equation*}
Plugging this last estimate into \eqref{z1} and using Bernstein and H\"older inequalities, we find
\begin{eqnarray*}
e^{-CV(t)}\Vert\widetilde{\omega}_{j}^1(t)\Vert_{B^{1}_{\infty,1}}&\lesssim& \Vert\Delta_{j}\omega^1_0\Vert_{B^{1}_{\infty,1}}+\int_0^t e^{-CV(\tau)}\Vert\widetilde{\omega}_{j}^1(\tau)\Vert_{B^{1}_{\infty,1}}\Vert v(\tau)\Vert_{B^{1}_{\infty,1}}d\tau\\
&+& \int_0^t e^{-CV(\tau)}\Vert\frac{\widetilde{\omega}^{1}_{j}}{x_2}\Vert_{B^{0}_{\infty,1}}\Vert v(\tau)\Vert_{B^{1}_{\infty,1}}d\tau+2^{2j}\Vert\Delta_{j}\rho(t)\Vert_{L_{t}^{1}L^\infty}\\
&\lesssim& \Vert\Delta_{j}\omega^1_0\Vert_{B^{1}_{\infty,1}}+\Vert\frac{\widetilde{\omega}^{1}_{j}}{x_2}\Vert_{L_{t}^{\infty} B^{0}_{\infty,1}}+2^{2j}\Vert\Delta_{j}\rho(t)\Vert_{L_{t}^{1}L^\infty}\\
&+& \int_0^t e^{-CV(\tau)}\Vert\widetilde{\omega}_{j}^1(\tau)\Vert_{B^{1}_{\infty,1}}\Vert v(\tau)\Vert_{B^{1}_{\infty,1}}d\tau.
\end{eqnarray*}
According to Gronwall's inequality one obtain,
\begin{equation*}
e^{-CV(t)}\Vert\widetilde{\omega}_{j}^{1}(t)\Vert_{B^{1}_{\infty,1}}\lesssim \big(\Vert\Delta_{j}\omega^1_0\Vert_{B^{1}_{\infty,1}}+\Vert\frac{\widetilde{\omega}^{1}_{j}}{x_2}\Vert_{L_{t}^{\infty} B^{0}_{\infty,1}}+2^{2j}\Vert\Delta_{j}\rho(t)\Vert_{L_{t}^{1}L^{\infty}}\big)e^{CV(t)}.
\end{equation*}
Then 
\begin{equation}\label{z9}
\Vert\widetilde{\omega}_{j}^{1}(t)\Vert_{B^{1}_{\infty,1}}\lesssim \big(\Vert\Delta_{j}\omega^{1}_{0}\Vert_{B^{1}_{\infty,1}}+\Vert\frac{\widetilde{\omega}^{1}_{j}}{x_2}\Vert_{L_{t}^{\infty} B^{0}_{\infty,1}}+2^{2j}\Vert\Delta_{j}\rho(t)\Vert_{L_{t}^{1}L^{\infty}}\big)e^{CV(t)}.
\end{equation}
It remains to estimate $\Vert\frac{\widetilde{\omega}^{1}_{j}}{x_2}\Vert_{L_{t}^{\infty} B^{0}_{\infty,1}}.$ For this purpose we observe that $\frac{\widetilde{\omega}^{1}_{j}}{x_2}$ solves the equation,
\begin{equation*}
\left\{
\begin{array}{ll} 
\partial_{t}\frac{\widetilde{\omega}_{j}^{1}}{x_2}+v\cdot\nabla\frac{\widetilde{\omega}_{j}^{1}}{x_2}=\frac{\partial_{2}\Delta_{j}\rho}{x_2}\\ 
\frac{\widetilde{\omega}_{j}^{1}}{x_2}(t=0)=\frac{\Delta_{j}\omega_{0}^{1}}{x_2}
\end{array} \right.
\end{equation*} 
Applying again Proposition \ref{prop2} and H\"older inequality yields,
\begin{eqnarray}\label{z10}
\nonumber \Vert\frac{\widetilde{\omega}^{1}_{j}}{x_2}\Vert_{B^{0}_{\infty,1}}&\le& C e^{CV_{1}(t)}\bigg(\Vert\frac{\Delta_{j}\omega^{1}_{0}}{x_2}\Vert_{B^{0}_{\infty,1}}+\int_0^t e^{-CV_{1}(\tau)}\Vert\frac{\partial_{2}\Delta_{j}\rho(\tau)}{x_2}\Vert_{B^{0}_{\infty,1}}d\tau\bigg)\\
&\le& C e^{CV(t)}\Big(\Vert\frac{\Delta_{j}\omega^{1}_{0}}{x_2}\Vert_{B^{0}_{\infty,1}}+\Vert\frac{\partial_{2}\Delta_{j}\rho(t)}{x_2}\Vert_{L_{t}^{1}B^{0}_{\infty,1}}\Big).
\end{eqnarray}
By Taylor formula,
\begin{equation}\label{z11}
\partial_{2}\Delta_{j}\rho(x_1,x_2,z)=\partial_{2}\Delta_{j}\rho(x_1,0,z)+x_2 \int_0^1 \partial_{22}\Delta_{j}\rho(x_1,\tau x_2,z)d\tau.
\end{equation}
Since $\Delta_{j}\rho$ is an axisymmetric function and
\begin{eqnarray*}
\partial_{2}\Delta_{j}\rho(x_{1},x_{2},z)&=&\partial_{r}\Delta_{j}\rho(x_{1},x_{2},z)\partial_{2} r\\
&=&\partial_{r}\Delta_{j}\rho(x_{1},x_{2},z)\frac{x_2}{r},
\end{eqnarray*}
then $\partial_{2}\Delta_{j}\rho(x_1,x_2=0,z) =0.$ Thus
\begin{equation*}
\Vert\frac{\partial_{2}\Delta_{j}\rho}{x_2}\Vert_{B^{0}_{\infty,1}}\le \int_0^1 \Vert(\partial_{22}\Delta_{j}\rho)(\cdot,\tau\cdot,\cdot)\Vert_{B^{0}_{\infty,1}}d\tau.
\end{equation*}
At this stage we need to the following proposition (see \cite{ahs08} for the proof).
\begin{prop}\label{prop a5}
Let $h:\RR^3\longrightarrow \RR$ be a function such that $h \in B_{\infty,1}^0$ and we denote by $h_{\tau}(x_1,x_2,x_3)=h(x_1,\tau x_2,x_3)$. Then for every $0<\tau<1$, we have
$$\Vert h_{\tau}\Vert_{B_{\infty,1}^0}\le C(1-\log \tau)\Vert h \Vert_{B_{\infty,1}^0},$$
where $C$ is a absolute positive constant.
\end{prop}
Hence it follows that
\begin{eqnarray}\label{z12}
\nonumber \Vert\frac{\partial_{2}\Delta_{j}\rho(t)}{x_2}\Vert_{B^{0}_{\infty,1}}&\lesssim& \Vert\partial_{22}\Delta_{j}\rho(t)\Vert_{B_{\infty,1}^0}\int_0^1(1-\log \tau)d\tau\\
 &\lesssim& 2^{2j}\Vert\Delta_{j}\rho(t)\Vert_{L^{\infty}}.
\end{eqnarray}
Now to estimate $\Vert\frac{\Delta_{j}\omega^{1}_{0}}{x_2}\Vert_{B^{0}_{\infty,1}}$, we have $v^0$ is axisymmetric then by Proposition \ref{prop4}, $\Delta_j v^0$ is also axisymmetric. Consequently $\Delta_j \omega^0$ is the vorticity of an axisymmetric vector field hence $\Delta_j \omega^1_0(x_1,0,z)=0.$ Applying again Taylor formula we get  
$$\Delta_{j}\omega^{1}_{0}(x_1,x_2,z)=x_{2}\int_{0}^{1}\partial_{x_{2}} \Delta_{j}\omega^{1}_{0}(x_1,\tau x_2,z) d\tau.$$
Using Proposition \ref{prop a5} as above, we get easily
\begin{eqnarray}\label{z13}
\nonumber\Vert\frac{\Delta_{j}\omega_0^1}{x_2}\Vert_{B^{0}_{\infty,1}}&\le& \int_{0}^{1}\Vert(\partial_{x_{2}} \Delta_{j}\omega^{1}_{0})(\cdot,\tau\cdot,\cdot)\Vert_{B^{0}_{\infty,1}}d\tau\\  
\nonumber &\lesssim& \Vert(\partial_{x_{2}} \Delta_{j}\omega^{1}_{0})\Vert_{B^{0}_{\infty,1}}\int_{0}^{1}(1-\log \tau)d\tau\\
&\lesssim& 2^{j}\Vert\Delta_{j}\omega_0\Vert_{L^\infty}.
\end{eqnarray}
Plugging \eqref{z12} and \eqref{z13} into \eqref{z10}, we find
\begin{eqnarray}\label{z14}
\nonumber \Vert\frac{\widetilde{\omega}^{1}_{j}}{x_2}\Vert_{B^{0}_{\infty,1}}&\lesssim& \Big(2^{j}\Vert\Delta_{j}\omega_0\Vert_{L^\infty}+\Vert\frac{\partial_{2}\Delta_{j}\rho(t)}{x_2}\Vert_{L_{t}^{1} B^{0}_{\infty,1}}\Big)e^{CV(t)}\\
&\lesssim& \Big(2^{j}\Vert\Delta_{j}\omega_0\Vert_{L^\infty}+2^{2j}\Vert\Delta_{j}\rho(t)\Vert_{L_{t}^{1} L^{\infty}}\Big)e^{CV(t)}.
\end{eqnarray}
Plugging now the estimate \eqref{z14} into \eqref{z9} and using the embedding $L_{t}^{1}B_{\infty,1}^{1}\hookrightarrow L_{t}^{1}Lip,$ we obtain
\begin{equation*}
\Vert\widetilde{\omega}^{1}_{j}(t)\Vert_{B^{1}_{\infty,1}}\lesssim \big(2^{j}\Vert\Delta_{j}\omega_0\Vert_{L^\infty}+2^{2j}\Vert\Delta_{j}\rho(t)\Vert_{L_{t}^{1} L^{\infty}}\big)e^{CV(t)}.
\end{equation*}
This can be written as 
\begin{equation*}
\Vert\Delta_{k}\widetilde{\omega}^{1}_{j}(t)\Vert_{L^{\infty}}\lesssim 2^{j-k} e^{CV(t)}\big(\Vert\Delta_{j}\omega_0\Vert_{L^\infty}+2^{j}\Vert\Delta_{j}\rho(t)\Vert_{L_{t}^{1} L^{\infty}}\big).
\end{equation*}
Similar arguments gives the same estimate for $\widetilde{\omega}^{2}_{j}.$ Finally we obtain,
\begin{equation*}
\Vert\Delta_{k}\widetilde{\omega}_{j}(t)\Vert_{L^{\infty}}\lesssim 2^{j-k} e^{CV(t)}\big(\Vert\Delta_{j}\omega_0\Vert_{L^\infty}+2^{j}\Vert\Delta_{j}\rho(t)\Vert_{L_{t}^{1} L^{\infty}}\big).
\end{equation*}
The proof of the Proposition is now achieved.
\end{proof}
We will now see how use this to estimate $\Vert\omega(t)\Vert_{B^{0}_{\infty,1}}$ and then to obtain a Lipschitz norm of the velocity $v.$
\begin{prop}\label{prop a6} Let $v^{0}\in L^{2}$ such that $\frac{\omega^0}{r}\in L^{3,1}$, $\rho^0\in L^{2}\cap L^{p}$ with $p>6$ and such that $\vert x_{h}\vert^{2}\rho^{0}\in L^{2}$ and if in addition  $\omega^{0}\in B^{0}_{\infty,1}$. Then the system \eqref{a} satisfies for every $t \in \RR_+,$ 
$$\Vert\omega(t)\Vert_{B^{0}_{\infty,1}}+\Vert\nabla v(t)\Vert_{L^{\infty}}\lesssim \Phi_{5}(t).$$
\end{prop}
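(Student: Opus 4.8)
The plan is to convert the two complementary bounds of Proposition~\ref{prop a4} into a \emph{linear} (rather than exponential) control of $\Vert\omega(t)\Vert_{B^{0}_{\infty,1}}$ by $V(t)$, in the spirit of Vishik's logarithmic estimate. Writing $\Vert\omega(t)\Vert_{B^{0}_{\infty,1}}=\sum_{k\ge-1}\Vert\Delta_{k}\omega(t)\Vert_{L^\infty}$ and using the decomposition $\omega=\sum_{j\ge-1}\widetilde{\omega}_{j}$ of Proposition~\ref{prop a4}-(1), I would start from
\begin{equation*}
\Vert\Delta_{k}\omega(t)\Vert_{L^\infty}\le\sum_{j\ge-1}\Vert\Delta_{k}\widetilde{\omega}_{j}(t)\Vert_{L^\infty}.
\end{equation*}
Setting $a_{j}:=\Vert\Delta_{j}\omega^{0}\Vert_{L^\infty}+2^{j}\Vert\Delta_{j}\rho(t)\Vert_{L_{t}^{1}L^\infty}$, so that $\sum_{j}a_{j}=\Vert\omega^{0}\Vert_{B^{0}_{\infty,1}}+\Vert\rho(t)\Vert_{L_{t}^{1}B^{1}_{\infty,1}}$, I have two ways to bound each term: from Proposition~\ref{prop a4}-(3) together with the $L^\infty$-continuity of $\Delta_{k}$, $\Vert\Delta_{k}\widetilde{\omega}_{j}(t)\Vert_{L^\infty}\lesssim a_{j}\,\Phi_{3}(t)$ (with no dependence on $V$), and from Proposition~\ref{prop a4}-(4), $\Vert\Delta_{k}\widetilde{\omega}_{j}(t)\Vert_{L^\infty}\lesssim 2^{-\vert k-j\vert}e^{CV(t)}a_{j}$ (with decay in $\vert k-j\vert$ but an exponential factor).

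The crucial step is to interpolate between these two bounds. For a parameter $N\in\NN$ to be chosen, I would use the $V$-free bound on the near-diagonal block $\vert k-j\vert\le N$ and the decaying bound on the far block $\vert k-j\vert>N$. Summing in $k$ and then in $j$,
\begin{equation*}
\Vert\omega(t)\Vert_{B^{0}_{\infty,1}}\lesssim\Big((2N+1)\Phi_{3}(t)+2^{-N}e^{CV(t)}\Big)\sum_{j\ge-1}a_{j},
\end{equation*}
where the factor $2N+1$ counts the near-diagonal indices and $\sum_{\vert k-j\vert>N}2^{-\vert k-j\vert}\lesssim 2^{-N}$ handles the tail. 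Choosing $N\simeq 1+V(t)$ so that $2^{-N}e^{CV(t)}\lesssim 1$ turns this into
\begin{equation*}
\Vert\omega(t)\Vert_{B^{0}_{\infty,1}}\lesssim\big(1+V(t)\big)\Phi_{3}(t)\Big(\Vert\omega^{0}\Vert_{B^{0}_{\infty,1}}+\Vert\rho(t)\Vert_{L_{t}^{1}B^{1}_{\infty,1}}\Big).
\end{equation*}
Inserting the bound $\Vert\rho(t)\Vert_{L_{t}^{1}B^{1}_{\infty,1}}\lesssim\Phi_{4}(t)$ from \eqref{dh} and absorbing the fixed constant $\Vert\omega^{0}\Vert_{B^{0}_{\infty,1}}$, this reads $\Vert\omega(t)\Vert_{B^{0}_{\infty,1}}\lesssim(1+V(t))\Phi_{4}(t)$.

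To close the estimate I would use the Biot--Savart law: since $v$ is divergence free with vorticity $\omega$, Bernstein's inequality gives $\Vert v(t)\Vert_{B^{1}_{\infty,1}}\lesssim\Vert v(t)\Vert_{L^2}+\Vert\omega(t)\Vert_{B^{0}_{\infty,1}}$ (the low frequencies are controlled in $L^2$, while for $j\ge0$ the high frequencies satisfy $2^{j}\Vert\Delta_{j}v\Vert_{L^\infty}\lesssim\Vert\Delta_{j}\omega\Vert_{L^\infty}$). Together with $\Vert v(t)\Vert_{L^2}\le C_{0}(1+t)$ from Proposition~\ref{prop a1}-(c), this yields $V(t)=\int_{0}^{t}\Vert v(\tau)\Vert_{B^{1}_{\infty,1}}d\tau\lesssim\Phi_{1}(t)+\int_{0}^{t}\Vert\omega(\tau)\Vert_{B^{0}_{\infty,1}}d\tau$. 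Substituting back, the quantity $X(t):=\Vert\omega(t)\Vert_{B^{0}_{\infty,1}}$ obeys the \emph{linear} integral inequality $X(t)\lesssim\Phi_{4}(t)+\Phi_{4}(t)\int_{0}^{t}X(\tau)d\tau$. Gronwall's lemma then gives $X(t)\lesssim\Phi_{4}(t)\exp\big(\int_{0}^{t}\Phi_{4}(\tau)d\tau\big)\lesssim\Phi_{5}(t)$ by the elementary properties of the tower functions $\Phi_{l}$, and finally $\Vert\nabla v(t)\Vert_{L^\infty}\lesssim\Vert v(t)\Vert_{B^{1}_{\infty,1}}\lesssim\Vert v(t)\Vert_{L^2}+X(t)\lesssim\Phi_{5}(t)$.

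The main obstacle is precisely the interpolation step. The bound of Proposition~\ref{prop a4}-(4) alone carries the factor $e^{CV(t)}$, and feeding it directly into Gronwall (through the dependence of $V$ on $\Vert\omega\Vert_{B^{0}_{\infty,1}}$) would produce a supercritical, only locally solvable inequality. The gain of the logarithmic splitting is that it trades this exponential for the harmless linear prefactor $1+V(t)$, which is exactly what allows the final Gronwall argument to close for all time. The remaining points — the Biot--Savart estimate and the bookkeeping with the functions $\Phi_{l}$ — are routine.
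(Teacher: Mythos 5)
Your proposal is correct and follows essentially the same route as the paper: the same near/far splitting in $\vert k-j\vert$ using Proposition \ref{prop a4}-(3) for the diagonal block and \ref{prop a4}-(4) for the tail, the same choice of cutoff $N\simeq 1+V(t)$ to trade the factor $e^{CV(t)}$ for the linear prefactor $1+V(t)$, and the same closure via $\Vert v\Vert_{B^{1}_{\infty,1}}\lesssim\Vert v\Vert_{L^{2}}+\Vert\omega\Vert_{B^{0}_{\infty,1}}$, the energy bound and Gronwall. The only cosmetic difference is that you run the final Gronwall on $\Vert\omega(t)\Vert_{B^{0}_{\infty,1}}$ while the paper runs it on $\Vert v(t)\Vert_{B^{1}_{\infty,1}}$; both yield $\Phi_{5}(t)$.
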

\begin{proof}
Let $M$ be a fixed positive integer that will be chosen later. Then we have from the definition of Besov spaces and Proposition \ref{prop a4}-(1),
\begin{eqnarray}\label{s1}
\nonumber \Vert\omega(t)\Vert_{B^{0}_{\infty,1}}&\le& \sum_{k}\Vert\Delta_{k}\sum_{j}\widetilde{\omega}_{j}(t)\Vert_{L^{\infty}}\\
\nonumber &\lesssim& \sum_{k}\bigg(\sum_{\vert k-j \vert \ge M}\Vert\Delta_{k}\widetilde{\omega}_{j}(t)\Vert_{L^{\infty}}\bigg)+\sum_{k}\bigg(\sum_{\vert k-j \vert < M}\Vert\Delta_{k}\widetilde{\omega}_{j}(t)\Vert_{L^{\infty}}\bigg)\\ 
&:=& \textnormal{I}_{1}+\textnormal{I}_{2}.
\end{eqnarray}
To estimate the term $\textnormal{I}_{1}$ we use Proposition \ref{prop a4}-(4), the convolution inequality for the series and the inequality \eqref{dh},
\begin{eqnarray}\label{s2}
\nonumber \textnormal{I}_{1}&=& \sum_{k}\bigg(\sum_{\vert k-j \vert \ge M}\Vert\Delta_{k}\widetilde{\omega}_{j}(t)\Vert_{L^{\infty}}\bigg)\\ 
\nonumber &\lesssim& 2^{-M}(\Vert\omega^{0}\Vert_{B^{0}_{\infty,1}}+\Vert\rho(t)\Vert_{L_{t}^{1}B^{1}_{\infty,1}})e^{C V(t)}\\
&\lesssim& 2^{-M}(\Vert\omega^{0}\Vert_{B^{0}_{\infty,1}}+\Phi_{4}(t))e^{C V(t)}
\end{eqnarray}
For the second term $\textnormal{I}_{2}$ we use the continuity of the operator $\Delta_k$ in the space $L^\infty$, Proposition \ref{prop a4}-(3) and \eqref{dh} and obtain
\begin{eqnarray}\label{s3}
\nonumber \textnormal{I}_{2}&\lesssim& \sum_{j}\sum_{\vert k-j \vert < M}\Vert\widetilde{\omega}_{j}(t)\Vert_{L^{\infty}}\\
\nonumber &\lesssim& \Phi_{3}(t)(\Vert\omega^{0}\Vert_{B^{0}_{\infty,1}}+\Vert\rho(t)\Vert_{L_{t}^{1}B^{1}_{\infty,1}})\\
\nonumber &\lesssim& \Phi_{3}(t) M (\Vert\omega^{0}\Vert_{B^{0}_{\infty,1}}+\Phi_{4}(t))\\
&\lesssim& \Phi_{4}(t) M. 
\end{eqnarray}
Combining now \eqref{s1}, \eqref{s2} and \eqref{s3} we find
\begin{eqnarray*}
\Vert\omega(t)\Vert_{B^{0}_{\infty,1}}\lesssim (2^{-M}e^{C V(t)}+\Phi_{4}(t)M).
\end{eqnarray*}
We choose $M$ such that
$$M=[C V(t)]+1,$$ then
\begin{equation}\label{s4}
\Vert\omega(t)\Vert_{B^{0}_{\infty,1}}\lesssim (V(t)+1) \Phi_{4}(t).
\end{equation}
It remains to estimate $V(t)$, then for this purpose we have by definition of Besov space, Bernstein inequality, Proposition \ref{prop a1} and the estimate $\Vert\Delta_{j}v \Vert_{L^\infty}\sim 2^{-j}\Vert\Delta_{j}\omega \Vert_{L^\infty},$
\begin{eqnarray*}
\Vert v(t)\Vert_{B^{1}_{\infty,1}}&=& \sum_{j \ge -1} 2^{j}\Vert\Delta_{j}v(t)\Vert_{L^\infty}\\
&\lesssim& \Vert\Delta_{-1}v(t)\Vert_{L^2}+\sum_{j \ge 0}\Vert\Delta_{j}\omega(t)\Vert_{L^\infty}\\
&\lesssim& \Vert v(t)\Vert_{L^2}+\Vert\omega(t)\Vert_{B^{0}_{\infty,1}}\\
&\lesssim& C_{0}(1+t)+\Phi_{4}(t)\Big(1+\int_{0}^{t}\Vert v(\tau)\Vert_{B^{1}_{\infty,1}}d\tau\Big). 
\end{eqnarray*}
We have used above \eqref{s4}, thus by Gronwall's inequality we obtain
\begin{equation}\label{s5}
\Vert v(t)\Vert_{B^{1}_{\infty,1}}\lesssim \Phi_{5}(t).
\end{equation}
Plugging this estimate in \eqref{s4} gives,
$$\Vert\omega(t)\Vert_{B^{0}_{\infty,1}}\lesssim \Phi_{5}(t).$$
Using now the embeddings $B^{1}_{\infty,1}\hookrightarrow Lip(\RR^3)$ and \eqref{s5} we get
$$\Vert\nabla v(t)\Vert_{L^\infty}\lesssim \Phi_{5}(t).$$  
\end{proof}
\subsubsection{Strong a priori estimates} The task is now to find some global estimates for stronger norms of the solution of \eqref{a}. 
\begin{prop}\label{prop a7}
Let  $v^0 \in B^{\frac{5}{2}}_{2,1}$ be a divergence free axisymmetric vector field without swirl and $\rho^0 \in B^{\frac{1}{2}}_{2,1}\cap L^p$ with $p>6$ an axisymmetric function such that $\vert x_{h}\vert^{2}\rho^{0}\in L^2.$ Then any smooth solution $(v,\rho)$ of the system \eqref{a} satisfies
\begin{equation*}
\Vert v \Vert_{\widetilde{L}_{t}^{\infty}B^{\frac{5}{2}}_{2,1}}+\Vert\rho\Vert_{\widetilde{L}_{t}^{\infty}B^{\frac{1}{2}}_{2,1}}+\Vert\rho\Vert_{\widetilde{L}_{t}^{1}B^{\frac{5}{2}}_{2,1}}\lesssim \Phi_{6}(t).
\end{equation*}
\end{prop}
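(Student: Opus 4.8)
The plan is to exploit that Proposition~\ref{prop a6} already provides the Lipschitz control $\|\nabla v(t)\|_{L^\infty}\lesssim\Phi_5(t)$, so that $V_1(t):=\|\nabla v\|_{L^1_tL^\infty}\lesssim\int_0^t\Phi_5(\tau)\,d\tau\lesssim\Phi_5(t)$ and likewise $V(t)\lesssim\Phi_5(t)$. With the transport rate thus frozen, the remaining regularity is propagated in two essentially decoupled steps: first the density (whose equation sees $v$ only through the transport term), then the vorticity at one derivative below the target, the density furnishing the source through $\textnormal{curl}(\rho e_z)$. At the end I recover $v$ from $\omega$ by Biot--Savart. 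Throughout I work in the Chemin--Lerner spaces $\widetilde{L}^\infty_tB^s_{2,1}$ and $\widetilde{L}^1_tB^s_{2,1}$, in which the transport and smoothing estimates are naturally stated.

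\emph{Step 1 (density).} I apply the smoothing estimate for the transport--diffusion equation $\partial_t\rho+v\cdot\nabla\rho-\Delta\rho=0$ at regularity $s=\tfrac12$. Localizing in frequency gives $\partial_t\Delta_j\rho+v\cdot\nabla\Delta_j\rho-\Delta\Delta_j\rho=-[\Delta_j,v\cdot\nabla]\rho$; combining a frequency-localized parabolic gain of the type \eqref{0x} with the commutator bound $\sum_j2^{j/2}\|[\Delta_j,v\cdot\nabla]\rho\|_{L^2}\lesssim\|\nabla v\|_{L^\infty}\|\rho\|_{B^{1/2}_{2,1}}$ (valid since $\tfrac12\in(-1,1)$) and a Gronwall argument in the $\widetilde L$--norm yields
$$\|\rho\|_{\widetilde L^\infty_tB^{1/2}_{2,1}}+\|\rho\|_{\widetilde L^1_tB^{5/2}_{2,1}}\lesssim e^{CV_1(t)}\|\rho^0\|_{B^{1/2}_{2,1}}\lesssim\Phi_6(t),$$
where the last inequality uses $V_1(t)\lesssim\Phi_5(t)$ together with the convention $\exp(\int_0^t\Phi_5)\le\Phi_6(t)$. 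This already yields the two density terms of the statement (and Remark~\ref{cc}).

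\emph{Step 2 (vorticity).} Since $v\in B^{5/2}_{2,1}$ is equivalent, up to the $L^2$ low frequencies, to $\omega\in B^{3/2}_{2,1}$, I propagate $\omega$ at $s=\tfrac32$ through $\partial_t\omega+v\cdot\nabla\omega=\omega\cdot\nabla v+\textnormal{curl}(\rho e_z)$. As $\tfrac32\in(-\tfrac32,\tfrac52)$, the version of Proposition~\ref{prop2} valid for arbitrary $s$ in this range (see \cite{che98}) applies with $V_1$ in the exponent. The two source terms are handled by paraproduct calculus: $\|\textnormal{curl}(\rho e_z)\|_{B^{3/2}_{2,1}}\lesssim\|\rho\|_{B^{5/2}_{2,1}}$, whose time integral is controlled by Step~1, while the stretching term obeys $\|\omega\cdot\nabla v\|_{B^{3/2}_{2,1}}\lesssim(\|\nabla v\|_{L^\infty}+\|\omega\|_{L^\infty})\|\omega\|_{B^{3/2}_{2,1}}\lesssim\Phi_5(t)\|\omega\|_{B^{3/2}_{2,1}}$, using $\|\nabla v\|_{B^{3/2}_{2,1}}\lesssim\|v\|_{L^2}+\|\omega\|_{B^{3/2}_{2,1}}$ and Proposition~\ref{prop a6}. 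Inserting these into the transport estimate and applying Gronwall gives $\|\omega\|_{\widetilde L^\infty_tB^{3/2}_{2,1}}\lesssim\Phi_6(t)$.

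\emph{Step 3 (recovery of $v$ and main difficulty).} Biot--Savart together with Proposition~\ref{prop a1}$(c)$ gives $\|v\|_{\widetilde L^\infty_tB^{5/2}_{2,1}}\lesssim\|v\|_{L^\infty_tL^2}+\|\omega\|_{\widetilde L^\infty_tB^{3/2}_{2,1}}\lesssim C_0(1+t)+\Phi_6(t)\lesssim\Phi_6(t)$, which combined with Steps~1--2 is the assertion. I expect the main obstacle to be Step~1: the forcing $\textnormal{curl}(\rho e_z)$ costs one derivative on $\rho$, so the velocity estimate can close only if $\rho$ is gained two full derivatives (from $B^{1/2}_{2,1}$ to $B^{5/2}_{2,1}$) in $L^1_t$, and this parabolic maximal--regularity gain must be obtained with a sharp exponential dependence on $\int_0^t\|\nabla v\|_{L^\infty}$ so as to stay within the tower $\Phi_6$. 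A secondary point is that the target regularity $s=\tfrac52$ for $v$ is exactly the endpoint $1+\tfrac3p$ (with $p=2$) of the transport estimate, which is why the propagation must be run on $\omega$ at $s=\tfrac32$ rather than directly on $v$.
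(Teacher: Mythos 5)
Your treatment of the density is exactly the paper's: frequency-localize the transport--diffusion equation, use the parabolic gain $e^{-ct2^{2j}}$ together with the commutator bound at $s=\tfrac12\in(-1,1)$ from Lemma \ref{m1}, and close by Gronwall with $V_1(t)\lesssim\Phi_5(t)$ to get $\Vert\rho\Vert_{\widetilde{L}^\infty_tB^{1/2}_{2,1}}+\Vert\rho\Vert_{\widetilde{L}^1_tB^{5/2}_{2,1}}\lesssim\Phi_6(t)$. For the velocity you diverge from the paper: you propagate $\omega$ at level $B^{3/2}_{2,1}$ through the vorticity equation and recover $v$ by Biot--Savart, whereas the paper works directly on the frequency-localized velocity equation, kills the pressure by the $L^2$ pairing with $\Delta_jv$ (using $\operatorname{div}\Delta_jv=0$), and applies Lemma \ref{m1} with $\eta=v$ at $s=\tfrac52$, so that both commutator contributions are $\Vert\nabla v\Vert_{L^\infty}\Vert v\Vert_{B^{5/2}_{2,1}}$ and Gronwall closes immediately. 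In particular your final remark is off: $s=\tfrac52=1+\tfrac{3}{p}$ with $r=1$ is precisely the endpoint the paper's Lemma \ref{m1} (case $s\ge1$) is designed for, so the detour through $\omega$ is not forced; its only payoff is avoiding the pressure, at the price of having to estimate the stretching term.

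There is one genuine gap in your Step 2. The assertion that ``the version of Proposition \ref{prop2} valid for arbitrary $s$ in $(-\tfrac32,\tfrac52)$ applies with $V_1$ in the exponent'' is false for $s\ge1$: a merely Lipschitz velocity cannot propagate $B^{s}_{p,r}$ regularity of a general transported quantity once $s>1$ (the flow map is only bi-Lipschitz), and the standard statement for $1\le s<1+\tfrac3p$ requires $\Vert\nabla v\Vert_{B^{3/p}_{p,\infty}\cap L^\infty}$ in the exponent --- which here is comparable to the very norm $\Vert\omega\Vert_{B^{3/2}_{2,1}}$ you are trying to bound, so a naive Gronwall would not close. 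The step is repairable, but only by exploiting that the transported quantity is $\omega=\operatorname{curl}v$: in the commutator $[\Delta_j,v\cdot\nabla]\omega$ the dangerous paraproduct $T_{\nabla\omega}v$ must be estimated as $\Vert T_{\partial_k\omega}v^k\Vert_{B^{3/2}_{2,1}}\lesssim\Vert\omega\Vert_{L^\infty}\Vert v\Vert_{B^{5/2}_{2,1}}\lesssim\Vert\omega\Vert_{L^\infty}\big(\Vert v\Vert_{L^2}+\Vert\omega\Vert_{B^{3/2}_{2,1}}\big)$ (not as $\Vert\nabla\omega\Vert_{L^\infty}\Vert v\Vert_{B^{3/2}_{2,1}}$, which is uncontrolled), after which $\Vert\omega\Vert_{L^\infty}\lesssim\Phi_4(t)$ from Proposition \ref{prop a3} makes Gronwall close. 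You should either supply this refined commutator estimate explicitly or switch to the paper's direct energy argument on $v$, where the issue does not arise.
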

Recall that for every $T>0,$ $\rho\ge 1,$ $(p,r)\in[1,\infty]^{2}$ and $s\in\RR$ the Chemin-Lerner space $\widetilde{L}_{T}^{\rho}B_{p,r}^{s}$ is defined as the set of all distribution $f$ satisfying
$$\Vert f\Vert_{\widetilde{L}_{T}^{\rho}B_{p,r}^{s}}:=\Big\Vert (2^{qs}\Vert\Delta_{q}f\Vert_{L_{T}^{\rho}L^p})_{q}\Big\Vert_{\ell^r}.$$
\begin{proof}
We localize in frequency the equation of the velocity, then we have for every $j \ge -1,$
$$\partial_{t}\Delta_{j} v+v\cdot\nabla\Delta_{j} v+\nabla\Delta_{j}p=\Delta_{j}\rho e_{z}-[\Delta_{j},v\cdot\nabla]v.$$ 
Taking the $L^2$- scalar product with $\Delta_{j}v$ and using H\"older inequality,
$$\frac{1}{2}\frac{d}{dt}\Vert\Delta_{j} v(t)\Vert^{2}_{L^2}\le \Vert\Delta_{j} v(t)\Vert_{L^2}\Vert\Delta_{j} \rho(t)\Vert_{L^2}+\Vert\Delta_{j} v(t)\Vert_{L^2}\Vert[\Delta_{j},v\cdot\nabla]v(t)\Vert_{L^2}.$$
This implies that
$$\frac{d}{dt}\Vert\Delta_{j} v(t)\Vert_{L^2}\le \Vert\Delta_{j}\rho(t)\Vert_{L^2}+ \Vert[\Delta_{j},v\cdot\nabla]v(t)\Vert_{L^2}.$$
Integrating in time we obtain
$$\Vert\Delta_{j} v(t)\Vert_{L^2}\le \Vert\Delta_{j} v^{0}\Vert_{L^2}+\Vert\Delta_{j}\rho(t)\Vert_{L_{t}^{1} L^2}+ \Vert[\Delta_{j},v\cdot\nabla]v(t)\Vert_{L_{t}^{1}L^2}.$$
Multiplying the above inequality by $2^{\frac{5}{2}j}$ and taking the $\ell^{1}$ -norm we obtain thus
\begin{equation}\label{s6}
\Vert v(t)\Vert_{B^{\frac{5}{2}}_{2,1}}\le \Vert v^{0}\Vert_{B^{\frac{5}{2}}_{2,1}}+\Vert\rho(t)\Vert_{\widetilde{L}_{t}^{1}B^{\frac{5}{2}}_{2,1}}+\Vert(2^{\frac{5}{2}j}\Vert[\Delta_{j},v\cdot\nabla]v(t)\Vert_{L_{t}^{1}L^{2}})_{j}\Vert_{\ell^{1}}.
\end{equation}
To estimate the commutator term, we use the following Lemma (see \cite{che98} for the proof).
\begin{lem} \label{m1}
Let $\eta$ be a smooth function and $v$ be a smooth vector field of $\RR^{3}$ with zero divergence. Then we have for all $1 \le p \le \infty$  and $s \ge -1,$
\begin{equation*}
\sum_{j \ge -1}2^{js}\Vert[\Delta_{j}, v\cdot\nabla]\eta\Vert_{L^p}\lesssim\left\{ 
\begin{array}{ll} 
\Vert\nabla v\Vert_{L^\infty}\Vert\eta\Vert_{B_{p,1}^s}\quad\hbox{if}\quad -1<s< 1\\ 
\Vert\nabla v\Vert_{L^\infty}\Vert\eta\Vert_{B_{p,1}^s}+\Vert\nabla\eta\Vert_{L^\infty}\Vert v \Vert_{B_{p,1}^s} \quad\hbox{if}\quad 1\le s
\end{array} \right.
\end{equation*}
\end{lem}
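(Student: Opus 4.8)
The plan is to reduce the estimate to Bony's paraproduct calculus \eqref{j} applied to $v\cdot\nabla\eta=\sum_m v^m\partial_m\eta$, isolating the one genuinely new object—the commutator of $\Delta_j$ with the paraproduct operator $T_{v^m}$—from the purely bilinear paraproduct and remainder pieces. Writing $v^m\partial_m\eta=T_{v^m}\partial_m\eta+T_{\partial_m\eta}v^m+R(v^m,\partial_m\eta)$, doing the same for $v^m\partial_m\Delta_j\eta$, and subtracting (using $\partial_m\Delta_j=\Delta_j\partial_m$), I would split
$$[\Delta_j,v\cdot\nabla]\eta=\sum_m[\Delta_j,T_{v^m}]\partial_m\eta+\sum_m\big(\Delta_jT_{\partial_m\eta}v^m-T_{\partial_m\Delta_j\eta}v^m\big)+\sum_m\big(\Delta_jR(v^m,\partial_m\eta)-R(v^m,\partial_m\Delta_j\eta)\big),$$
estimate each group in $L^p$, then multiply by $2^{js}$ and take the $\ell^1$ sum over $j$.

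The clean part is the first group. Since $T_{v^m}\partial_m\eta=\sum_q S_{q-1}v^m\,\Delta_q\partial_m\eta$ is spectrally localized, only the indices $|q-j|\le4$ survive under $\Delta_j$, and I would use $\Delta_j f=2^{3j}h(2^j\cdot)\ast f$ to write
$$[\Delta_j,S_{q-1}v^m]\Delta_q\partial_m\eta(x)=\int_{\RR^3}2^{3j}h\big(2^j(x-y)\big)\big(S_{q-1}v^m(y)-S_{q-1}v^m(x)\big)\Delta_q\partial_m\eta(y)\,dy.$$
A first-order Taylor expansion bounds the bracket by $|x-y|\,\Vert\nabla S_{q-1}v^m\Vert_{L^\infty}\le|x-y|\,\Vert\nabla v\Vert_{L^\infty}$, and the factor $|x-y|$ against the kernel produces the gain $2^{-j}$; together with Lemma \ref{Bernstein} ($\Vert\Delta_q\partial_m\eta\Vert_{L^p}\lesssim2^q\Vert\Delta_q\eta\Vert_{L^p}$) and $|q-j|\le4$ this gives $\Vert[\Delta_j,T_{v^m}]\partial_m\eta\Vert_{L^p}\lesssim\Vert\nabla v\Vert_{L^\infty}\sum_{|q-j|\le4}\Vert\Delta_q\eta\Vert_{L^p}$, hence $\lesssim\Vert\nabla v\Vert_{L^\infty}\Vert\eta\Vert_{B^s_{p,1}}$ after summation, for every $s$.

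In the remaining groups $v$ is always the high-frequency factor, so its $L^p$ norm must be kept; the point is to produce $\Vert\nabla v\Vert_{L^\infty}$ rather than $\Vert\nabla\eta\Vert_{L^\infty}$ by distributing the derivative and using $\Vert\Delta_q v\Vert_{L^\infty}\lesssim2^{-q}\Vert\nabla v\Vert_{L^\infty}$. For $T_{\partial_m\Delta_j\eta}v^m$ (which forces $q\ge j+2$) the sum $\sum_{q\ge j+2}2^{-q}\sim2^{-j}$ converges unconditionally and yields $\Vert\nabla v\Vert_{L^\infty}\Vert\eta\Vert_{B^s_{p,1}}$ for all $s$; for the remainders I would use $\textnormal{div}\,v=0$ to rewrite $\sum_m R(v^m,\partial_m\eta)=\textnormal{div}\sum_q\Delta_q v\,\widetilde{\Delta}_{q}\eta$, so the extra derivative lands outside as a factor $2^j$ and the dyadic sum converges exactly when $s>-1$. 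The single delicate term is $\Delta_jT_{\partial_m\eta}v^m$: pairing $\Delta_q v$ in $L^\infty$ with $S_{q-1}\partial_m\eta$ in $L^p$ leads to $\Vert\nabla v\Vert_{L^\infty}$ times a series $\sum_j2^{j(s-1)}(\cdots)$, which converges only for $s<1$ and then reproduces $\Vert\nabla v\Vert_{L^\infty}\Vert\eta\Vert_{B^s_{p,1}}$; for $s\ge1$ it diverges, forcing the opposite pairing, which produces the extra term $\Vert\nabla\eta\Vert_{L^\infty}\Vert v\Vert_{B^s_{p,1}}$.

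I expect the main obstacle to be precisely this bookkeeping of the dyadic summations: checking that the three thresholds—the lower bound $s>-1$ from the divergence-form remainder, the unconditional bound for $T_{\partial_m\Delta_j\eta}v^m$, and the upper bound $s<1$ from $\Delta_jT_{\partial_m\eta}v^m$—combine to give exactly the stated dichotomy, and treating the low-frequency block $q=-1$ separately, where $\Vert\Delta_{-1}v\Vert_{L^\infty}$ is not controlled by $\Vert\nabla v\Vert_{L^\infty}$ and must be absorbed into the low-frequency part rather than the commutator gain.
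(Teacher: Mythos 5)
The paper offers no proof of this lemma to compare against: it is quoted as a known result with a pointer to Chemin's book \cite{che98}. Your outline is the standard proof of that classical commutator estimate, and it is essentially correct: the splitting into $[\Delta_j,T_{v^m}]\partial_m\eta$, the two terms where $v$ carries the high frequency, and the remainder difference is the canonical one; the first-order Taylor expansion against the kernel $2^{3j}h(2^j\cdot)$ produces the $2^{-j}$ gain and the factor $\Vert\nabla v\Vert_{L^\infty}$; the divergence-free hypothesis enters exactly where it must (to put $\sum_m R(v^m,\partial_m\eta)$ in divergence form, which is what creates the lower threshold $s>-1$); and the dichotomy at $s=1$ does come from the choice of pairing in $\Delta_j T_{\partial_m\eta}v^m$, as you say. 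Three points to tighten in a full write-up. First, in $T_{\partial_m\Delta_j\eta}v^m$ the spectral constraint is only $q\ge j-C$ rather than $q\ge j+2$, which changes nothing since $\sum_{q\gtrsim j}2^{-q}\sim 2^{-j}$ still, and since $S_{q-1}=0$ for $q\le 0$ only blocks $\Delta_q v$ with $q\ge 1$ occur there, so Bernstein legitimately converts $\Vert\Delta_q v\Vert_{L^\infty}$ into $2^{-q}\Vert\nabla v\Vert_{L^\infty}$. Second, the low-frequency block you flag at the end is indeed the one delicate spot: for the finitely many $j$ where $\Delta_{-1}v$ appears you should keep the commutator structure $[\Delta_j,\Delta_{-1}v^m]$ acting on $\widetilde{\Delta}_{-1}\partial_m\eta$ (note $\Delta_j$ and $\widetilde{\Delta}_{-1}$ commute) and rerun the Taylor argument with $\Vert\nabla\Delta_{-1}v\Vert_{L^\infty}\lesssim\Vert\nabla v\Vert_{L^\infty}$, rather than estimating the two pieces separately, since $\Vert\Delta_{-1}v\Vert_{L^\infty}$ alone is not controlled by the right-hand side. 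Third, your remainder computation correctly shows that the $\ell^1$ summation needs $s>-1$ strictly; the endpoint $s=-1$ written in the hypothesis of the lemma is in fact excluded from both alternatives of its conclusion, so there is no conflict with your proof, only with the phrasing of the statement.
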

Therefore we obtain in \eqref{s6},
\begin{equation*}
\Vert v(t)\Vert_{B^{\frac{5}{2}}_{2,1}}\le \Vert v^{0}\Vert_{B^{\frac{5}{2}}_{2,1}}+\Vert\rho(t)\Vert_{\widetilde{L}_{t}^{1}B^{\frac{5}{2}}_{2,1}}+C\int_{0}^{t}\Vert\nabla v(\tau) \Vert_{L^\infty}\Vert v(\tau)\Vert_{B^{\frac{5}{2}}_{2,1}}d\tau.
\end{equation*}
Using Gronwall's inequality we get
\begin{eqnarray}\label{s7}
\nonumber \Vert v(t)\Vert_{B^{\frac{5}{2}}_{2,1}}&\lesssim& (\Vert v^{0}\Vert_{B^{\frac{5}{2}}_{2,1}}+\Vert\rho(t)\Vert_{\widetilde{L}_{t}^{1}B^{\frac{5}{2}}_{2,1}})e^{C \int_{0}^{t}\Vert\nabla v(\tau) \Vert_{L^\infty}d\tau}\\
&\lesssim& (\Vert v^{0}\Vert_{B^{\frac{5}{2}}_{2,1}}+\Vert\rho(t)\Vert_{\widetilde{L}_{t}^{1}B^{\frac{5}{2}}_{2,1}}) \Phi_{6}(t),
\end{eqnarray}
where we have used Proposition \ref{prop a6}. It remains then to estimate $\Vert\rho(t)\Vert_{\widetilde{L}_{t}^{1}B^{\frac{5}{2}}_{2,1}}.$ For this purpose we localize in  frequency the equation of the density. For $j\ge 0$ we have,
$$\partial_{t}\Delta_{j}\rho+v\cdot\nabla\Delta_{j}\rho-\Delta\Delta_{j}\rho=-[\Delta_{j},v\cdot\nabla]\rho.$$    
Taking again the $L^2$- scalar product with $\Delta_{j}\rho$ and using H\"older's inequality,
$$\frac{1}{2}\frac{d}{dt}\Vert\Delta_{j}\rho(t)\Vert^{2}_{L^2}-\int_{\RR^3}(\Delta\Delta_{j}\rho)\Delta_{j}\rho dx \le 
\Vert\Delta_{j}\rho(t)\Vert_{L^2}\Vert[\Delta_{j},v\cdot\nabla]\rho(t)\Vert_{L^2}.$$
Using now the generalized Bernstein inequality see \cite{d01,gl02}
$$\frac{1}{2}2^{2j}\Vert\Delta_{j}\rho(t)\Vert^{2}_{L^2}\le -\int_{\RR^3}(\Delta\Delta_{j}\rho)\Delta_{j}\rho dx.$$
Hence,
$$\frac{d}{dt}\Vert\Delta_{j}\rho(t)\Vert_{L^2}+c 2^{2j}\Vert\Delta_{j}\rho(t)\Vert_{L^2}\lesssim \Vert[\Delta_{j},v\cdot\nabla]\rho(t)\Vert_{L^2}.$$
This gives,
$$\frac{d}{dt}(e^{ct 2^{2j}}\Vert\Delta_{j}\rho(t)\Vert_{L^2})\le e^{ct 2^{2j}} \Vert[\Delta_{j},v\cdot\nabla]\rho(t)\Vert_{L^2}.$$
It follows that,
\begin{equation}\label{s8}
\Vert\Delta_{j}\rho(t)\Vert_{L^2}\lesssim e^{-ct 2^{2j}}\Vert\Delta_{j}\rho^{0}\Vert_{L^2}+\int_{0}^{t}e^{-c(t-\tau)2^{2j}}
\Vert[\Delta_{j},v\cdot\nabla]\rho(\tau)\Vert_{L^2}d\tau
\end{equation}
Integrating in time implies that
\begin{equation}\label{s9}
\Vert\Delta_{j}\rho(t)\Vert_{L_{t}^{1}L^2}\lesssim 2^{-2j}\Vert\Delta_{j}\rho^{0}\Vert_{L^2}+2^{-2j}
\Vert[\Delta_{j},v\cdot\nabla]\rho(t)\Vert_{L_{t}^{1}L^2}.
\end{equation}
From \eqref{s8} and \eqref{s9} we obtain for $j \ge 0$
$$\Vert\Delta_{j}\rho(t)\Vert_{L_{t}^{\infty}L^2}+2^{2j}\Vert\Delta_{j}\rho(t)\Vert_{L_{t}^{1}L^2}\lesssim \Vert\Delta_{j}\rho^{0}\Vert_{L^2}+\Vert[\Delta_{j},v\cdot\nabla]\rho(t)\Vert_{L_{t}^{1}L^2}.$$
Multiplying the above inequality by $2^{\frac{j}{2}}$ and taking the $\ell^1$ norm we find,
\begin{eqnarray*}
\Vert\rho(t)\Vert_{\widetilde{L}_{t}^{\infty}B^{\frac{1}{2}}_{2,1}}+\Vert\rho(t)\Vert_{\widetilde{L}_{t}^{1}B^{\frac{5}{2}}_{2,1}}&\lesssim& \Vert\Delta_{-1}\rho(t)\Vert_{L_{t}^{1}L^2}+\Vert\rho^{0}\Vert_{B^{\frac{1}{2}}_{2,1}}+\Vert(2^{\frac{j}{2}}\Vert[\Delta_{j},v\cdot\nabla]\rho(t)\Vert_{L_{t}^{1}L^{2}})_{j}\Vert_{\ell^{1}}\\
&\lesssim& t\Vert\rho^{0}\Vert_{L^2}+\Vert\rho^{0}\Vert_{B^{\frac{1}{2}}_{2,1}}+\int_{0}^{t}\Vert\nabla v(\tau) \Vert_{L^\infty}\Vert\rho(\tau)\Vert_{B^{\frac{1}{2}}_{2,1}}d\tau\\
&\lesssim& \Vert\rho^{0}\Vert_{B^{\frac{1}{2}}_{2,1}}(1+t)+\int_{0}^{t}\Vert\nabla v(\tau) \Vert_{L^\infty}\Vert\rho(\tau)\Vert_{\widetilde{L}_{\tau}^{\infty}B^{\frac{1}{2}}_{2,1}}d\tau\\
&\lesssim& C_{0}(1+t)+\int_{0}^{t}\Phi_{5}(\tau)(\Vert\rho(\tau)\Vert_{\widetilde{L}_{\tau}^{\infty}B^{\frac{1}{2}}_{2,1}}+\Vert\rho(\tau)\Vert_{\widetilde{L}_{\tau}^{1}B^{\frac{5}{2}}_{2,1}})d\tau,
\end{eqnarray*}
where we have used Lemma \ref{m1} and Proposition \ref{prop a6}.\\
Finally using Gronwall's inequality we find
\begin{equation}\label{s10}
\Vert\rho(t)\Vert_{\widetilde{L}_{t}^{\infty}B^{\frac{1}{2}}_{2,1}}+\Vert\rho(t)\Vert_{\widetilde{L}_{t}^{1}B^{\frac{5}{2}}_{2,1}}\lesssim \Phi_{6}(t).
\end{equation}
Putting now \eqref{s10} into \eqref{s7} we find
$$\Vert v(t)\Vert_{B^{\frac{5}{2}}_{2,1}}\lesssim \Phi_{6}(t).$$
The proof of the Proposition is now complete.
\end{proof}
\subsection{Uniqueness result} We will prove the uniqueness result for the system \eqref{a} in the following space
$$\mathcal{A}_{T}:=(L_T^{\infty}L^2\cap L_T^{1}\dot{W}^{1,\infty})\times (L_T^{\infty}L^2\cap L_T^{1}\dot{W}^{1,\infty}).$$
We take two solutions $(v_j,\rho_j)$, with $j=1,2$ for $\textnormal{(1.1)}$ belonging to the space $\mathcal{A}_{T}$ for a fixed time $T>0$, with initial data $(v^0_j,\rho^0_j)$, $j=1,2$ and we denote
$$v=v_2-v_1\qquad\textnormal{and}\qquad \rho=\rho_2-\rho_1.$$
Then we find the equations
\begin{equation}\label{s11} 
\left\{ \begin{array}{ll} 
\partial_{t} v+v_{2}\cdot\nabla v+\nabla p=- v\cdot\nabla v_{1}+\rho e_{z}\\ 
\partial_{t}\rho+v_{2}\cdot\nabla\rho-\Delta\rho=-v\cdot\nabla\rho_{1}\\
v_{| t=0}= v^{0}, \quad \rho_{| t=0}=\rho^{0}.  
\end{array} \right.
\end{equation}  
Taking the $L^2$ inner product of the first equation of \eqref{s11} with $v$, integrating by parts and using H\"older inequality, we get
$$\frac{1}{2}\frac{d}{dt}\Vert v(t)\Vert^{2}_{L^2}\le \Vert v(t)\Vert^{2}_{L^2}\Vert\nabla v_{1}(t)\Vert_{L^\infty}+\Vert v(t)\Vert_{L^2}\Vert\rho(t)\Vert_{L^2}.$$
Integrating in time we get
\begin{equation*}
\Vert v(t)\Vert^{2}_{L^2}\le\Vert v^{0}\Vert^{2}_{L^2}+2\int_{0}^{t}\Vert v(\tau)\Vert^{2}_{L^2}\Vert\nabla v_{1}(\tau)\Vert_{L^\infty}d\tau+2\int_{0}^{t}\Vert\rho(\tau)\Vert_{L^2}\Vert v(\tau)\Vert_{L^2}d\tau.
\end{equation*}
Then we get
\begin{equation}\label{10}
\Vert v(t)\Vert^{2}_{L^2}\le\Vert v^{0}\Vert^{2}_{L^2}+2\int_{0}^{t}\Vert v(\tau)\Vert^{2}_{L^2}\Vert\nabla v_{1}(\tau)\Vert_{L^\infty}d\tau+2\int_{0}^{t}(\Vert\rho(\tau)\Vert^{2}_{L^2}+\Vert v(\tau)\Vert^{2}_{L^2})d\tau.
\end{equation}
As above, taking again the $L^2$ inner product of the second equation of \eqref{s11} with $\rho$ and integrating by parts, we get finally
\begin{eqnarray}\label{11}
\nonumber \Vert\rho(t)\Vert^{2}_{L^2}&\le& \Vert\rho^{0}\Vert^{2}_{L^2}+2\int_{0}^{t}\Vert v(\tau)\Vert_{L^2}\Vert \rho(\tau)\Vert_{L^2}\Vert\nabla\rho_{1}(\tau)\Vert_{L^\infty}d\tau\\
&\le& \Vert\rho^{0}\Vert^{2}_{L^2}+2\int_{0}^{t}\Big(\Vert v(\tau)\Vert^{2}_{L^2}+2\Vert \rho(\tau)\Vert^{2}_{L^2}\Big)\Vert\nabla\rho_{1}(\tau)\Vert_{L^\infty}d\tau
\end{eqnarray}
Putting $g(t)=\Vert v(t)\Vert^{2}_{L^2}+\Vert\rho(t)\Vert^{2}_{L^2},$ we obtain from \eqref{10} and \eqref{11} that,
$$g(t)\le g^{0}+2\int_{0}^{t}\Big(\Vert\nabla v_{1}(t)\Vert_{L^\infty}+\Vert\nabla\rho_{1}(t)\Vert_{L^\infty}+1\Big)g(\tau)d\tau.$$
Gronwall inequality yields
\begin{equation}\label{8}
g(t)\le g^{0} \exp\Big(2({\Vert\nabla v_{1}(t)\Vert_{L_{t}^{1}L^\infty}+\Vert\nabla\rho_{1}(t)\Vert_{L_{t}^{1}L^\infty}+t})\Big).
\end{equation}
This proves the uniqueness result.
\subsection{Existence} We will now construct a global solution for the Boussinesq system \eqref{a}. First we smooth out initial data
$$v^{n}_{0}=S_{n} v^0\;\;\textnormal{and}\;\;\rho^{n}_{0}=S_{n}\rho^0.$$ 
By definition of the operator $S_n$ : there is a positive radial function $\chi\in\mathcal{D}(\RR^3)$ such that
$$v^{n}_{0}:=S_{n} v^0=2^{3n}\chi(2^{n}\cdot)\ast v^0$$
and
$$\rho^{n}_{0}:=S_{n}\rho^0=2^{3n}\chi(2^{n}\cdot)\ast \rho^0,$$
We can easily prove the following result.
\begin{lem}\label{lem 58}
Let $v^{0}\in B_{2,1}^{\frac{5}{2}}$ be an axisymmetric vector field with zero divergence and such that $\frac{\omega_{0}}{r}\in L^{3,1}$ and let $\rho^{0}\in B^{\frac{1}{2}}_{2,1}\cap L^{p}$ with $p>6$ be an axisymmetric function such that $\vert x_{h}\vert^{2}\rho^{0}\in L^2.$ Then for every $n\in\NN^{\ast},$ the functions $v^{n}_{0}$ and $\rho^{n}_{0}$ are axisymmetric  and $\textnormal{div}\,v^{n}_{0}=0.$ Moreover there exist a constant $C$ such that,
\begin{eqnarray*}
\Vert v^{n}_{0}\Vert_{B^{\frac{5}{2}}_{2,1}}\le C\Vert v^{0}\Vert_{B^{\frac{5}{2}}_{2,1}},\;\;\;\Vert\frac{\omega^{n}_{0}}{r}\Vert_{L^{3,1}}\le C\Vert\frac{\omega_{0}}{r}\Vert_{L^{3,1}},\;\;\;\Vert\rho^{n}_{0}\Vert_{B^{\frac{1}{2}}_{2,1}\cap L^{p}}\le C\Vert\rho^{0}\Vert_{B^{\frac{1}{2}}_{2,1}\cap L^{p}}\\\textnormal{and}\;\;\;\; \sup_{n\in\NN}\Vert\vert x_{h}\vert^{2}\rho_{0}^{n}\Vert_{L^2}\le C(\Vert\rho^{0}\Vert_{L^2}+\Vert\vert x_{h}\vert^{2}\rho^{0}\Vert_{L^2}).
\end{eqnarray*}
\end{lem}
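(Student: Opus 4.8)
The plan is to verify the four groups of assertions in turn: the structural properties (axisymmetry, absence of swirl, incompressibility), the uniform Besov and Lebesgue bounds, the weighted $L^2$ bound, and finally the Lorentz bound on $\frac{\omega^n_0}{r}$, which is the only point requiring real work. First I would record that $S_n$ is convolution with the kernel $K_n:=2^{3n}\chi(2^n\cdot)$, which is \emph{radial} and has $L^1$-norm $\Vert\chi\Vert_{L^1}$ independent of $n$. A radial kernel is invariant under every element of the orthogonal group, so $S_n$ commutes both with rotations about the $z$-axis and with reflections through the planes containing it; since these are exactly the symmetries characterizing axisymmetric vector fields without swirl (and axisymmetric scalars), the fields $v^n_0=S_nv^0$ and $\rho^n_0=S_n\rho^0$ are again axisymmetric without swirl. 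As $S_n$ commutes with every partial derivative, $\textnormal{div}\,v^n_0=S_n\,\textnormal{div}\,v^0=0$ and $\omega^n_0:=\textnormal{curl}\,v^n_0=S_n\omega^0$; this last identity is used throughout.

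The Besov and Lebesgue bounds are soft. Young's inequality gives $\Vert S_nf\Vert_{L^q}\le\Vert\chi\Vert_{L^1}\Vert f\Vert_{L^q}$ for every $q$, hence $\Vert\rho^n_0\Vert_{L^p}\le C\Vert\rho^0\Vert_{L^p}$. Because $S_n$ commutes with the blocks $\Delta_j$ and $\Vert\Delta_jS_nf\Vert_{L^2}\lesssim\Vert\Delta_jf\Vert_{L^2}$ uniformly in $j,n$ (indeed $\Delta_jS_n=\Delta_j$ for $j\le n-2$ and $\Delta_jS_n=0$ for $j\ge n+1$), the same estimate yields $\Vert v^n_0\Vert_{B^{\frac{5}{2}}_{2,1}}\le C\Vert v^0\Vert_{B^{\frac{5}{2}}_{2,1}}$ and $\Vert\rho^n_0\Vert_{B^{\frac{1}{2}}_{2,1}}\le C\Vert\rho^0\Vert_{B^{\frac{1}{2}}_{2,1}}$.

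For the weighted bound I would insert the pointwise inequality $|x_h|^2\le 2|(x-z)_h|^2+2|z_h|^2$ into the convolution $\rho^n_0(x)=\int K_n(z)\rho^0(x-z)\,dz$. The first term produces $|K_n|\ast\big(|\cdot_h|^2|\rho^0|\big)$, whose $L^2$-norm is at most $\Vert\chi\Vert_{L^1}\Vert|x_h|^2\rho^0\Vert_{L^2}$, while the second produces $\big(|\cdot_h|^2|K_n|\big)\ast|\rho^0|$, whose $L^2$-norm is $\lesssim 2^{-2n}\Vert\rho^0\Vert_{L^2}$ since $K_n$ has second moment $O(2^{-2n})$. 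Summing over the two horizontal coordinates gives $\sup_n\Vert|x_h|^2\rho^n_0\Vert_{L^2}\le C(\Vert\rho^0\Vert_{L^2}+\Vert|x_h|^2\rho^0\Vert_{L^2})$.

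The main obstacle is the uniform estimate $\Vert\frac{\omega^n_0}{r}\Vert_{L^{3,1}}\le C\Vert\frac{\omega^0}{r}\Vert_{L^{3,1}}$, in which the singular factor $\frac1r$ does not commute with $S_n$. Setting $\beta^0:=\frac{\omega^0}{r}$, so that $\omega^0=\beta^0(-x_2,x_1,0)$ in Cartesian coordinates, and using that $\omega^n_0=S_n\omega^0$ is again purely azimuthal, one computes that $\frac{\omega^n_0}{r}$ is the integral operator applied to $\beta^0$ with kernel
\begin{equation*}
\mathcal K_n(x,y)=\frac{x_h\cdot y_h}{|x_h|^2}\,K_n(x-y).
\end{equation*}
Away from the axis this kernel is harmless, but near $r=0$ the prefactor is of size $\frac1r$, and a crude Schur estimate of $\sup_x\int|\mathcal K_n(x,y)|\,dy$ genuinely fails. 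The key, and the place where the axisymmetric geometry is indispensable, is that for axisymmetric $\beta^0$ one may integrate out the angular variable first, and the angular mean of $x_h\cdot y_h$ against the radial kernel vanishes to first order in $r$, cancelling the $\frac1r$ singularity and leaving a kernel integrable uniformly in $n$. This yields uniform boundedness on $L^2$ and $L^\infty$ of the restriction of the operator to axisymmetric functions, hence on the interpolation space $L^{3,1}$. I expect this cancellation, which I would carry out following \cite{ahs08}, to be the step requiring the most care.
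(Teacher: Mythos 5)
Your proposal is correct, and on three of the four groups of assertions (axisymmetry and incompressibility via the radial kernel, the $L^p$ and Besov bounds via Young's inequality and the almost-orthogonality of $\Delta_j S_n$, and the weighted bound via the splitting $\vert x_h\vert^2\le 2\vert (x-z)_h\vert^2+2\vert z_h\vert^2$) it coincides with the paper's proof essentially line by line. The one place where you genuinely diverge is the Lorentz estimate, and there your treatment is the more substantive one: the paper simply writes $\Vert\frac{S_n\omega_0}{r}\Vert_{L^{3,1}}\le\Vert 2^{3n}\chi(2^n\cdot)\Vert_{L^1}\Vert\frac{\omega_0}{r}\Vert_{L^{3,1}}$ as a direct ``convolution inequality on Lorentz spaces,'' which tacitly identifies $\frac{S_n\omega_0}{r}$ with $S_n\big(\frac{\omega_0}{r}\big)$; since multiplication by $\frac1r$ does not commute with convolution, that step is not literally Young's inequality and does require the axisymmetric structure. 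You correctly isolate this as the only real difficulty, derive the exact kernel $\frac{x_h\cdot y_h}{\vert x_h\vert^2}K_n(x-y)$ acting on $\beta^0=\frac{\omega^{0,\theta}}{r}$, observe that a crude Schur bound fails near the axis, and invoke the first-order vanishing of the angular average to kill the $\frac1r$ singularity before interpolating $L^2$ and $L^\infty$ to reach $L^{3,1}$; this is precisely the mechanism of Proposition 3.2 of \cite{ahs08} (stated there for $\Delta_q$ in place of $S_n$, with an identical proof), to which you appropriately defer the computation. In short, your write-up supplies the justification that the paper's own proof elides; the only caveat is that the angular-cancellation step is sketched rather than carried out, so strictly speaking your argument is complete only modulo the cited result of \cite{ahs08}.
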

\begin{proof}
The fact that the functions $v^{n}_{0}$ and $\rho^{n}_{0}$ are axisymmetric is due to the radial property of the function $\chi$ and the fact that $\textnormal{div}\,v^{n}_{0}=0$ due to the condition incompressible of the vector field $v^0.$
Now we have the cut-off in frequency is uniformly bounded in Lesbesque and Sobolev spaces, that is by applying convolution inequality we immediate get
\begin{eqnarray*}
\Vert\rho^{n}_{0}\Vert_{L^{p}}&\le& \Vert 2^{3n}\chi(2^{n}\cdot)\Vert_{L^{1}}\Vert\rho^{0}\Vert_{L^p}\\
&\le& \Vert\chi\Vert_{L^1}\Vert\rho^{0}\Vert_{L^p}\\
&\le& C \Vert\rho^{0}\Vert_{L^p}.
\end{eqnarray*} 
and in the Besov space, we have
\begin{eqnarray*}
\Vert v^{n}_{0}\Vert_{B^{\frac{5}{2}}_{2,1}}&\le& \sum_{p\le n-1}\Vert\Delta_{p}v^{0}\Vert_{B^{\frac{5}{2}}_{2,1}}\\
&\le& \sum_{\vert j-p \vert\le 1}2^{\frac{5}{2}j}\sum_{p\le n-1}\Vert\Delta_{j}\Delta_{p}v^{0}\Vert_{L^{2}}\\
&\le& C \sum_{j}2^{\frac{5}{2}j}\Vert\Delta_{j}v^{0}\Vert_{L^{2}}\\
&\le& C\Vert v^{0}\Vert_{B^{\frac{5}{2}}_{2,1}}.
\end{eqnarray*} 
Similarly for $\Vert\rho^{n}_{0}\Vert_{B^{\frac{1}{2}}_{2,1}},$ we get
$$\Vert\rho^{n}_{0}\Vert_{B^{\frac{1}{2}}_{2,1}}\le C \sum_{j}2^{\frac{1}{2}j}\Vert\Delta_{j}\rho^{0}\Vert_{L^2}\le C\Vert\rho^{0}\Vert_{B^{\frac{1}{2}}_{2,1}}.$$
For the estimate of $\Vert\frac{\omega^{n}_{0}}{r}\Vert_{L^{3,1}}$, we use the convolution inequality on Lorentz space $L^{3,1},$ we obtain as before
\begin{eqnarray*}
\Vert\frac{\omega^{n}_{0}}{r}\Vert_{L^{3,1}}=\Vert\frac{S_{n}\omega_{0}}{r}\Vert_{L^{3,1}}&\le& \Vert 2^{3n}\chi(2^{n}\cdot)\Vert_{L^{1}}\Vert\frac{\omega_{0}}{r}\Vert_{L^{3,1}}\\
&\le& C \Vert\frac{\omega_{0}}{r}\Vert_{L^{3,1}}.
\end{eqnarray*}
We prove the uniform boundedness of the moment of the density. First we write,
\begin{eqnarray*}
\vert x_{h}\vert^{2}\Big\vert\rho^{n}_{0}(x)\Big\vert &=& \vert x_{h}\vert^{2}\Big\vert 2^{3n}\int_{\RR^3}\chi(2^{n}(x-y))\rho^{0}(y)dy\Big\vert\\
&\le& 2 2^{3n}\int_{\RR^3}\vert x_{h}-y_{h}\vert^{2}\chi(2^{n}(x-y))\rho^{0}(y)dy+2 2^{3n}\Big\vert\int_{\RR^3}\chi(2^{n}(x-y))\vert y_{h}\vert^{2}\rho^{0}(y)dy\Big\vert\\
&\le& 2 2^{-2n} (2^{3n}\chi_{1}(2^{n}\cdot)\ast\rho^{0})(x)+2(2^{3n}\vert\chi\vert(2^{n}\cdot)\ast(\vert y_{h}\vert^{2}\rho^{0}))(x),
\end{eqnarray*}
where $\chi_{1}(x)=\vert x_{h}\vert^{2}\chi(x).$ Now from convolution inequality we obtain,
$$\Vert\vert x_{h}\vert^{2}\rho^{n}_{0}\Vert_{L^2}\le C 2^{-2n}\Vert\rho^{0}\Vert_{L^2}+C\Vert\vert x_{h}\vert^{2}\rho^{0}\Vert_{L^2}.$$
This gives that,
$$\sup_{n\in\NN} \Vert\vert x_{h}\vert^{2}\rho^{n}_{0}\Vert_{L^2}\le C(\Vert\rho^{0}\Vert_{L^2}+\Vert\vert x_{h}\vert^{2}\rho^{0}\Vert_{L^2}).$$
\end{proof}
Let us now consider the following system
\begin{equation}\label{s15} 
\left\{ \begin{array}{ll} 
\partial_{t} v^n+v^{n}\cdot\nabla v^n+\nabla p_n=\rho^n e_{z}\\ 
\partial_{t}\rho^n+v^{n}\cdot\nabla\rho^n-\Delta\rho^n=0\\
\textnormal{div}\,v^{n}=0\\
v^{n}_{| t=0}=S_{n} v^{0}, \quad \rho^{n}_{| t=0}=S_{n}\rho^{0}.  
\end{array} \right.
\end{equation}
Lemma \ref{lem 58} gives that the initial data are smooths and axisymmetrics. Thus we can construct locally in time a unique solution $(v^n,\rho^n).$ This solution is globally defined since the Lipschitz norm of the velocity does not blow up in finite time by Proposition \ref{prop a6}. Once again from the a priori estimates we have
\begin{equation}\label{ss15}
\Vert v^{n}\Vert_{\widetilde{L}_T^\infty B_{2,1}^{\frac 52}}+\Vert\rho^{n}\Vert_{\widetilde{L}_T^\infty B_{2,1}^{\frac 12}}+\Vert\rho^{n}\Vert_{\widetilde{L}_T^1 B_{2,1}^{\frac 52}}\lesssim\Phi_{6}(T).
\end{equation}
The control is uniform with respect to the parameter $n.$ Thus it follows that up to an extraction the sequence $(v^n,\rho^n)_{n\in\NN}$ is weakly convergent to some $(v,\rho)$ belonging to $\widetilde{L}_T^\infty B_{2,1}^{\frac 52}\times\widetilde{L}_T^\infty B_{2,1}^{\frac 12}\cap\widetilde{L}_T^1 B_{2,1}^{\frac 52}.$
Now we will prove that this sequence $(v^n,\rho^n)_{n\in\NN}$ converges strongly to $(v,\rho)$ in the space $(L_T^\infty L^2)^2.$
Let $\xi_{n,n^{\prime}}:=v^{n}-v^{n^{\prime}}$ and $\eta_{n,n^{\prime}}:=\rho^{n}-\rho^{n^{\prime}}$ then according to the estimate \eqref{8} and Propositions \ref{prop a3} and \ref{prop a6}, we get 
\begin{eqnarray*}
\Vert\xi_{n,n^{\prime}}\Vert_{L_T^\infty L^2}+\Vert\eta_{n,n^{\prime}}\Vert_{L_T^\infty L^2}\le (\Vert S_{n} v^{0}-S_{n^{\prime}} v^{0}\Vert_{L^2}+\Vert S_{n}\rho^{0}-S_{n^{\prime}}\rho^{0}\Vert_{L^2})\Phi_{6}(t).
\end{eqnarray*} 
This shows that the family $(v^n,\rho^{n})_{n\in\NN}$ is Cauchy sequence in the space $(L_T^\infty L^2)^2.$ Hence it converges strongly to $(v,\rho).$ Combining this result with \eqref{ss15} and by interpolation argument, we can get the strong convergences of $(v^n,\rho^n)$ to $(v,\rho)$ in $L_{T}^{\infty}H^{s}\times L_{T}^{\infty}H^{s^\prime}$ with $0\le s<\frac{5}{2}$ and $0\le s^{\prime}<\frac{1}{2}.$ The passage to the limit in the linear parts can be checked for example in the weak sense. For the nonlinear term $v^{n}\cdot\nabla v^{n}$, we write
$$v^{n}\cdot\nabla v^{n}=\textnormal{div}(v^{n}\otimes v^{n}).$$
Now, since $v^{n}\rightarrow v$ in $L_{T}^{\infty}H^{s}$and by using the fact that $H^{s}\cap L^\infty$ with $s>0$ is an algebra, we obtain $v^{n}\otimes v^{n}\rightarrow v\otimes v$ in $L_{T}^{\infty}H^{s}$ and thus $\textnormal{div}(v^{n}\otimes v^{n})\rightarrow \textnormal{div}(v\otimes v)$ in $L_{T}^{\infty}H^{s-1}.$\\
On the other-hand, $v^{n}\cdot\nabla\rho^{n}=\textnormal{div}(v^{n}\rho^{n}),$ and since $v^{n}\rightarrow v$ in $L_{T}^{\infty}L^{\infty}$ and $\rho^{n}\rightarrow\rho$ in $L_{T}^{\infty}L^{2}$ then we get $v^{n}\rho^{n}\rightarrow v\rho$ in $L_{T}^{\infty}L^{2}$ and consequently $\textnormal{div}(v^{n}\rho^{n})\rightarrow\textnormal{div}(v\rho)$ in $L_{T}^{\infty}H^{-1}.$ 
This allows us to pass to the limit in the system \eqref{s15} and we get that $(v,\rho)$ is a solution of the system \eqref{a}.\\\\
Let us now sketch the proof of the contiuity in time of the velocity. Let $\varepsilon>0$, $N\in\NN^{*}$ and $T>0$, then for every $t,t^{\prime} \in \RR_+,$
\begin{eqnarray*}
\Vert v(t)-v(t^{\prime})\Vert_{B^{\frac{5}{2}}_{2,1}}&\le& \sum_{j\le N}2^{\frac{5}{2}j}\Vert\Delta_{j}(v(t)-v(t^{\prime}))\Vert_{L^2}+2 \sum_{j>N}2^{\frac{5}{2}j}\Vert\Delta_{j}v\Vert_{L^2}\\
&\lesssim& 2^{\frac{5}{2}N}\Vert v(t)-v(t^\prime) \Vert_{L^{2}}+2 \sum_{j>N}2^{\frac{5}{2}j}\Vert\Delta_{j}v\Vert_{L_{T}^{\infty}L^{2}}.
\end{eqnarray*}
Since $v \in \widetilde{L}_{T}^{\infty}B^{\frac{5}{2}}_{2,1}$, then there exists $N$ sufficiently large such that
$$\sum_{j>N}2^{\frac{5}{2}j}\Vert\Delta_{j}v\Vert_{L_{T}^{\infty}L^{2}}\le \frac{\varepsilon}{4}$$
Therefore, we have
\begin{equation}\label{s16}
\Vert v(t)-v(t^{\prime})\Vert_{B^{\frac{5}{2}}_{2,1}}\lesssim 2^{\frac{5}{2}N}\Vert v(t)-v(t^\prime) \Vert_{L^{2}}+\frac{\varepsilon}{2}.
\end{equation}
It remains then to estimate $\Vert v(t)-v(t^\prime) \Vert_{L^{2}}.$ For this purpose we use the velocity equation, we have 
\begin{equation*}
\partial_{t}v=-\mathcal{P}(v\cdot\nabla v)+\mathcal{P}(\rho e_{z}).
\end{equation*}
Where $\mathcal{P}$ denote Leray projector. The solution of this equation is given by Duhamel formula,
$$v(t,x)=v^{0}(x)-\int^{t}_{0} \mathcal{P}(v\cdot\nabla v)(\tau) d\tau+\int^{t}_{0} \mathcal{P}\rho(\tau) d\tau.$$
Hence it follows that for $t,t^{\prime}\in \RR_+,$
$$v(t,x)-v(t^\prime, x)=-\int^{t}_{t^\prime} \mathcal{P}(v\cdot\nabla v)(\tau) d\tau+\int^{t}_{t^\prime} \mathcal{P}\rho(\tau) d\tau.$$
Taking the $L^2$ norm of the above equation and using the fact that the Leray projector $\mathcal{P}$ is continue into $L^2$, we get with $t^{\prime}\le t$ that 
\begin{eqnarray}\label{s17}
\nonumber \Vert v(t)-v(t^\prime) \Vert_{L^2}&\le& \int_{t^\prime}^{t}\Vert\mathcal{P}(v\cdot\nabla v)(\tau)\Vert_{L^2} d\tau+\int_{t^\prime}^{t}\Vert\mathcal{P}\rho(\tau)\Vert_{L^2}d\tau\\
\nonumber &\lesssim& \int_{t^\prime}^{t}\Vert v(\tau)\Vert_{L^2}\Vert\nabla v(\tau)\Vert_{L^\infty}d\tau+\int_{t^\prime}^{t}\Vert\rho(\tau)\Vert_{L^2}d\tau\\
\nonumber &\lesssim& \int_{t^\prime}^{t}\Vert v(\tau)\Vert_{L^2}\Vert\nabla v(\tau)\Vert_{L^\infty}d\tau+\int_{t^\prime}^{t}\Vert\rho^{0}\Vert_{L^{2}}d\tau\\
&\lesssim& \vert t-t^\prime \vert\big(\Vert v \Vert_{L_{t}^{\infty}L^2}\Vert\nabla v \Vert_{L_{t}^{\infty}L^\infty}+\Vert\rho^{0}\Vert_{L^2}\big).
\end{eqnarray}
We have used H\"older inequality and integration by parts for the first term of the above inequality and Proposition \ref{prop a1}-(a) for the second.\\
Putting now \eqref{s17} into \eqref{s16} and using Propositions \ref{prop a1} and \ref{prop a6}, we get
\begin{eqnarray*}
\Vert v(t)-v(t^{\prime})\Vert_{B^{\frac{5}{2}}_{2,1}}&\lesssim& 2^{\frac{5}{2}N}\vert t-t^{\prime} \vert \big(\Vert v \Vert_{L_{t}^{\infty}L^{2}}\Vert\nabla v \Vert_{L_{t}^{\infty} L^{\infty}}+\Vert\rho^{0}\Vert_{L^{2}}\big)+\frac{\varepsilon}{2}\\
&\lesssim& 2^{\frac{5}{2}N}\vert t-t^{\prime} \vert \big(\Phi_{5}(t)(1+t)+\Vert\rho^{0}\Vert_{L^{2}}\big)+\frac{\varepsilon}{2}. 
\end{eqnarray*}
It is enough then to choose $\vert t-t^{\prime}\vert<\beta$ such that
$$2^{\frac{5}{2}N}\vert t-t^{\prime}\vert\big((1+t)\Phi_{5}(t)+\Vert\rho^{0}\Vert_{L^{2}}\big)<\frac{\varepsilon}{2}.$$
Finally we obtain
$$\Vert v(t)-v(t^{\prime})\Vert_{B^{\frac{5}{2}}_{2,1}}\lesssim \varepsilon.$$
This proves the continuity in time of the velocity.\\
Let us now prove the continuity in time for the density. We first prove it in the Besov spaces $B^{\frac{1}{2}}_{2,1}.$ Similarly to the velocity, we write since $\rho \in \widetilde{L}^{\infty}_{T}B_{2,1}^{\frac{1}{2}}$ that for every $t,t^{\prime} \in \RR_+,$
\begin{eqnarray}\label{s18}
\nonumber \Vert \rho(t)-\rho(t^{\prime})\Vert_{B^{\frac{1}{2}}_{2,1}}&\le& \sum_{j\le N}2^{\frac{j}{2}}\Vert\Delta_{j}\rho(t)-\Delta_{j}\rho(t^{\prime})\Vert_{L^2}+2 \sum_{j>N}2^{\frac{j}{2}}\Vert\Delta_{j}\rho\Vert_{L_{T}^{\infty}L^2}\\
&\lesssim& \sum_{j\le N}2^{\frac{j}{2}}\Vert\Delta_{j}\rho(t)-\Delta_{j}\rho(t^\prime)\Vert_{L^{2}}+2 \frac{\varepsilon}{4}.
\end{eqnarray}
From the equation for $\rho$, we have 
$$\Delta_{j}\rho(t)=\Delta_{j}\rho^{0}(x)+\int_{0}^{t}\Delta\Delta_{j}\rho(\tau)d\tau-\int_{0}^{t}\Delta_{j}(v\cdot\nabla\rho)(\tau)d\tau$$
and similar for $\Delta_{j}\rho(t^{\prime}).$
Thus it follows from H\"older and Bernstein inequalities with $t^{\prime}\le t$ that, 
\begin{eqnarray}\label{s19}
\nonumber \Vert\Delta_{j}\rho(t)-\Delta_{j}\rho(t^{\prime})\Vert_{L^{2}}&\le& \int_{t^\prime}^{t}\Vert\Delta\Delta_{j}\rho(\tau)\Vert_{L^2}d\tau+\int_{t^\prime}^{t}\Vert\Delta_{j}(v\cdot\nabla\rho)(\tau)\Vert_{L^2}d\tau\\
\nonumber &\lesssim& 2^{2j}\int_{t^\prime}^{t}\Vert\Delta_{j}\rho(\tau)\Vert_{L^2}d\tau+2^{\frac{5}{2}j}\int_{t^\prime}^{t}\Vert\Delta_{j}(v\rho)(\tau)\Vert_{L^1}d\tau\\
&\lesssim& \vert t-t^{\prime}\vert 2^{2j}\Vert\rho^{0}\Vert_{L^{2}}+\vert t-t^{\prime}\vert 2^{\frac{5}{2}j}\Vert v \Vert_{L^{\infty}_{t}L^{2}}\Vert\rho\Vert_{L^{\infty}_{t}L^{2}},
\end{eqnarray}
where we have used in the last line Proposition \ref{prop a1}-(a)
Putting now \eqref{s19} into \eqref{s18} and using Proposition \ref{prop a1}, we find  
\begin{eqnarray*}
\Vert \rho(t)-\rho(t^{\prime})\Vert_{B^{\frac{1}{2}}_{2,1}}&\lesssim& \vert t-t^{\prime}\vert 2^{\frac{5}{2}N}\Vert\rho^{0}\Vert_{L^{2}}+\vert t-t^{\prime}\vert 2^{3N}\Vert v \Vert_{L^{\infty}_{t}L^{2}}\Vert\rho\Vert_{L^{\infty}_{t}L^{2}}+\frac{\varepsilon}{2}\\
&\lesssim& \vert t-t^{\prime}\vert \Vert\rho^{0}\Vert_{L^2}\big(2^{\frac{5}{2}N}+C_{0}(1+t)2^{3N}\big)+\frac{\varepsilon}{2}.
\end{eqnarray*}
We choose $\vert t-t^{\prime}\vert<\gamma$ such that,
$$\vert t-t^{\prime}\vert \Vert\rho^{0}\Vert_{L^2}\big(2^{\frac{5}{2}N}+C_{0}(1+t)2^{3N}\big)<\frac{\varepsilon}{2}.$$
Finally,
$$\Vert \rho(t)-\rho(t^{\prime})\Vert_{B^{\frac{1}{2}}_{2,1}}<\varepsilon.$$
We will now prove the continuity in time for the density in $L^{p}$ space. Denote $S(t):=e^{t\Delta}$ is the semigroup of convolution defined by $$S(t)\rho(x)=(K_{t}\ast\rho)(x),$$ where $K_{t}=\frac{1}{t^{\frac{3}{2}}}K(\frac{x}{t^{\frac{1}{2}}})$ and $\widehat{K}=e^{-\vert\xi\vert^2}.$ It is well-known that $K$ is nonnegative and $\Vert K \Vert_{L^{1}}=1.$ We set now $g:=-v\cdot\nabla\rho.$ From the equation of $\rho,$ we get
\begin{eqnarray*}
\rho(t,x)&=& (K_{t}\ast\rho^{0})(x)+\int_{0}^{t}(K_{t-\tau}\ast g)(\tau)d\tau\\
&=& S(t)\rho^{0}(x)+\int_{0}^{t}S(t-\tau)g(\tau)d\tau.
\end{eqnarray*}
Thus from Duhamel formula, we have for every $0\le t^{\prime} \le t \le T,$
\begin{eqnarray}\label{s20}
\nonumber \rho(t,x)-\rho(t^{\prime},x)&=& (S(t)-S(t^{\prime}))\rho^{0}(x)+\int_{t^{\prime}}^{t}S(t-\tau)g(\tau)d\tau\\
&+& \int_{0}^{t^{\prime}}(S(t-\tau)-S(t^{\prime}-\tau))g(\tau)d\tau.
\end{eqnarray}
The first term of the right-hand side converge to zero as $t$ goes to $t^\prime$ since the map $t \longmapsto S(t)\rho^{0}$ is contnuous from $[0,+\infty[$ into $L^p$ with $1\le p<\infty.$
For the second term, we use Young and H\"older inequalities with $0\le t^{\prime}\le t \le T,$
\begin{eqnarray*}
\Vert\int_{t^{\prime}}^{t} S(t-\tau)g(\tau)d\tau\Vert_{L^p}&\le& \int_{t^{\prime}}^{t}\Vert S(t-\tau)g(\tau)\Vert_{L^p}d\tau\\
&\le& \int_{t^{\prime}}^{t}\Vert g(\tau)\Vert_{L^p}d\tau\\
&\le& \int_{t^{\prime}}^{t}\Vert v\cdot\nabla\rho(\tau)\Vert_{L^p}d\tau\\
&\le& \int_{t^{\prime}}^{t}\Vert v(\tau) \Vert_{L^\infty}\Vert\nabla\rho(\tau)\Vert_{L^p}d\tau\\
&\le& \Vert v \Vert_{L_{T}^{\infty}L^\infty}\int_{t^{\prime}}^{t}\Vert\nabla\rho(\tau)\Vert_{L^p}d\tau.
\end{eqnarray*}
Now since,
\begin{eqnarray*}
\int_{t^{\prime}}^{t}\Vert\nabla\rho(\tau)\Vert_{L^p}d\tau&\le& \Big\vert \int_{0}^{t}\Vert\nabla\rho(\tau)\Vert_{L^{p}}d\tau-\int_{0}^{t^{\prime}}\Vert\nabla\rho(\tau)\Vert_{L^{p}} d\tau\Big\vert\\
&\le& \int_{0}^{t}\Vert\nabla\rho(\tau)\Vert_{L^{p}}d\tau+\int_{0}^{t^{\prime}}\Vert\nabla\rho(\tau)\Vert_{L^{p}}d\tau.
\end{eqnarray*}
Then to estimate $\displaystyle\int_{0}^{t}\Vert\nabla\rho(\tau)\Vert_{L^{p}}d\tau$ we use \eqref{0x} and Bernstein inequality, we obtain
\begin{eqnarray}\label{s21}
\nonumber \int_{0}^{t}\Vert\nabla\rho(\tau)\Vert_{L^p}d\tau &\le& \sum_{j\ge -1}\int_{0}^{t}\Vert\nabla\Delta_{j}\rho(\tau)\Vert_{L^p}d\tau\\
\nonumber &\lesssim& \sum_{j\ge -1}2^{j}\int_{0}^{t}\Vert\Delta_{j}\rho(\tau)\Vert_{L^p}d\tau\\
\nonumber &\lesssim& \sum_{j\ge -1}2^{-j}\Vert\Delta_{j}\rho^{0}\Vert_{L^p}\big(1+\int_{0}^{t}\Vert\nabla v(\tau)\Vert_{L^\infty}d\tau\big)\\
&\lesssim&\Vert\rho^{0}\Vert_{L^p}(1+\int_{0}^{t}\Vert\nabla v(\tau)\Vert_{L^\infty}d\tau).
\end{eqnarray}
Similarly for $\displaystyle\int_{0}^{t^{\prime}}\Vert\nabla\rho(\tau)\Vert_{L^{p}}d\tau$ we find
$\displaystyle\int_{0}^{t^\prime}\Vert\nabla\rho(\tau)\Vert_{L^{p}}d\tau\lesssim\Vert\rho^{0}\Vert_{L^p}(1+\int_{0}^{t^\prime}\Vert\nabla v(\tau)\Vert_{L^\infty}d\tau).$
Using now the embedding $B_{2,1}^{\frac{5}{2}}\hookrightarrow L^{\infty}$, we find,
\begin{eqnarray*}
\Vert\int_{t^{\prime}}^{t} S(t-\tau)g(\tau)d\tau\Vert_{L^p}&\le& \Vert v \Vert_{L_{T}^{\infty}L^\infty}\int_{t^{\prime}}^{t}\Vert\nabla\rho(\tau)\Vert_{L^p}d\tau\\
&\lesssim& \Vert v \Vert_{L_{T}^{\infty}B^{\frac{5}{2}}_{2,1}}\Vert\rho^{0}\Vert_{L^p}\Big(1+\int_{0}^{t}\Vert\nabla v(\tau)\Vert_{L^\infty}d\tau+\int_{0}^{t^\prime}\Vert\nabla v(\tau)\Vert_{L^\infty}d\tau\Big).
\end{eqnarray*}
Hence the continuity in time is a consequence of Propositions \ref{prop a6} and \ref{prop a7}.
For the last term of \eqref{s20}, we use the identity,
$$S(t-\tau)g(\tau)-S(t^{\prime}-\tau)g(\tau)=-\int_{t^{\prime}-\tau}^{t-\tau}S(t^{\prime\prime})\Delta g(\tau)dt^{\prime\prime}.$$
Thus we have,
\begin{eqnarray*}
\Vert S(t-\tau)g(\tau)-S(t^{\prime}-\tau)g(\tau)\Vert_{L^p}&\le& \int_{t^{\prime}-\tau}^{t-\tau}\Vert S(t^{\prime\prime})\Delta g(\tau)\Vert_{L^p}dt^{\prime\prime}\\
&\le& \int_{t^{\prime}-\tau}^{t-\tau}\Vert S(t^{\prime\prime})\Delta_{-1}\Delta g(\tau)\Vert_{L^p}dt^{\prime\prime}\\
&+& \sum_{j\ge 0}\int_{t^{\prime}-\tau}^{t-\tau}\Vert S(t^{\prime\prime})\Delta_{j}\Delta g(\tau)\Vert_{L^p}dt^{\prime\prime}\\
&\le& \int_{t^{\prime}-\tau}^{t-\tau}\Vert K_{t^{\prime\prime}}\Vert_{L^1}\Vert\Delta_{-1}\Delta g(\tau)\Vert_{L^p}dt^{\prime\prime}\\
&+& \sum_{j\ge 0}\int_{t^{\prime}-\tau}^{t-\tau}\Vert e^{t^{\prime\prime}\Delta}\Delta_{j}\Delta g(\tau)\Vert_{L^p}dt^{\prime\prime}\\
&\le& C (t-t^{\prime})\Vert g(\tau)\Vert_{L^p}+C\sum_{j\ge 0}2^{2j}\int_{t^{\prime}-\tau}^{t-\tau}\Vert e^{-t^{\prime\prime}\Delta}\Delta_{j} g(\tau)\Vert_{L^p}dt^{\prime\prime}\\
&\lesssim& (t-t^{\prime})\Vert v(\tau) \Vert_{L^\infty}\Vert\nabla\rho(\tau)\Vert_{L^p}\\
&+& \sum_{j\ge 0}2^{2j}\int_{t^{\prime}-\tau}^{t-\tau}e^{-ct^{\prime\prime}2^{2j}}\Vert\Delta_{j} g(\tau)\Vert_{L^p}dt^{\prime\prime}\\
&\lesssim& (t-t^{\prime})\Vert v(\tau) \Vert_{L^\infty}\Vert\nabla\rho(\tau)\Vert_{L^p}\\
&+& \sum_{j\ge 0}2^{2j}\Vert\Delta_{j} g(\tau) \Vert_{L^p}\int_{t^{\prime}-\tau}^{t-\tau}e^{-ct^{\prime\prime}2^{2j}}dt^{\prime\prime}\\
&\lesssim& (t-t^{\prime})\Vert v(\tau) \Vert_{L^\infty}\Vert\nabla\rho(\tau)\Vert_{L^p}\\
&+& \sum_{j\ge 0} (1-e^{-c(t-t^{\prime})2^{2j}})\Vert\Delta_{j} g(\tau)\Vert_{L^p}.
\end{eqnarray*}
Where we have used Bernstein inequality and the following inequality proved in \cite{che99}: there exists $c, C>0$ such that for every $t>0, j\in\NN$ and $g\in L^p$, $1\le p\le\infty,$ 
$$\Vert e^{-t\Delta}\Delta_{j}g\Vert_{L^{p}}\le C e^{-ct2^{2j}}\Vert\Delta_{j}g\Vert_{L^{p}}.$$
Hence, it follows that,
\begin{eqnarray*}
\int_{0}^{t^{\prime}}\Vert (S(t-\tau)-S(t^{\prime}-\tau))g(\tau)\Vert_{L^{p}}d\tau&\le& C(t-t^{\prime})\Vert v \Vert_{L_{T}^{\infty}L^{\infty}}\Vert\nabla\rho\Vert_{L_{T}^{1}L^{p}}\\
&+& C\sum_{j\ge 0}\big(1-e^{-c(t-t^{\prime}) 2^{2j}}\big)\Vert\Delta_{j} g \Vert_{L_{T}^{1}L^{p}}.
\end{eqnarray*}
Since $1-e^{-x}\le x^{\beta}$ with $x\ge 0$ and $0\le\beta\le 1$ then we have for $0\le j$, $$1-e^{-c(t-t^{\prime}) 2^{2j}}\le c(t-t^{\prime})^{\frac{1}{4}} 2^{\frac{j}{2}}.$$
This gives
\begin{eqnarray*}
\int_{0}^{t^{\prime}}\Vert (S(t-\tau)-S(t^{\prime}-\tau))g(\tau)\Vert_{L^{p}}d\tau\lesssim (t-t^{\prime})\Vert v \Vert_{L_{T}^{\infty}L^{\infty}}\Vert\nabla\rho\Vert_{L_{T}^{1}L^{p}}+(t-t^{\prime})^{\frac{1}{4}}\Vert g \Vert_{L_{T}^{1}B_{p,1}^{\frac{1}{2}}}.
\end{eqnarray*}
Using Bony's decomposition, we can easily prove that,
\begin{eqnarray*}
\Vert g \Vert_{B_{p,1}^{\frac{1}{2}}}&\lesssim& \Vert v\cdot\nabla\rho\Vert_{B_{p,1}^{\frac{1}{2}}}\\
&\lesssim& \Vert v \Vert_{B_{\infty,1}^{1}}\Vert\rho\Vert_{B_{p,1}^{\frac{3}{2}}}.
\end{eqnarray*}
Therefore,
$$\Vert g \Vert_{L_{T}^{1}B_{p,1}^{\frac{1}{2}}}\lesssim \Vert v \Vert_{L_{T}^{\infty}B_{\infty,1}^{1}}\Vert\rho\Vert_{L_{T}^{1}B_{p,1}^{\frac{3}{2}}}.$$
To estimate $\Vert\rho\Vert_{L_{T}^{1}B_{p,1}^{\frac{3}{2}}}$ we use as before \eqref{0x}, Proposition \ref{prop a1}-a) and Bernstein inequality and get, 
\begin{eqnarray}\label{s22}
\nonumber \Vert\rho\Vert_{L_{T}^{1}B_{p,1}^{\frac{3}{2}}}&=&C\Vert\Delta_{-1}\rho\Vert_{L_{T}^{1}L^{p}}+\sum_{j\ge 0}2^{\frac{3}{2}j}\Vert\Delta_{j}\rho\Vert_{L_{T}^{1}L^{p}}\\
\nonumber &\lesssim& \Vert\rho^{0}\Vert_{L^{p}}T+\sum_{j\ge 0}2^{-\frac{1}{2}j}\Vert\Delta_{j}\rho^{0}\Vert_{L^{p}}(1+\int_{0}^{T}\Vert\nabla v(\tau)\Vert_{L^{\infty}}d\tau)\\
&\lesssim& \Vert\rho^{0}\Vert_{L^{p}}(1+T+\int_{0}^{T}\Vert\nabla v(\tau)\Vert_{L^{\infty}}d\tau).
\end{eqnarray}
Using the embedding $B_{\infty,1}^{1}\hookrightarrow L^{\infty},$ \eqref{s21} and \eqref{s22}, we get
\begin{eqnarray*}
\int_{0}^{t^{\prime}}\Vert (S(t-\tau)-S(t^{\prime}-\tau))g(\tau)\Vert_{L^p}d\tau&\lesssim& (t-t^{\prime})\Vert v \Vert_{L_{T}^{\infty}L^{\infty}}\Vert\nabla\rho\Vert_{L_{T}^{1}L^{p}}\\
&+& (t-t^{\prime})^{\frac{1}{4}}\Vert v \Vert_{L_{T}^{\infty}B_{\infty,1}^{1}}\Vert\rho\Vert_{L_{T}^{1}B_{p,1}^{\frac{3}{2}}}\\
&\lesssim& \Big((t-t^{\prime})\Vert\nabla\rho\Vert_{L_{T}^{1}L^{p}}+(t-t^{\prime})^{\frac{1}{4}}\Vert\rho\Vert_{L_{T}^{1}B_{p,1}^{\frac{3}{2}}}\Big)\Vert v \Vert_{L_{T}^{\infty}B_{\infty,1}^{1}}\\
&\lesssim& \Big((t-t^{\prime})+(t-t^{\prime})^{\frac{1}{4}}\Big)\Vert\rho^{0}\Vert_{L^p}\Big(1+T\\
&+& \int_{0}^{T}\Vert\nabla v(\tau)\Vert_{L^{\infty}}d\tau\Big)\Vert v \Vert_{L_{T}^{\infty}B^{1}_{\infty,1}}.
\end{eqnarray*}
The norms in the right-hand side are finite according to \eqref{s5} and Proposition \ref{prop a6}.\\
This achieves the proof of the continuity in time of the density $\rho$ in $L^p$ space.\\
As we have proved the continuity in time of the density $\rho$ in $L^p$ space, we can prove in the same way the continuity in time of the second moment of the density in $L^2$ space. For this we will estimate only one term and the other terms will be exactly the same estimate. Recall now from \eqref{66}, the second moment of $\rho$ satisfies the following equation, with $G:=\vert x_{h}\vert^{2}\rho$ and $f:= x_{h}\rho,$
$$\partial_{t}G-\Delta G=-v\cdot\nabla G+2v^{h}f-2\nabla_{h}\rho-4\textnormal{div}_{h}f:=F.$$
Then we can easily see with $0\le t^{\prime}\le t\le T$ that,
\begin{eqnarray*}
G(t,x)-G(t^\prime,x)=(S(t)-S(t^\prime))G^{0}(x)+\int_{t^\prime}^{t}S(t-\tau)F(\tau)d\tau+\int_{0}^{t^\prime}(S(t-\tau)-S(t^{\prime}-\tau))F(\tau)d\tau.
\end{eqnarray*}
We will use to control the second and the third terms in $L^2$ space an integration by parts and by using H\"older inequality. We will be going to work with $\int_{t^\prime}^{t}\Vert S(t-\tau)v\cdot\nabla G(\tau)\Vert_{L^2}d\tau.$ First we write by an integration by parts:
\begin{eqnarray*}
S(t-\tau)v\cdot\nabla G &=& S(t-\tau)\textnormal{div}(v\cdot G)\\
&=& \sum_{i,j}K_{t-\tau}\ast\partial_{i}(v^{i}\,G^{j})\\
&=& \sum_{i,j}\partial_{i}K_{t-\tau}\ast(v^{i}\,G^{j}).
\end{eqnarray*}
Thus by convolution inequality, we have
\begin{eqnarray*}
\Vert S(t-\tau)v\cdot\nabla G \Vert_{L^2}&\le& \sum_{i,j}\Vert\partial_{i}K_{t-\tau}\ast(v^{i}\,G^{j})\Vert_{L^2}\\
&\le& \sum_{i,j}\Vert\partial_{i}K_{t-\tau}\Vert_{L^1}\Vert v^{i}\,G^{j}\Vert_{L^2}.
\end{eqnarray*}
Now since $K_{t-\tau}=\frac{1}{(t-\tau)^{\frac{3}{2}}}K(\frac{x}{(t-\tau)^{\frac{1}{2}}}),$ then
$$\vert\partial_{i}K_{t-\tau}\vert\le\frac{1}{(t-\tau)^{\frac{3}{2}}}\frac{1}{(t-\tau)^{\frac{1}{2}}}\Big\vert\partial_{i}K(\frac{x}{(t-\tau)^{\frac{1}{2}}})\Big\vert.$$
Thus
\begin{eqnarray*}
\Vert\partial_{i}K_{t-\tau}\Vert_{L^1}&\le&  \frac{1}{(t-\tau)^{2}}\Vert\partial_{i}K(\frac{x}{(t-\tau)^{\frac{1}{2}}})\Vert_{L^1}\\
&\le& \frac{1}{(t-\tau)^{2}}(t-\tau)^{\frac{3}{2}}\Vert\partial_{i}K\Vert_{L^1}\\
&\le& \frac{1}{(t-\tau)^{\frac{1}{2}}}\Vert\partial_{i}K\Vert_{L^1}.
\end{eqnarray*}
This gives that,
\begin{eqnarray*}
\Vert S(t-\tau)v\cdot\nabla G \Vert_{L^2}\lesssim\Vert v\,G \Vert_{L^2}\frac{1}{(t-\tau)^{\frac{1}{2}}}.
\end{eqnarray*}
Therefore by using H\"older's inequality, we obtain that,
\begin{eqnarray*}
\int_{t^\prime}^{t}\Vert S(t-\tau)v\cdot\nabla G(\tau)\Vert_{L^2}d\tau&\le& \int_{t^\prime}^{t}\frac{1}{(t-\tau)^{\frac{1}{2}}}\Vert v(\tau)\Vert_{L^\infty}\Vert G(\tau)\Vert_{L^2}d\tau\\
&\lesssim& (t-t^\prime)^{\frac{1}{2}}\Vert v \Vert_{L_{T}^{\infty}L^\infty}\Vert G \Vert_{L_{T}^{\infty}L^2}\\
&\lesssim& (t-t^\prime)^{\frac{1}{2}}\Vert v \Vert_{L_{T}^{\infty}B^{1}_{\infty,1}}\Vert G \Vert_{L_{T}^{\infty}L^2}.
\end{eqnarray*} 
The continuity in time follows from \eqref{s5} and Proposition \ref{p1}-b).

\end{document}